\newtheorem{pro}{Proposition}
\newtheorem{theorem}{Theorem}
\newtheorem{lemma}[theorem]{Lemma}
\newcommand{\bea}{\begin{eqnarray}}
\newcommand{\eea}{\end{eqnarray}}
\newcommand{\beas}{\begin{eqnarray*}}
\newcommand{\eeas}{\end{eqnarray*}}
\def\[{\left[}
\def\]{\right]}
\def\<{\langle}
\def\>{\rangle}
\def\({\left(}
\def\){\right)}
\def\r{\rho}
\def\N{{\mathbb N}}
\def\R{{\mathbb R}}
\def\Z{{\mathbb Z}}
\def\T{{\mathbb T}}
\begin{document}

\title{\textbf{ Discrete stability estimates for the pressureless Euler-Poisson-Boltzmann equations in the Quasi-Neutral limit}}
	\date{}
\author{Mehdi Badsi\thanks{Nantes Universit\'e, Inria centre de l'universit\'e de Rennes (Mingus team) and Laboratoire de Math\'ematiques Jean Leray UMR CNRS 6310, 2 Chemin de la Houssini\`ere BP 92208, 44322 Nantes Cedex 3, France. 
  (mehdi.badsi@univ-nantes.fr).}
\and Nicolas Crouseilles\footnotemark[3]\thanks{Univ Rennes, Inria centre de l'universit\'e de Rennes (Mingus team) and IRMAR UMR CNRS 6625, F-35042 Rennes, France \& ENS Rennes, France. (nicolas.crouseilles@inria.fr).}}

	\maketitle

\begin{abstract} We propose and study a fully implicit finite volume scheme for the pressureless Euler-Poisson-Boltzmann equations on the one dimensional torus. Especially, we design a consistent and dissipative  discretization of the force term which yields an unconditional energy decay. In addition, we establish a discrete analogue of the modulated energy estimate around constant states with a small velocity. Numerical experiments are carried to illustrate our theoretical results and to assess the accuracy of our scheme. A test case of the literature is also illustrated.
\end{abstract}

\paragraph{keywords:}
pressureless Euler-Poisson-Boltzmann, finite-volume, implicit-scheme, stability estimates, modulated energy, plasma, quasi-neutral limit

\paragraph{MSCcodes:} 65M12, 82D10, 65J15.

    \section{Introduction}
     We consider a simplified model of a uni-dimensional plasma in which ions are cold and electrons have reached a thermodynamical equilibrium. The macroscopic density of the electrons is thus assumed to obey the Maxwell-Boltzmann law \cite{lieberman}. We model this plasma using a fluid approach where at time $t \in \R^{+}$ and at position $x \in \T = \R / \Z$, the unknowns are $\rho_{\varepsilon}(t,x) \geq 0$,  $u_{\varepsilon}(t,x) \in \R$, $-\phi_{\varepsilon}(t,x) \in \R$ which stand respectively for the ions density, the ions mean velocity and the electrostatic potential. In dimensionless unit, they are assumed to satisfy the pressureless Euler-Poisson-Boltzmann equations posed on $(0,T] \times \T:$
	\begin{equation} \label{EPB}
\begin{cases}
\: \partial_t \rho_{\varepsilon} + \partial_x(\rho_{\varepsilon} u_{\varepsilon} ) =  0,  \\
   \:  \partial_{t}( \rho_{\varepsilon} u_{\varepsilon} ) + \partial_{x}( \rho_{\varepsilon} u_{\varepsilon}^2 ) = \rho_{\varepsilon} \partial_{x} \phi_{\varepsilon}, \\
\: \varepsilon^2 \partial_{xx}\phi_{\varepsilon} + e^{-\phi_{\varepsilon}}  = \rho_{\varepsilon}, 
\end{cases}
\end{equation}
where $T >0$ is a time horizon and $\varepsilon > 0$ is a physical parameter called the Debye length.
The system \eqref{EPB} is supplemented with an initial condition 
\begin{align}
    &\rho_{\varepsilon}(0,x) = \rho_{\varepsilon}^{\textnormal{ini}}(x), \quad  u_{\varepsilon}(0,x) = u_{\varepsilon}^{\textnormal{ini}}(x), 
\end{align}
where $\rho_{\varepsilon}^{\textnormal{ini}} : x \in \T \longrightarrow \R^{+}$ and $u_{\varepsilon}^{\textnormal{ini}} : x \in \T \longrightarrow \R$ are given functions.
The existence of local in time strong solutions to \eqref{EPB} has been rigorously established in \cite{Lannes2013} in the case of the whole space. The proof can be adapted to the case of the torus. More precisely, provided $(\rho_{\varepsilon}^{\textnormal{ini}},u_{\varepsilon}^{\textnormal{ini}}) \in H^{s}(\T) \times H^{s+1}(\T)$ with $s > \frac{1}{2}$  and $\underset{ x \in \T}{\textnormal{ess}} \inf \rho_{\varepsilon}^{\textnormal{ini}} > 0$ there exists $T_{\varepsilon} > 0$ and a unique strong solution $(\rho_{\varepsilon},u_{\varepsilon},\phi_{\varepsilon})$ with the regularity $(\rho_{\varepsilon},u_{\varepsilon}) \in \mathrm{C}^{0}\big( [0,T_{\varepsilon}];H^{s}(\T) \times H^{s+1}(\T) \big) \cap \mathrm{C}^{1}([0,T_{\varepsilon}]; H^{s-1}(\T) \times H^{s}(\T) )$ and $\phi_{\varepsilon} \in \mathrm{C}^{0}([0,T_{\varepsilon}];H^{s+2}(\T)) \cap \mathrm{C}^{1}([0,T_{\varepsilon}];H^{s+1}(\T)) $ and such that  $\underset{ x \in \T}{\textnormal{ess}} \inf \rho_{\varepsilon}(t,x) > 0$ for $t \in [0,T_{\varepsilon}]$. The study of the quasi-neutral limit $\varepsilon \rightarrow 0$ for the strong solutions has been carried by Pu and Guo in \cite{pu,puguo}. Note that in the case of the Euler-Poisson-Boltzmann equations, the quasi-neutral limit has been studied by Cordier and Grenier in \cite{grenier}.  When $\varepsilon \rightarrow 0$
we formally expect the solution $(\rho_{\varepsilon},u_{\varepsilon},\phi_{\varepsilon})$ to converge towards a solution to the isothermal Euler equations:
\begin{equation} \label{EPB0}
\begin{cases}
\partial_t \rho_0 + \partial_x(\rho_0 u_0 ) =0,  \\
\partial_t (\rho_{0}u_0) + \partial_x\Big( \rho_{0} u_0^2  +e^{-\phi_{0}} \Big)= 0,\\
\rho_{0} = e^{-\phi_{0}}.
\end{cases}
\end{equation}
The system \eqref{EPB0} is hyperbolic symmetrizable. Thus, the existence of local in time strong solutions is an application of the Kato-Lax-Friedrichs theory for symmetric hyperbolic systems \cite{Kato,Benzoni-Serre}. One difficulty in the study of the quasi-neutral limit (see \cite{puguo}) consists in establishing thanks to high order energy estimates that the local time of existence $T_{\varepsilon}$ for \eqref{EPB} does not shrink when $\varepsilon \rightarrow 0$ (that is $\underset{ \varepsilon \rightarrow 0}\liminf \: T_{\varepsilon} > 0$) so that there exists a time $T > 0$ on which both system \eqref{EPB} and \eqref{EPB0} live. 

As far as the numerical approximation of solutions to  \eqref{EPB} is concerned, several works propose finite-difference or finite-volume schemes for plasma fluid models, which are linearly stable in the asymptotic $\varepsilon \rightarrow 0$ and formally converge as $\varepsilon \rightarrow 0$  see for instance \cite{degond_b,alvarez,degond_ap}. Precisely on the pressureless Euler-Poisson-Boltzmann equations \eqref{EPB}, a recent work is \cite{ arun} where the authors study a semi-implicit finite volume scheme based on the so-called staggered discretization studied  in \cite{herbin,eymard} for the compressible Euler equations and the compressible Stokes equations. The authors prove a discrete energy estimate for their scheme using a stabilization term. This stabilization term formally vanishes when the time step of the scheme tends to zero but is formally inconsistent when the time step is fixed and the mesh size tends to zero.  A proof of convergence (up to a subsequence) of the discrete scheme in the limit $\varepsilon \rightarrow 0$ is given thanks to a finite dimensional argument which, in fine, boils down to an application of the Bolzano-Weierstrass theorem. The question of the convergence rate when $\varepsilon \rightarrow 0$ and its dependance  with respect to the dimension of the problem is so far an open problem.

Our main focus in this work is twofold: 
firstly, we prove discrete stability estimates for a fully-implicit finite-volume scheme for \eqref{EPB} which as a by product enables us to prove existence for the scheme. Our formalism also uses a staggered discretization  which enables us to establish a kinetic energy balance somehow similar to Lemma 3.1 in \cite{herbin}. What is new with respect to the existing literature, is a consistent space discretization of the force term in the momentum equation of \eqref{EPB} which is compatible with the discrete continuity equation and leads to an unconditional energy decay. Secondly, we propose a discrete analogue of the modulated energy approach \cite{GVHKR, ingrid, pu, puguo} to establish non linear stability for the constant solutions of the system \eqref{EPB}. Especially, it provides a discrete quantitative stability estimate even when $\varepsilon \rightarrow 0$ for well-prepared initial data. The consistency analysis (at fixed $\varepsilon$) of our scheme based on standard assumptions of the litterature is to a certain extent classical, and is as a matter of fact, omitted.

The plan of this work is as follows. In Section \eqref{sec:conservation} we establish the conservation properties of \eqref{EPB}. In Section \eqref{sec:modulated_energy} we recall the modulated energy estimates. Section \eqref{sec:discrete_scheme} defines the numerical scheme to approximate the solutions to \eqref{EPB}. We establish the discrete energy estimates in \eqref{sec:discrete_energy_estimate} and prove the existence of the scheme. Then in section \eqref{sec:discrete_modulated_energy}, we establish the stability of  constant states with a small velocity using the discrete modulated energy. Eventually, we illustrate our results and discuss the numerical accuracy of our scheme in Section \eqref{sec:numerical_experiment}.

\subsection{Conservation properties}\label{sec:conservation}
In the sequel we consider strong solutions to \eqref{EPB} and \eqref{EPB0} which are both defined on $[0,T]$ and such that their respective density is a positive function on $[0,T] \times \T.$
We first establish the conservations of the system \eqref{EPB}.
\begin{pro}
\label{prop_nrj}{(Conservations)} Let $(\rho_{\varepsilon},u_{\varepsilon},\phi_{\varepsilon})$ be a strong solution  to \eqref{EPB} on $[0,T]$ with $\rho_{\varepsilon} > 0.$  Then we have for $t \in [0,T]:$
\begin{align}
    &\int_{\T} \rho_{\varepsilon}(t,x) d x = \int_{\T} \rho_{\varepsilon}(0,x) d x , \label{mass_con} \\
    &\int_{\T} (\rho_{\varepsilon} u_{\varepsilon})(t,x) d x = \int_{\T} (\rho_{\varepsilon} u_{\varepsilon})(0,x) d x, \label{mom_con}\\
    &{\cal H}(t) = {\cal H}(0) \label{nrj_cons},
\end{align}
where $\mathcal{H}$ is the total energy given by
\begin{equation}
\label{def:H}
{\cal H}(t) =  \int_{\T} \rho_{\varepsilon} \frac{u_{\varepsilon}^2}{2} dx + \int_{\T} \frac{\varepsilon^2}{2} \big |\partial_x \phi_{\varepsilon} \big| ^2 dx  + \int_{\T} h(\phi_{\varepsilon})  dx, \: t \in [0,T],
\end{equation}
where 
\begin{align}\label{def : h}
   h:s \in \R \longmapsto -(s+1)e^{-s}.
\end{align} 
\end{pro}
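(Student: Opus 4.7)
The plan is to treat the three identities in order of increasing difficulty: mass, then momentum, then energy, each obtained by a test-function argument combined with the Poisson--Boltzmann constraint.

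\textbf{Mass conservation.} I would simply integrate the continuity equation over the torus $\T$. The divergence term $\partial_x(\rho_\eps u_\eps)$ contributes nothing by periodicity, so $\frac{d}{dt}\int_\T \rho_\eps\, dx = 0$, which yields \eqref{mass_con}.

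\textbf{Momentum conservation.} Integrating the momentum equation over $\T$, the flux $\partial_x(\rho_\eps u_\eps^2)$ again disappears by periodicity, so it remains to show that the force term vanishes in mean, i.e.\ $\int_\T \rho_\eps \partial_x\phi_\eps\, dx = 0$. I would substitute $\rho_\eps = \eps^2\partial_{xx}\phi_\eps + e^{-\phi_\eps}$ from the Poisson--Boltzmann relation and recognize each piece as a total derivative: $\eps^2\partial_{xx}\phi_\eps\,\partial_x\phi_\eps = \partial_x\bigl(\tfrac{\eps^2}{2}(\partial_x\phi_\eps)^2\bigr)$ and $e^{-\phi_\eps}\partial_x\phi_\eps = -\partial_x(e^{-\phi_\eps})$, both of which integrate to zero on $\T$. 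This gives \eqref{mom_con}.

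\textbf{Energy conservation.} This is the delicate step. I would first derive the pointwise kinetic energy balance
\begin{equation*}
\partial_t\!\Bigl(\rho_\eps \frac{u_\eps^2}{2}\Bigr) + \partial_x\!\Bigl(\rho_\eps \frac{u_\eps^3}{2}\Bigr) = \rho_\eps u_\eps\, \partial_x\phi_\eps,
\end{equation*}
by combining the continuity and momentum equations (using $\rho_\eps>0$ to extract the non-conservative form $\partial_t u_\eps + u_\eps\partial_x u_\eps = \partial_x\phi_\eps$ and then multiplying by $\rho_\eps u_\eps$). Integrating over $\T$ eliminates the flux, leaving $\frac{d}{dt}\int_\T \rho_\eps u_\eps^2/2\,dx = \int_\T \rho_\eps u_\eps\,\partial_x\phi_\eps\,dx$.

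The main work is then to identify this right-hand side as the time derivative of the potential and internal energy with a minus sign. I would integrate by parts to get $\int_\T \rho_\eps u_\eps\,\partial_x\phi_\eps\,dx = -\int_\T \partial_x(\rho_\eps u_\eps)\,\phi_\eps\,dx = \int_\T \partial_t\rho_\eps\,\phi_\eps\,dx$ by continuity. Differentiating the Poisson--Boltzmann relation in time yields $\partial_t\rho_\eps = \eps^2\partial_{xx}\partial_t\phi_\eps - e^{-\phi_\eps}\partial_t\phi_\eps$, so
\begin{equation*}
\int_\T \rho_\eps u_\eps\,\partial_x\phi_\eps\,dx = \eps^2\!\int_\T \partial_{xx}\partial_t\phi_\eps\,\phi_\eps\,dx - \int_\T \phi_\eps e^{-\phi_\eps}\partial_t\phi_\eps\,dx.
\end{equation*}
A spatial integration by parts on the first term gives $-\tfrac{\eps^2}{2}\frac{d}{dt}\int_\T (\partial_x\phi_\eps)^2\,dx$; for the second term I would use the identity $h'(s)=s e^{-s}$ obtained by direct differentiation of $h(s)=-(s+1)e^{-s}$, which turns it into $-\frac{d}{dt}\int_\T h(\phi_\eps)\,dx$. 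Adding everything produces $\frac{d}{dt}\mathcal{H}=0$, which is \eqref{nrj_cons}. The only conceptual obstacle is choosing this particular $h$ so that the Boltzmann electron contribution to the energy is in perfect duality with $e^{-\phi_\eps}$ in the Poisson--Boltzmann constraint; the calculation itself is then a direct chain of integrations by parts made legal by the smoothness assumed on the strong solution.
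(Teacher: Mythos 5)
Your proposal is correct and follows essentially the same route as the paper: integration over $\T$ for mass, substitution of the Poisson--Boltzmann relation to write the force as a total $x$-derivative for momentum, and for energy the same chain (kinetic energy balance, integration by parts against $\phi_\varepsilon$, time-differentiation of the Poisson equation, and the identity $h'(s)=se^{-s}$). No substantive differences to report.
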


\begin{proof} The conservation of mass \eqref{mass_con} is readily obtained by integration in space of the continuity equation using the periodicity. As for the total momentum conservation \eqref{mom_con}, we integrate in space the momentum equation and  use the periodicity. It yields using the Poisson equation 
$$
\int_{\T} \partial_{t} (\rho_{\varepsilon} u_{\varepsilon})(t,x) d x = \int_{\T} \rho_{\varepsilon} \partial_{x} \phi_{\varepsilon} = \int_{\T} ( \varepsilon^2 \partial_{xx} \phi_{\varepsilon} + e^{-\phi_{\varepsilon}} ) \partial_{x} \phi_{\varepsilon} dx = \int_{\T}  \partial_{x} \Big( \frac{\varepsilon^2}{2} \vert \partial_{x} \phi_{\varepsilon} \vert^2  - e^{-\phi_{\varepsilon}} \Big) dx = 0.
$$
So we get $\displaystyle \frac{d}{dt} \int_{\T} (\rho_{\varepsilon} u_{\varepsilon})(t,x) dx =0$ for $t \in [0,T]$ and thus \eqref{mom_con}.
We prove the energy conservation \eqref{nrj_cons}.
We multiply the momentum equation by $u_{\varepsilon}$ to get 
$
u_{\varepsilon} \cdot \big( \partial_{t} (\rho_{\varepsilon} u_{\varepsilon}) + \partial_{x}(\rho_{\varepsilon} u_{\varepsilon}^2) \big)  = \rho_{\varepsilon} u_{\varepsilon} \cdot \partial_{x} \phi_{\varepsilon}.
$
Then we re-write the first term as a total derivative plus a residual term. We have,
\begin{align}\label{toto}
\partial_{t}( \rho_{\varepsilon} u_{\varepsilon}^2) + \partial_{x}\big( \rho_{\varepsilon} u_{\varepsilon}^{3} \big)
= \rho_{\varepsilon} u_{\varepsilon}  \big( \partial_{t}u_{\varepsilon} +  \partial_{x}\frac{u_{\varepsilon}^2}{2} \big) + \rho_{\varepsilon} u_{\varepsilon}  \partial_{x} \phi_{\varepsilon}.
\end{align}
Thanks to the momentum and the continuity equation, we have
$ \partial_{t}u_{\varepsilon} +  \partial_{x}\frac{u_{\varepsilon}^2}{2} =  \partial_{x} \phi_{\varepsilon}. 
$
Plugging this relation in \eqref{toto} we get
\begin{align} \label{booba}
\partial_{t}\big(\rho_{\varepsilon} \frac{u_{\varepsilon}^2}{2} \big) + \partial_{x}\big( \rho_{\varepsilon} \frac{u_{\varepsilon}^{3}}{2} \big) 
= \rho_{\varepsilon} u_{\varepsilon}  \partial_{x} \phi_{\varepsilon}.
\end{align}
We then integrate in space \eqref{booba} and use the periodicity to get
\begin{align} \label{tata}
    \frac{d}{dt} \int_{\T} \rho_{\varepsilon} \frac{u_{\varepsilon}^2}{2}
    = \int_{\T} \rho_{\varepsilon} u_{\varepsilon}  \partial_{x} \phi_{\varepsilon} dx = -\int_{\T} \partial_{x} (\rho_{\varepsilon} u_{\varepsilon}) \phi_{\varepsilon} dx = \int_{\T} (\partial_{t} \rho_{\varepsilon}) \phi_{\varepsilon} dx
 \end{align}
where we used the continuity equation for the last equality.
Besides, the Poisson equation gives 
 \begin{align}
 \int_{\T} (\partial_{t} \rho_{\varepsilon}) \phi_{\varepsilon} dx = \int_{\T}\Big[ \varepsilon^2 \partial_{t} (\partial_{xx} \phi_{\varepsilon})\phi_{\varepsilon}  + \partial_{t}(e^{-\phi_{\varepsilon}}) \phi_{\varepsilon} \Big]dx = \int_{\T} \Big[\varepsilon^2 \partial_{x}( \partial_{t} \partial_{x} \phi_{\varepsilon}) \phi_{\varepsilon}  - (\partial_{t} \phi_{\varepsilon}) e^{-\phi_{\varepsilon}} \phi_{\varepsilon} \Big]dx.
 \end{align}
 Using an integration by parts for the first term and the definition of the function $h$ we eventually obtain
 \begin{align}\label{titi}
     \int_{\T} (\partial_{t} \rho_{\varepsilon}) \phi_{\varepsilon} dx = -\frac{d}{dt} \int_{\T} \frac{\varepsilon^2}{2} \vert \partial_{x} \phi_{\varepsilon} \vert^2 dx -\frac{d}{dt}\int_{\T} h(\phi_{\varepsilon}) dx.
 \end{align}
 Gathering \eqref{titi} with \eqref{tata} yields $\frac{d}{dt} {\cal H}(t) = 0$ for $t \in [0,T]$ and thus \eqref{nrj_cons}.
\end{proof}

\subsection{The modulated energy estimate} \label{sec:modulated_energy}

Following \cite{Han-Kwan01082011}, for a strong solution $(\rho_{\varepsilon},u_{\varepsilon},\phi_{\varepsilon})$ to \eqref{EPB} and a strong solution $(\rho_{0},u_{0},\phi_{0})$ to \eqref{EPB0} both defined on $[0,T]$, we define the modulated energy around $(\rho_{0},u_{0},\phi_{0})$ at time $t \in [0,T]$ by
\begin{equation} 
\label{def:E}
\mathcal{E}(t) := \int_{\T} \rho_{\varepsilon} \frac{(u_{\varepsilon}-u_{0})^2}{2} dx + \int_{\T} \frac{\varepsilon^2}{2} \vert \partial_{x} \phi_{\varepsilon} \vert^2 dx 
+ \int_{\T} \Big[\tilde{h}(e^{-\phi_{\varepsilon}}) -\Big( \tilde{h}(\rho_{0}) + \tilde{h}'(\rho_{0})(e^{-\phi_{\varepsilon}} - \rho_{0}) \Big)\Big] dx,
\end{equation}
where  $\tilde{h} : \R^{+}_{\star} \rightarrow \R$ is the Boltzmann entropy function given by
\begin{equation} \label{def:h_tilde}
    \forall \psi > 0, \: \tilde{h}(\psi) := h(-\log(\psi)) = \psi \log(\psi) - \psi.
\end{equation}
We point out that $\mathcal{E}$ is a non negative functional since it is the sum of three non negative functionals. The fact that the last term is non negative is due to the fact that the function $\tilde{h}$ is convex. Provided $\|\rho_{\varepsilon} \|_{L^{\infty}_{t,x}}$ and $\| \frac{1}{\rho_{\varepsilon}} \|_{L^{\infty}_{t,x}}$ are uniformly bounded in $\varepsilon$, we have $\mathcal{E}(t) \gtrsim \|u_{\varepsilon}(t)-u_{0}(t)\|^2_{L^2(\T)} + \| e^{-\phi_{\varepsilon}(t)}- \rho_{0}(t) \|_{L^{2}(\T)}$ where the constant in the inequality is independent of $\varepsilon.$ Thus $\mathcal{E}(t)$ yields a control of the distance in $L^{2}(\T)$ between $(\rho_{\varepsilon},u_{\varepsilon},\phi_{\varepsilon})$ and $(\rho_{0},u_{0},\phi_{0})$ at time $t$.
Expanding the terms of \eqref{def:E}, we have the decomposition for $t \in [0,T]:$
\begin{align} \label{decomp_E}
    &\mathcal{E}(t) = {\cal H}(t) + \mathcal{E}_{kin}(t) 
    - \mathcal{E}_{int}(t),
\end{align}
with ${\cal H}(t)$ given by \eqref{def:H} and 
\begin{align}
& \mathcal{E}_{kin}(t) = \int_{\T} \rho_{\varepsilon} \left( \frac{u_{0}^2}{2} -u_{\varepsilon} u_{0} \right)dx,\\
& \mathcal{E}_{int}(t) = \int_{\T} \Big(\tilde{h}(\rho_{0}) + \tilde{h}'(\rho_{0})(e^{-\phi_{\varepsilon}} - \rho_{0}) \Big)dx.
\end{align}
A simple identity that will be used in the modulated energy estimate is the following.
\begin{lemma} \label{key_identity} Let $(\rho_{\varepsilon},u_{\varepsilon},\phi_{\varepsilon})$ be a strong solution to \eqref{EPB} on $[0,T]$ with $\rho_{\varepsilon} > 0.$ Then for every $\psi \in \mathrm{C}^{1}([0,T] \times \T)$:
\begin{equation}
    \rho_{\varepsilon}  (\partial_{t} + u_{\varepsilon} \partial_{x})(\psi) = \partial_{t}(\rho_{\varepsilon} \psi) + \partial_{x}(\rho_{\varepsilon} u_{\varepsilon} \psi).
\end{equation}
\begin{proof} A direct computation yields
$$
\rho_{\varepsilon}  \big(\partial_{t} + u_{\varepsilon} \partial_{x}\big)(\psi)   = \rho_{\varepsilon} \partial_{t} \psi + \rho_{\varepsilon} u_{\varepsilon} \partial_{x} \psi = \partial_{t} (\rho_{\varepsilon} \psi) + \partial_{x}(\rho_{\varepsilon} u_{\varepsilon} \psi) - \psi \big( \partial_{t}\rho_{\varepsilon} + \partial_{x} (\rho_{\varepsilon} u_{\varepsilon}) \big).
$$
It yields the claim thanks to the continuity equation.
\end{proof}
\end{lemma}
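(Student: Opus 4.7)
The statement is a standard transport-commutation identity, so the plan is essentially a one-line algebraic manipulation followed by an appeal to the continuity equation. I would proceed by expanding the right-hand side using the Leibniz rule rather than the left-hand side, since the right-hand side has the two conservative-form divergence terms that can be broken up into a piece involving derivatives of $\psi$ (which will reconstruct the left-hand side) plus a remainder carrying the continuity operator acting on $(\rho_\varepsilon, \rho_\varepsilon u_\varepsilon)$.

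Concretely, I would write
\[
\partial_{t}(\rho_{\varepsilon}\psi) + \partial_{x}(\rho_{\varepsilon}u_{\varepsilon}\psi) = \rho_{\varepsilon}\partial_{t}\psi + \rho_{\varepsilon}u_{\varepsilon}\partial_{x}\psi + \psi\bigl(\partial_{t}\rho_{\varepsilon} + \partial_{x}(\rho_{\varepsilon}u_{\varepsilon})\bigr),
\]
recognize the first two terms on the right as $\rho_\varepsilon(\partial_t + u_\varepsilon \partial_x)\psi$, and then invoke the first equation of \eqref{EPB} (the continuity equation $\partial_t \rho_\varepsilon + \partial_x(\rho_\varepsilon u_\varepsilon) = 0$) to eliminate the last parenthesis. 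The $\mathrm{C}^1$ regularity of $\psi$ together with the regularity of the strong solution ensures all derivatives are classical and the Leibniz rule is legitimate.

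There is really no obstacle here; the only subtle point is making sure that the hypothesis $\rho_\varepsilon > 0$ plays no role in the identity itself (it does not — the identity is purely algebraic from the product rule and the continuity equation, and holds for any sufficiently smooth $\rho_\varepsilon, u_\varepsilon$ satisfying the continuity equation). The positivity assumption is inherited from the statement only because the ambient setup of the paper requires it. I therefore expect the proof to be a two-line computation, which matches exactly the short argument given after the lemma.
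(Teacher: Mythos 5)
Your proof is correct and is essentially the paper's own argument read in the opposite direction: the paper expands the left-hand side and subtracts the continuity term, while you expand the right-hand side via the Leibniz rule and add it, but both reduce to the same product-rule identity combined with $\partial_t\rho_\varepsilon + \partial_x(\rho_\varepsilon u_\varepsilon)=0$. Your remark that the positivity of $\rho_\varepsilon$ is not actually used is accurate.
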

The main quantitative stability estimate is stated in the following proposition.
\begin{pro}{(Modulated energy estimate)}  Let $(\rho_{\varepsilon},u_{\varepsilon},\phi_{\varepsilon})$ be a strong solution to \eqref{EPB} on $[0,T]$ with $\rho_{\varepsilon} > 0$ and $(\rho_{0},u_{0},\phi_{0})$ be a strong solution  to \eqref{EPB0}   on $[0,T]$ with $\rho_{0} > 0$. Then we have for $t \in [0,T],$
\begin{align} \label{modulated_energy_estimate}
\mathcal{E}(t) \leq \mathcal{E}(0)e^{2\|\partial_x u_0\|_{L_{t,x}^{\infty}}t}
+ \varepsilon^2 \int_{0}^{t} \mathcal{L}(\tau) e^{2(t-\tau)\|\partial_x u_0\|_{L_{t,x}^{\infty}} } d\tau. 
\end{align}
where ${\cal L}(\tau) =\int_{\T} \partial_{txx} \phi_\varepsilon \log \rho_0 dx - \int_{\T} \partial_{xx}\phi_\varepsilon u_0 \partial_x \log \rho_0 dx$. \\
\end{pro}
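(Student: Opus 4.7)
The plan is to differentiate $\mathcal{E}(t)$ in time using the decomposition $\mathcal{E}(t) = \mathcal{H}(t) + \mathcal{E}_{kin}(t) - \mathcal{E}_{int}(t)$ from \eqref{decomp_E}, eliminate $\frac{d\mathcal{H}}{dt}$ via the conservation stated in Proposition \ref{prop_nrj}, and then manipulate the two remaining contributions so that (i) the $\varepsilon^0$ cross terms cancel algebraically and (ii) the surviving $\varepsilon^2$ terms combine into exactly $\varepsilon^2 \mathcal{L}(t)$ plus a harmless term proportional to $\partial_x u_0$. This will yield a differential inequality $\frac{d\mathcal{E}}{dt} \leq 2\|\partial_x u_0\|_{L^\infty_{t,x}}\mathcal{E}(t) + \varepsilon^2 \mathcal{L}(t)$, and \eqref{modulated_energy_estimate} will follow from the integrating-factor form of Grönwall's lemma.

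To compute $\frac{d\mathcal{E}_{kin}}{dt}$, I would apply Lemma \ref{key_identity} with $\psi = \frac{u_0^2}{2}$ and with $\psi = u_\varepsilon u_0$ (using that $u_\varepsilon \in C^1$), invoke the non-conservative momentum equation $\rho_\varepsilon(\partial_t u_\varepsilon + u_\varepsilon \partial_x u_\varepsilon) = \rho_\varepsilon \partial_x \phi_\varepsilon$, and substitute the relation $\partial_t u_0 + u_0 \partial_x u_0 = \partial_x \phi_0$ extracted from \eqref{EPB0}. Completing the square on the $\partial_x u_0$ terms produces
$$\frac{d\mathcal{E}_{kin}}{dt} = -\int_{\T} \rho_\varepsilon (u_\varepsilon - u_0)^2 \partial_x u_0\, dx + \int_{\T} \rho_\varepsilon (u_0 - u_\varepsilon) \partial_x \phi_0\, dx - \int_{\T} u_0 \rho_\varepsilon \partial_x \phi_\varepsilon\, dx.$$
For $\frac{d\mathcal{E}_{int}}{dt}$, I would first use $\tilde h(s) = s\log s - s$ to simplify $\tilde h(\rho_0) + \tilde h'(\rho_0)(e^{-\phi_\varepsilon} - \rho_0) = e^{-\phi_\varepsilon}\log \rho_0 - \rho_0$, discard $\int \rho_0\,dx$ by mass conservation for \eqref{EPB0}, substitute $e^{-\phi_\varepsilon} = \rho_\varepsilon - \varepsilon^2 \partial_{xx}\phi_\varepsilon$ from the Poisson equation, and finally use the continuity equations $\partial_t \rho_\varepsilon = -\partial_x(\rho_\varepsilon u_\varepsilon)$ and $\partial_t \log \rho_0 = -\partial_x u_0 - u_0 \partial_x \log \rho_0$ to handle the time derivatives.

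The key algebraic step---and the main obstacle---is then to track the cancellations between the two pieces. Since $\rho_0 = e^{-\phi_0}$ implies $\partial_x \log \rho_0 = -\partial_x \phi_0$, the terms $\int \rho_\varepsilon (u_0 - u_\varepsilon)\partial_x \phi_0\,dx$ arising from $\frac{d\mathcal{E}_{kin}}{dt}$ and $\int \rho_\varepsilon (u_\varepsilon - u_0)\partial_x \log \rho_0\,dx$ coming from $\frac{d\mathcal{E}_{int}}{dt}$ cancel exactly. For the remaining $\int u_0 \rho_\varepsilon \partial_x \phi_\varepsilon\,dx$ and $\int \rho_\varepsilon \partial_x u_0\,dx$ pieces, I would reuse Poisson to split $\rho_\varepsilon = e^{-\phi_\varepsilon} + \varepsilon^2 \partial_{xx}\phi_\varepsilon$ and integrate by parts; the $\varepsilon^0$ contributions from these substitutions cancel, and the $\varepsilon^2\int \partial_{xx}\phi_\varepsilon \partial_x u_0\,dx$ pieces produced on each side annihilate each other, leaving precisely
$$\frac{d\mathcal{E}}{dt} = -\int_{\T} \rho_\varepsilon (u_\varepsilon - u_0)^2 \partial_x u_0\, dx + \varepsilon^2 \int_{\T} \partial_x u_0 \frac{(\partial_x \phi_\varepsilon)^2}{2}\, dx + \varepsilon^2 \mathcal{L}(t).$$

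To conclude, I would bound $-\int \rho_\varepsilon (u_\varepsilon - u_0)^2 \partial_x u_0 \leq 2\|\partial_x u_0\|_{L^\infty_{t,x}} \int \rho_\varepsilon \frac{(u_\varepsilon - u_0)^2}{2}\,dx$ and likewise $\varepsilon^2 \int \partial_x u_0 \frac{(\partial_x \phi_\varepsilon)^2}{2}\,dx \leq 2\|\partial_x u_0\|_{L^\infty_{t,x}} \int \frac{\varepsilon^2}{2}|\partial_x \phi_\varepsilon|^2\,dx$; since the third piece of $\mathcal{E}(t)$ in \eqref{def:E} is non-negative (by convexity of $\tilde h$), the sum is dominated by $2\|\partial_x u_0\|_{L^\infty_{t,x}}\mathcal{E}(t)$. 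Applying Grönwall's inequality in integrating-factor form to $\frac{d\mathcal{E}}{dt} \leq 2\|\partial_x u_0\|_{L^\infty_{t,x}}\mathcal{E}(t) + \varepsilon^2 \mathcal{L}(t)$ then yields \eqref{modulated_energy_estimate}.
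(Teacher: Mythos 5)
Your proposal is correct and follows essentially the same route as the paper: the decomposition \eqref{decomp_E} together with energy conservation, the computation of $\frac{d}{dt}\mathcal{E}_{kin}$ via Lemma \ref{key_identity} and the Poisson equation, the reduction of $\frac{d}{dt}\mathcal{E}_{int}$ using $\tilde h(\rho_0)+\tilde h'(\rho_0)(e^{-\phi_\varepsilon}-\rho_0)=e^{-\phi_\varepsilon}\log\rho_0-\rho_0$, the cancellation driven by $\partial_t u_0+u_0\partial_x u_0+\partial_x\log\rho_0=0$, and the final Gr\"onwall argument all match the paper's proof, and your intermediate identity for $\frac{d}{dt}\mathcal{E}(t)$ coincides exactly with equation \eqref{calcul1} after simplification.
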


\begin{proof} Thanks to the energy conservation \eqref{nrj_cons} and \eqref{decomp_E}, we have for $t \in [0,T]$, 
\begin{align} \label{dt_nrj_mod}
    \frac{d}{dt} \mathcal{E}(t) = \frac{d}{dt} \mathcal{E}_{kin}(t) 
    - \frac{d}{dt} \mathcal{E}_{int}(t).
\end{align}
We shall now estimate each term. For the first term we have,
$$
\frac{d}{dt} \mathcal{E}_{kin}(t) = \int_{\T} \partial_{t} \rho_{\varepsilon} \Big( \frac{u_{0}^2}{2} - u_{\varepsilon} u_{0} \Big) dx + \int_{\T} \rho_{\varepsilon} \partial_{t} \Big( \frac{u_0^2}{2} - u_{\varepsilon} u_{0} \Big)dx.
$$
Using the continuity equation $\partial_{t} \rho_{\varepsilon} + \partial_{x} (\rho_{\varepsilon} u_{\varepsilon}) = 0$ and an integration by parts we obtain
$$
\frac{d}{dt} \mathcal{E}_{kin}(t) = \int_{\T} \rho_{\varepsilon} u_{\varepsilon} \partial_{x}\Big( \frac{u_0^2}{2} - u_{\varepsilon} u_{0} \Big)dx + \int_{\T} \rho_{\varepsilon} \partial_{t}\Big( \frac{u_0^2}{2} - u_{\varepsilon} u_{0}\Big) dx = \int_{\T} \rho_{\varepsilon} \big( \partial_{t} + u_{\varepsilon} \partial_{x} \big)\Big(\frac{u_{0}^2}{2} - u_{\varepsilon} u_{0} \Big) dx.
$$
We now apply Lemma \eqref{key_identity} with the function $\psi = \frac{u_{0}^2}{2} - u_{\varepsilon} u_{0}.$ Thus,
$$
\frac{d}{dt} \mathcal{E}_{kin}(t) = \int_{\T} \partial_{t} \Big( \rho_{\varepsilon}\Big( \frac{u_{0}^2}{2} - u_{\varepsilon} u_{0} \Big) \Big) + \partial_{x} \Big( \rho_{\varepsilon} u_{\varepsilon} \Big( \frac{u_{0}^2}{2} -u_{\varepsilon} u_{0} \Big) \Big) dx.
$$
We set for ease $I_{1}(t) = \int_{\T} \partial_{t} \Big( \rho_{\varepsilon} \frac{u_{0}^2}{2} \Big) + \partial_{x}\Big( \rho_{\varepsilon} u_{\varepsilon} \frac{u_{0}^2}{2} \Big) dx,$
and
$
I_{2}(t) = - \int_{\T} \partial_{t}\Big( \rho_{\varepsilon} u_{\varepsilon} u_{0} \Big) + \partial_{x} \Big( \rho_{\varepsilon} u_{\varepsilon}^2 u_{0} \Big) dx.
$
We begin to treat $I_{1}.$
Multiplying the continuity equation of \eqref{EPB} by $\frac{u_{0}^2}{2}$ we obtain
$$
\partial_{t} \Big (\rho_{\varepsilon} \frac{u_{0}^2}{2} \Big) + \partial_{x} \Big( \rho_{\varepsilon} u_{\varepsilon} \frac{u_{0}^2}{2} \Big) = \rho_{\varepsilon} ( \partial_{t} + u_{\varepsilon} \partial_{x} )\Big( \frac{u_{0}^2}{2} \Big).
$$
Therefore,
$$
I_{1}(t) = \int_{\T} \rho_{\varepsilon} ( \partial_{t} + u_{\varepsilon} \partial_{x} )\Big( \frac{u_{0}^2}{2} \Big) dx = \int_{\T} \rho_{\varepsilon} u_{0}( \partial_{t} + u_{\varepsilon} \partial_{x} )( u_{0} ) dx
$$
As for $I_{2}$, we multiply the momentum equation of \eqref{EPB} by $u_{0}$ to get
$$
\partial_{t}\big( \rho_{\varepsilon} u_{\varepsilon} u_{0} \big) + \partial_{x}\big( \rho_{\varepsilon} u_{\varepsilon}^2 u_{0} \big) = 
\rho_{\varepsilon} u_0\partial_{x} \phi_{\varepsilon} 
+ \rho_{\varepsilon} u_{\varepsilon} \big( \partial_{t} + u_{\varepsilon} \partial_{x} \big)(u_{0}).
$$
Therefore,
$$
I_{2}(t) = - \int_{\T}\Big[ \rho_{\varepsilon}u_0 \partial_{x} \phi_{\varepsilon}  + \rho_{\varepsilon} u_{\varepsilon} \big( \partial_{t} + u_{\varepsilon} \partial_{x} \big)(u_{0})\Big] dx.
$$
Combining $I_{1}$ and $I_{2}$ we glean,
\begin{align*}
&\frac{d}{dt} \mathcal{E}_{kin}(t) = \int_{\T} \Big[\rho_{\varepsilon} u_{0} ( \partial_{t} + u_{\varepsilon} \partial_{x} )( u_{0} ) -   \rho_{\varepsilon} u_0\partial_{x} \phi_{\varepsilon}  - \rho_{\varepsilon} u_{\varepsilon} \big( \partial_{t} + u_{\varepsilon} \partial_{x} \big)(u_{0}) \Big]dx  \\
&= \int_{\T} \Big[\rho_{\varepsilon} (u_{0} - u_{\varepsilon}) ( \partial_{t} + u_{\varepsilon} \partial_{x} )( u_{0} )  -   \rho_{\varepsilon}u_0 \partial_{x} \phi_{\varepsilon}\Big]  dx\\
&= \int_{\T} \rho_{\varepsilon} (u_{0}-u_{\varepsilon})( \partial_{t} + u_{0} \partial_{x} )( u_{0})dx - \int_{\T} \rho_{\varepsilon} (u_{0}-u_{\varepsilon})^2 \partial_{x} u_{0} dx
-\int_{\T} \rho_{\varepsilon} \partial_{x} \phi_{\varepsilon} u_{0} dx.
\end{align*}
Using the Poisson equation, we have for the last term 
\begin{align*}
&\int_{\T} \rho_{\varepsilon} \partial_{x} \phi_{\varepsilon} u_{0} dx = \int_{\T} \big( \varepsilon^2 \partial_{xx} \phi_{\varepsilon} + e^{-\phi_{\varepsilon}} \big) \partial_{x} \phi_{\varepsilon} u_{0} dx = \int_{\T}  \partial_{x}\Big(  \frac{\varepsilon^2}{2} \vert  \partial_{x} \phi_{\varepsilon} \vert ^2 - e^{-\phi_{\varepsilon}} \Big) u_{0}dx\\
&= -\int_{\T} \Big(\frac{\varepsilon^2}{2} \vert  \partial_{x} \phi_{\varepsilon} \vert ^2 - e^{-\phi_{\varepsilon}} \Big) \partial_{x} u_{0} dx.
\end{align*}
Therefore,
\begin{align}
\label{dt-Ekin}
&\frac{d}{dt} \mathcal{E}_{kin}(t) =  \int_{\T} \Big(\frac{\varepsilon^2}{2} \vert  \partial_{x} \phi_{\varepsilon} \vert ^2 - e^{-\phi_{\varepsilon}} \Big) \partial_{x} u_{0} dx - \int_{\T} \rho_{\varepsilon} (u_{0}-u_{\varepsilon})^2 \partial_{x} u_{0} dx \\
&+\int_{\T} \rho_{\varepsilon} (u_{0}-u_{\varepsilon})( \partial_{t} + u_{0} \partial_{x} )( u_{0} )dx.\nonumber
\end{align}
As for $\mathcal{E}_{int}$, we have 
\begin{eqnarray}
-\frac{d}{dt} \mathcal{E}_{int}(t) &=&  \frac{d}{dt} \int_{\T} (\rho_0 - e^{-\phi_\varepsilon} \log\rho_0) dx \nonumber\\
&=& \int_{\T} \partial_t \rho_0 dx  - \int_{\T} \Big[(\partial_t e^{-\phi_\varepsilon}) \log\rho_0 + e^{-\phi_\varepsilon}\frac{\partial_t \rho_0}{\rho_0}\Big] dx\nonumber\\
&=& \int_{\T} \Big(-\frac{e^{-\phi_\varepsilon}}{\rho_0} + 1\Big)\partial_t \rho_0 dx + \int_{\T}(\varepsilon^2 \partial_{txx} \phi_\varepsilon-\partial_t \rho_\varepsilon) \log \rho_0 dx\nonumber\\
\label{dtEinte}
&=& \int_{\T} \Big(-\frac{e^{-\phi_\varepsilon}}{\rho_0} + 1\Big)\partial_t \rho_0 dx + \int_{\T}\varepsilon^2 \partial_{txx} \phi_\varepsilon \log \rho_0 dx +\int_{\T} \partial_x (\rho_{\varepsilon}u_{\varepsilon}) \log \rho_0 dx,\nonumber \\
&=& \int_{\T} \frac{e^{-\phi_\varepsilon}}{\rho_0}(\rho_0\partial_x u_0 + u_0\partial_x \rho_0)dx + \int_{\T}\varepsilon^2 \partial_{txx} \phi_\varepsilon \log \rho_0 dx +\int_{\T} \partial_x (\rho_{\varepsilon}u_{\varepsilon}) \log \rho_0 dx,\nonumber\\
\label{dtEinte2}
&=& \int_{\T} e^{-\phi_\varepsilon}\partial_x u_0 dx+\int_{\T} e^{-\phi_\varepsilon} u_0\partial_x \log \rho_0 dx + \int_{\T}\varepsilon^2 \partial_{txx} \phi_\varepsilon \log \rho_0 dx +\int_{\T} \partial_x (\rho_{\varepsilon}u_{\varepsilon}) \log \rho_0 dx,
\end{eqnarray}
where we have used that $\int_{\T} \partial_{t} \rho_{0} dx = 0$. 

We then observe that 
\begin{eqnarray*}
\int_\T \rho_\varepsilon (u_0-u_\varepsilon) \partial_x \log \rho_0 dx &=& \int_\T \rho_\varepsilon u_0 \partial_x \log \rho_0 dx - \int_\T (\rho_{\varepsilon}u_{\varepsilon}) \partial_x \log \rho_0 dx\nonumber\\
&=& \int_{\T} e^{-\phi_\varepsilon} u_0 \partial_x \log \rho_0 dx +\varepsilon^2\int_{\T} \partial_{xx}\phi_\varepsilon u_0 \partial_x \log \rho_0 dx + \int_\T \partial_x (\rho_{\varepsilon}u_{\varepsilon}) \log \rho_0 dx, 
\end{eqnarray*}
so that \eqref{dtEinte2} rewrites 
\begin{eqnarray}
-\frac{d}{dt} \mathcal{E}_{int}(t)&=&\int_{\T} e^{-\phi_\varepsilon}\partial_x u_0 dx +\int_\T \rho_\varepsilon (u_0-u_\varepsilon) \partial_x \log \rho_0 dx
\nonumber\\
\label{dt-Eint}
&&+ \int_{\T}\varepsilon^2 \partial_{txx} \phi_\varepsilon \log \rho_0 dx-\varepsilon^2\int_{\T} \partial_{xx}\phi_\varepsilon u_0 \partial_x \log \rho_0 dx. 
\end{eqnarray}
Gathering the equalities \eqref{dt-Ekin} and \eqref{dt-Eint}, we get
\begin{align}
&\frac{d}{dt}\mathcal{E}(t) = \int_{\T} \Big(\frac{\varepsilon^2}{2} \vert  \partial_{x} \phi_{\varepsilon} \vert ^2 - e^{-\phi_{\varepsilon}} \Big) \partial_{x} u_{0} dx - \int_{\T} \rho_{\varepsilon} (u_{0}-u_{\varepsilon})^2 \partial_{x} u_{0} dx\\
&+\int_{\T} \rho_{\varepsilon} (u_{0}-u_{\varepsilon})( \partial_{t} + u_{0} \partial_{x} )( u_{0})dx \\
&+\int_{\T} e^{-\phi_\varepsilon}\partial_x u_0 dx +\int_\T \rho_\varepsilon (u_0-u_\varepsilon) \partial_x \log \rho_0 dx\\
& +\int_{\T}\varepsilon^2 \partial_{txx} \phi_\varepsilon \log \rho_0 dx-\varepsilon^2\int_{\T} \partial_{xx}\phi_\varepsilon u_0 \partial_x \log \rho_0 dx\\
&=\int_\T \rho_\varepsilon (u_0-u_\varepsilon) \Big[\partial_t u_0+u_0\partial_x u_0 + \partial_x\log \rho_0 \Big]dx - \int_{\T} \rho_{\varepsilon} (u_{0}-u_{\varepsilon})^2 \partial_{x} u_{0} dx \\
\label{calcul1}
&+ \int_{\T} \frac{\varepsilon^2}{2} \vert  \partial_{x} \phi_{\varepsilon} \vert ^2\partial_x u_0 dx
+\int_{\T}\varepsilon^2 \partial_{txx} \phi_\varepsilon \log \rho_0 dx-\varepsilon^2\int_{\T} \partial_{xx}\phi_\varepsilon u_0 \partial_x \log \rho_0 dx.  
\end{align} 
Moreover, $u_0$ satisfies  
$\partial_t u_0 + u_{0} \partial_{x}u_{0} +\partial_x\log\rho_0
=0$ so that we eventually obtain 
\begin{align*}
&    \frac{d}{dt} \mathcal{E}(t) =  \int_{\T} \Big[ -\rho_{\varepsilon}(u_{0}-u_{\varepsilon})^2 + \frac{\varepsilon^2}{2} \vert \partial_{x} \phi_{\varepsilon} \vert^2 
\Big]\partial_{x} u_{0} dx +\varepsilon^2\int_{\T}\Big[ \partial_{txx} \phi_\varepsilon \log \rho_0 - \partial_{xx}\phi_\varepsilon u_0 \partial_x \log \rho_0 \Big]dx.
\end{align*}
Hence, we deduce the following inequality  
$$
\frac{d}{dt}\mathcal{E}(t) \leq \varepsilon^2 {\cal L}(t) + 2\mathcal{E}(t)\|\partial_x u_0\|_{L_{t,x}^{\infty}}, 
$$
with ${\cal L}(t) =\int_{\T} [\partial_{txx} \phi_\varepsilon \log \rho_0 - \partial_{xx}\phi_\varepsilon u_0 \partial_x \log \rho_0 ]dx$ and a Grönwall lemma enables us to conclude.\\
\end{proof}
Note that in \cite{Han-Kwan01082011}, it is explained how to bound $\mathcal{L}(t)$ with respect to $\varepsilon.$

\section{Discretization}

Let $\Delta x = \frac{1}{N+1}$ where $N \in \N^{\star}$ is fixed. We consider a uniform grid defined by the sequence of points $(x_{i} = i \Delta x)_{i \in \Z}$. Since we work on $\T$, we shall identify two points of the same grid according to the equivalence relation defined on $\R$ by
\begin{equation} \label{equiv_relation_1}
\forall x, y \in \R, \quad x \equiv y \: \textnormal{mod} \: \Z \Leftrightarrow x-y \in \Z.
\end{equation}
It yields in particular an identification of the torus $\T$ with the unit interval $[0,1)$ through the unique  decomposition of a real number:
\begin{equation}
    \forall x \in \R, \: x = \lfloor x \rfloor + \lbrace x \rbrace
\end{equation}
where $\lfloor \cdot \rfloor$ denotes the integer part function and $\lbrace  \cdot \rbrace$ denotes the fractional part function. Especially,
\begin{equation} \label{identification_torus}
\forall x \in \R, \: x \equiv \lbrace x \rbrace \:  \textnormal{mod} \: \Z.
\end{equation}
The relation \eqref{identification_torus} applied to the grid points $(i \Delta x)_{i \in \Z}$ yields 
\begin{equation}
    \forall i \in \Z,  \: \exists ! i^{\star} \in \lbrace 0,\dots,N \rbrace \quad x_{i} \equiv x_{i^{\star}} \: \textnormal{mod} \:  \Z,
\end{equation}
where $i^{*}$ is the remainder of the Euclidean division of $i$ by $(N+1).$
We then idenfity the quotient space $\Z / (N+1) \Z$ with the first $N+1$ non negative integers. So, in the following we shall systematically identify an integer with its unique representation in $\lbrace 0,\dots,N \rbrace.$ To approximate the solutions of \eqref{EPB}, we use a finite volume approach where $\T$ is covered by a union of non empty disjoints intervals of size $\Delta x$.  
We then define two meshes
\begin{equation}
\mathcal{T} := \bigcup_{i = 0}^{N} C_{i}, \quad C_{i} = \Big [x_{i}- \frac{\Delta x}{2} , x_{i}+ \frac{\Delta x}{2} \Big), \quad \mathcal{T}^{\star} := \bigcup_{i = 0}^{N} C_{i+\frac{1}{2}}, \quad C_{i + \frac{1}{2}} = \Big [x_{i+\frac{1}{2}}- \frac{\Delta x}{2} , x_{i+\frac{1}{2}}+ \frac{\Delta x}{2} \Big),
\end{equation}
where $x_{i+\frac{1}{2}} = x_{i} + \frac{\Delta x}{2}$, $\mathcal{T}$ is called the primal mesh and $\mathcal{T}^{\star}$ is called the dual mesh. We then consider two spaces of piecewise constant functions on $\T$: 
\begin{align}
    X(\mathcal{T}) = \Big \lbrace v \in L^{1}_{\textnormal{loc}}(\T;\R), \:  \forall i \in \lbrace 0,\dots,N \rbrace, \: v(x) = v_{i} :=\frac{1}{\Delta x} \int_{C_{i}} v(x) dx \textnormal{ if } x \in C_{i} \Big \rbrace, \\
    X(\mathcal{T}^{\star}) =\Big \lbrace v \in L^{1}_{\textnormal{loc}}(\T;\R), \:  \forall i \in \lbrace 0,\dots,N \rbrace, \: v(x) = v_{i+\frac{1}{2}}:=\frac{1}{\Delta x} \int_{C_{i+\frac{1}{2}}} v(x) dx \textnormal{ if } x \in C_{i+\frac{1}{2}} \Big \rbrace.
\end{align}
A natural space to estimate the solution to the Poisson equation is $H^{1}(\T)$ endowed with its canonical norm. We consider the discrete analogue for functions in $X(\mathcal{T})$ by introducing the discrete gradient defined by
\begin{equation}\label{discrete_gradient}
    \forall \varphi \in X(\mathcal{T}), \quad (\delta \varphi) \in X(\mathcal{T}^{\star}) \textnormal{ and } (\delta \varphi)_{i+\frac{1}{2}} =\frac{ \varphi_{i+1}-\varphi_{i} }{\Delta x}, i\in \lbrace 0,\dots,N\rbrace.
\end{equation}
The discrete $H^{1}$ semi-norm is defined by
\begin{align}
    \label{def-h1-semi-norm}
    \forall \varphi \in X(\mathcal{T}), \quad \big | \varphi \big |_{H^{1}(\T)} = \Bigg( \sum_{i=0}^{N} \Big \vert \frac{\varphi_{i+1}-\varphi_{i}}{\Delta x} \Big \vert^2 \Delta x \Bigg)^\frac{1}{2}.
\end{align}
The discrete Laplacian is defined for functions in $X(\mathcal{T})$ by 

\begin{equation}\label{discrete_laplacian}
    \forall \varphi \in X(\mathcal{T}), \quad (\Delta \varphi) \in X(\mathcal{T}) \textnormal{ and } (\Delta \varphi)_{i} =\frac{ \varphi_{i+1}-2\varphi_{i}  + \varphi_{i-1}}{\Delta x^2}, i\in \lbrace 0,\dots,N\rbrace.
\end{equation}
We will use routinely a discrete analogue of the integration by parts formulas. More precisely, we have.
\begin{lemma}{(Discrete integration by parts)} \label{ipp_lemma}
It holds,
\begin{align}
\forall (v,p) \in X(\mathcal{T}^{\star}) \times X(\mathcal{T}), \quad \sum_{i=0}^N v_{i+1/2}(p_{i+1} - p_i) \Delta x = -\sum_{i=0}^N (v_{i+1/2}-v_{i-1/2})p_i \Delta x, \label{ipp1}\\
\forall (\varphi,\psi) \in X(\mathcal{T})^2, \quad \sum_{i=0}^{N} (\Delta \varphi)_{i} \psi_{i} \Delta x = - \sum_{i=0}^{N} (\delta \phi)_{i+\frac{1}{2}} (\delta \psi)_{i+\frac{1}{2}} \Delta x. \label{ipp2}
\end{align}
\end{lemma}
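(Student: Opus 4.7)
The plan is to prove both identities by direct manipulation of finite sums, using crucially the periodicity built into the identification of indices modulo $N+1$ on the torus.

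For the first identity \eqref{ipp1}, my approach is a discrete Abel summation. Starting from the left-hand side, I would split
\begin{equation*}
\sum_{i=0}^N v_{i+1/2}(p_{i+1} - p_i) \Delta x = \sum_{i=0}^N v_{i+1/2} p_{i+1} \Delta x - \sum_{i=0}^N v_{i+1/2} p_i \Delta x,
\end{equation*}
and then shift the index in the first sum by $j = i+1$. By the periodic identification \eqref{identification_torus} on the torus (i.e.\ $p_{N+1} = p_0$ and $v_{N+3/2} = v_{1/2}$, or equivalently $v_{-1/2} = v_{N+1/2}$), the shifted sum becomes $\sum_{i=0}^{N} v_{i-1/2} p_i \Delta x$ with no boundary contribution. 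Factoring out $p_i \Delta x$ then yields exactly the right-hand side of \eqref{ipp1}. The only subtlety to be careful about is the periodic reindexing; no real analytic content is involved.

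For the second identity \eqref{ipp2}, my plan is simply to observe that the discrete Laplacian is the composition of two discrete gradients on the staggered meshes: from the definitions \eqref{discrete_gradient} and \eqref{discrete_laplacian},
\begin{equation*}
(\Delta \varphi)_i = \frac{\varphi_{i+1}-2\varphi_i+\varphi_{i-1}}{\Delta x^2} = \frac{(\delta \varphi)_{i+1/2} - (\delta \varphi)_{i-1/2}}{\Delta x}.
\end{equation*}
Plugging this into the left-hand side of \eqref{ipp2} gives
\begin{equation*}
\sum_{i=0}^{N} (\Delta \varphi)_i \psi_i \Delta x = \sum_{i=0}^{N} \bigl((\delta \varphi)_{i+1/2} - (\delta \varphi)_{i-1/2}\bigr) \psi_i,
\end{equation*}
which is precisely the right-hand side of the formula obtained in \eqref{ipp1} (up to sign) applied with the choice $v = \delta \varphi \in X(\mathcal{T}^{\star})$ and $p = \psi \in X(\mathcal{T})$. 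Using \eqref{ipp1} then converts the sum into $-\sum_{i=0}^{N} (\delta \varphi)_{i+1/2} (\psi_{i+1}-\psi_i) \Delta x / \Delta x \cdot \Delta x = -\sum_{i=0}^{N} (\delta \varphi)_{i+1/2} (\delta \psi)_{i+1/2} \Delta x$, which is the desired right-hand side.

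The main (and really only) obstacle is bookkeeping with the periodic indices: one must ensure the telescoping boundary terms exactly cancel thanks to the quotient $\Z/(N+1)\Z$ structure, rather than leaving residual boundary contributions as one would have on an interval with Dirichlet or Neumann conditions. Apart from this, both identities are essentially a rephrasing of summation by parts, and no estimate or analytic argument is required.
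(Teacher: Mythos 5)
Your proposal is correct and follows essentially the same route as the paper: the proof of \eqref{ipp1} is the identical index shift plus periodic identification $v_{-1/2}=v_{N+1/2}$, $p_0=p_{N+1}$. For \eqref{ipp2} the paper omits the details (stating only that the same translation-of-indices argument applies), and your derivation of it from \eqref{ipp1} via the factorization $(\Delta\varphi)_i = \big((\delta\varphi)_{i+1/2}-(\delta\varphi)_{i-1/2}\big)/\Delta x$ is a clean and fully valid way to fill that gap.
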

\begin{proof}
Both \eqref{ipp1} and \eqref{ipp2} are obtained using a translation of indices and using the periodicity. We only prove \eqref{ipp1}. Using a translation of indices, we get
\begin{eqnarray}
\sum_{i=0}^N v_{i+1/2}(p_{i+1} - p_i) \Delta x&=& \sum_{i=0}^N v_{i+1/2}p_{i+1} \Delta x- \sum_{i=0}^N v_{i+1/2}p_i \Delta x =  \sum_{i=1}^{N+1} v_{i-1/2}p_{i} \Delta x- \sum_{i=0}^N v_{i+1/2}p_i  \Delta x \nonumber\\
&=& -\sum_{i=0}^{N} (v_{i+1/2} - v_{i-1/2})p_{i}\Delta x - v_{-1/2}p_0+v_{N+1/2}p_{N+1}. \nonumber
\end{eqnarray}
By periodicity, we have $v_{-1/2}=v_{N+1/2}$ 
and $p_0=p_{N+1}$ which gives the result. 
\end{proof}
As we are mainly concerned with discrete analogues of \eqref{nrj_cons} and \eqref{modulated_energy_estimate}, we introduce a discrete analogue of the energy functional and of its modulated version. We thus define  the discrete total energy for $(\r,u,\phi) \in X(\mathcal{T}) \times X(\mathcal{T}^{\star}) \times X(\mathcal{T})$ by
\begin{equation} \label{def:H_discret}
   \mathcal{H}(\rho,u,\phi) = \sum_{i=0}^{N} \Big(  \rho_{i+\frac{1}{2}}\frac{u_{i+\frac{1}{2}}^2}{2}+ \frac{\varepsilon^2}{2} \Big \vert \frac{  \phi_{i+1}-\phi_{i}}{\Delta x } \Big \vert^2 + h(\phi_{i}) \Big) \Delta x.
\end{equation}
The discrete modulated version around a constant state  $(\bar{u},\bar{\phi}) \in X(\mathcal{T}^{\star})\times X(\mathcal{T})$ is given by 
\begin{align}
\label{modulated_energy_discrete}
 \quad {\cal E}(\rho,u,\phi \:  | \:  \bar{u},\bar{\phi})  &= \sum_{i=0}^{N} \rho_{i+\frac{1}{2}} \frac{(u_{i+\frac{1}{2}}- \bar{u})^2}{2} \Delta x + \sum_{i=0}^{N} \frac{\varepsilon^2}{2} \Big \vert \frac{ \phi_{i+1}-\phi_{i}}{\Delta x} \Big \vert^2 \Delta x \\
&+ \sum_{i=0}^{N}\Big[ \tilde{h}(e^{-\phi_{i}}) - \Big(\tilde{h}(e^{-\bar{\phi}})+\tilde{h}'(e^{-\bar{\phi}})(e^{-\phi_{i}}-e^{-\bar{\phi}})  \Big) \Big]\Delta x. \nonumber
\end{align}
As in \eqref{decomp_E}, expanding the first term we have the decomposition
\begin{equation} \label{discrete_decomp_E}
    {\cal E}(\rho,u,\phi|\bar{u}, \bar{\phi}) = \mathcal{H}(\rho,u,\phi) + {\cal E}_{kin}(\rho, u|\bar{u}) - {\cal E}_{int}(\phi|\bar{\phi})
\end{equation}
where $\mathcal{H}$ is given in \eqref{def:H_discret} and
\begin{align}
    &{\cal E}_{kin}(\rho, u|\bar{u}) = \sum_{i=0}^{N} \rho_{i+\frac{1}{2}} \Big(  \frac{\bar{u}^2 }{2} - u_{i+\frac{1}{2}}\bar{u} \Big) \Delta x,\\
   & {\cal E}_{int}(\phi | \bar{\phi}) = \sum_{i=0}^{N} \Big( \tilde{h}(e^{-\bar{\phi}})+\tilde{h}'(e^{-\bar{\phi}})(e^{-\phi_{i}}-e^{-\bar{\phi}})  \Big) \Delta x.
\end{align}

\subsection{The time implicit scheme}\label{sec:discrete_scheme}
We fix $N_{T} \in \N^{\star}$ and set  $\Delta t = \frac{T}{N_{T}}$. We consider a uniform in time discretization $(t_{n})_{n = 0,\dots,N_{T}} = (n \Delta t )_{n= 0,\dots,N_{T}}$.
For each $n= 0,\dots,N_{T}$, we consider
$(\rho^{n}, u^{n}, \phi^{n}) \in X(\mathcal{T})\times X(\mathcal{T}^{*}) \times X(\mathcal{T})$ defined by induction 
for $n \in \lbrace 0,\dots,N_{T}-1 \rbrace$ :
\begin{equation} \label{EPBD}
\begin{cases}
&\displaystyle  \frac{ \rho_i^{n+1} - \rho_i^n}{\Delta t} + \frac{1}{\Delta x}({\cal F}_{i+1/2}^{n+1}-{\cal F}_{i-1/2}^{n+1})= 0,  \\
&\displaystyle  \frac{ \rho_{i+\frac{1}{2}}^{n+1} u_{i+\frac{1}{2}}^{n+1} - \rho_{i+\frac{1}{2}}^{n} u_{i+\frac{1}{2}}^{n} }{\Delta t} + \frac{1}{\Delta x}({\cal F}_{i+1}^{n+1}u_{i+1}^{n+1}-{\cal F}_{i}^{n+1}u_{i}^{n+1}) =  \tilde{\rho}_{i+\frac{1}{2}}^{n+1} \delta (\phi^{n+1})_{i+\frac{1}{2}}, \\
&\displaystyle  \varepsilon^2 \Delta(\phi^{n+1})_{i}  + e^{-\phi_i^{n+1}}  = \rho^{n+1}_i,
\end{cases}
\end{equation}
where $i \in \lbrace 0,\dots,N \rbrace$. The system \eqref{EPBD} is supplemented with an initial condition $\rho^{0} \in X(\mathcal{T})$ and $u^{0} \in X(\mathcal{T}^{\star})$ such that
\begin{align}\label{init_cond_discrete}
\rho^{0}_{i} = \frac{1}{\Delta x} \int_{C_{i}}  \rho_{\varepsilon}^{\textnormal{ini}}(x) dx, \quad u^{0}_{i+\frac{1}{2}} = \frac{1}{\Delta x} \int_{C_{i+\frac{1}{2}}} u_{\varepsilon}^{\textnormal{ini}}(x) dx.
\end{align}
For the sake of conciseness in the notation we have voluntarily discarded the dependence with respect to $\varepsilon$ of the discrete solution to \eqref{EPBD}. In \eqref{EPBD}, the flux of mass at the interface $x_{i+\frac{1}{2}}$ is defined by
\begin{equation} \label{flux_of_mass}
    \mathcal{F}_{i+\frac{1}{2}}^{n+1} = G(\rho_{i}^{n+1},\rho_{i+1}^{n+1},u_{i+\frac{1}{2}}^{n+1})
\end{equation}
where $G : \R^{3} \longrightarrow \R$ is the function defined by
\begin{equation} \label{def:G}
  \forall (s,t,u) \in \R^3, \:   G(s,t,u) = s g(u) - t (g(u)-u)
\end{equation}
where $g : \R \rightarrow \R$ is some arbitrary function that satisfies the following assumptions:
\begin{align}
    g \in \textnormal{Lip}(\R), \label{ass_1_g}\\
    \forall u \in \R, \quad g(u) \geq \max(u,0), \label{ass_2_g}\\
    g \textnormal{ is differentiable at } \: 0. \label{ass_3_g}
\end{align}
Under the assumptions \eqref{ass_1_g}, \eqref{ass_2_g} and \eqref{ass_3_g}, the function $G$ is a Lipschitz continuous, non decreasing in its first variable and non increasing in its second variable. Moreover, it verifies the usual consistency relation in the finite volume sense:
\begin{align}
    \forall (\rho,u) \in \R^2, \: G(\rho,\rho,u) = \rho u.\label{consistency_relation}
\end{align}
The function $g$ could be thought as some regularization of the function $u \mapsto \max(u,0).$ The density and its flux of mass at the interface $x_{i+\frac{1}{2}}$ are defined by:
\begin{equation}
    \label{rho_dual}
    \rho_{i+\frac{1}{2}}^{n+1} = \frac{ \rho_{i+1}^{n+1} + \rho_{i}^{n+1} }{2}, \quad
    \mathcal{F}_{i}^{n+1} = \frac{ \mathcal{F}_{i+\frac{1}{2}}^{n+1} + \mathcal{F}_{i-\frac{1}{2}}^{n+1}}{2}. 
\end{equation}
An elementary consequence of these two definitions and of the discrete continuity equation is:
\begin{align*}
&\exists i \in \lbrace 0,\dots,N \rbrace, \: \forall j \in \lbrace i,i+1\rbrace :\;   \frac{ \rho_j^{n+1} - \rho_j^n}{\Delta t} + \frac{1}{\Delta x}({\cal F}_{j+1/2}^{n+1}-{\cal F}_{j-1/2}^{n+1})= 0 \\
&\Longrightarrow     \frac{ \rho_{i+\frac{1}{2}}^{n+1} - \rho_{i+\frac{1}{2}}^n}{\Delta t} + \frac{1}{\Delta x}({\cal F}_{i+1}^{n+1}-{\cal F}_{i-1}^{n+1})= 0.
\end{align*}
It embodies the fact that if the discrete continuity equation is satisfied on the cells $C_{i}$ and $C_{i+1}$ for some $i$ then it has a dual analogue on the cell $C_{i+\frac{1}{2}}$ which is in between.
The velocity at the interface $x_{i}$ is defined by:
\begin{equation} \label{u_primal}
    u_{i}^{n+1} = \begin{cases}
        u_{i-\frac{1}{2}}^{n+1} \textnormal{ if } \mathcal{F}_{i}^{n+1} \geq 0,\\
        u_{i+\frac{1}{2}}^{n+1} \textnormal{ if }\mathcal{F}_{i}^{n+1} < 0.
    \end{cases}
\end{equation}
An originality of this work is a definition of the density in the forcing term of discrete momentum equation given by:
\begin{equation} \label{rho_upwind}
    \tilde{\rho}_{i+\frac{1}{2}}^{n+1} = \begin{cases}
\displaystyle \frac{ G(\rho_{i}^{n+1},\rho_{i+1}^{n+1},u_{i+\frac{1}{2}}^{n+1}) - G(\rho_{i}^{n+1},\rho_{i+1}^{n+1},0) }{ u_{i+\frac{1}{2}}^{n+1} - 0 }, \;\;\;     \textnormal{ if } u_{i+\frac{1}{2}}^{n+1} \neq 0,\\
             \\
        \rho_{i+1}^{n+1} -(\rho_{i+1}^{n+1}-\rho_{i}^{n+1}) g'(0), \;\;\;\hspace{2.1cm} \textnormal{ if } u_{i+\frac{1}{2}}^{n+1} = 0,
    \end{cases}
\end{equation} 
which yields the unconditional dissipativity of the force term in the sense that
\begin{align} \label{dissipative_force}
  &\forall n \in \lbrace 0,\dots,N_{T}-1 \rbrace, \quad   \sum_{i=0}^{N} \tilde{\rho}_{i+\frac{1}{2}}^{n+1} u_{i+\frac{1}{2}}^{n+1} \delta (\phi^{n+1})_{i+\frac{1}{2}} \Delta x + \frac{\varepsilon^2}{2} | \phi^{n+1} |_{H^{1}(\T)}^2 +  \sum_{i=0}^{N} h(\phi_{i}^{n+1}) \Delta x \\
  &\leq  \frac{\varepsilon^2}{2} | \phi^{n} |_{H^{1}(\T)}^2+  \sum_{i=0}^{N} h(\phi_{i}^{n}) \Delta x. \nonumber
\end{align}

Note that the function $g$ is differentiable at the origin, so the definition \eqref{rho_upwind} makes $\tilde{\rho}_{i+\frac{1}{2}}^{n+1}$ a continuous functions of each of its arguments. Indeed we prove.
\begin{lemma}{(Continuity of $\tilde{\rho}$)} The function $\tilde{\rho} : \R^{3} \longrightarrow \R$ given by
\begin{align}
    \tilde{\rho}(s,t,u) = \begin{cases}
        \frac{G(s,t,u) - G(s,t,0)}{u-0} \quad (s,t,u) \in \R^{3} \setminus F ,\\
        t - (t-s) g'(0) \quad (s,t,u) \in F,
    \end{cases}
\end{align}
where $F = \Big\lbrace (s,t,0) \: :  (s,t) \in \R^2 \Big\rbrace$
is continuous on $\R^{3}.$
\begin{proof} Since $g$ is a Lipschitz continuous on $\R$, the function $G$ is continuous on $\R^{3}$ as a sum and product of continuous functions. Thus,  $\tilde{\rho}$ is a continuous function in the open set $\R^{3} \setminus F$. Then remark that for $(s,t,u) \in \R^{3} \setminus F$ we have 
\begin{align}
    \tilde {\rho}(s,t,u) = t -(t-s) \hat{g}(u)
\end{align}
where  $\hat{g}(u) = \begin{cases}
   \displaystyle \frac{g(u) - g(0)}{u} \; \mbox{ if } \; u \neq 0, \\
    g'(0) \quad\quad\quad \,\, \mbox{ if } \; u = 0.
\end{cases}$
Observe that $\hat{g}$ is a continuous function on $\R$ since $g$ is continuous on $\R^{\star}$ and it is differentiable at the origin. For $(s,t,u) \in \R^{3} \setminus F$ we have
\begin{align}
    \Big \vert \tilde{\rho}(s,t,u) - \big(t - (t-s)) g'(0)\big) \Big \vert  = \Big \vert t-s \Big \vert \Big \vert \hat{g}(u) - g'(0) \Big \vert.
\end{align}
By continuity of $\hat{g}$ we deduce that $\tilde{\rho}(s,t,u) \longrightarrow \tilde{\rho}(s,t,0)$ as $(s,t,u) \longrightarrow (s,t,0).$ 
\end{proof}
\end{lemma}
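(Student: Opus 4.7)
The plan is to reduce the piecewise definition of $\tilde{\rho}$ to a single continuous expression on all of $\R^3$, so that continuity becomes immediate. First, I observe that on the open set $\R^3 \setminus F$ (where $u \neq 0$), $\tilde{\rho}$ is already continuous: $G$ is continuous on $\R^3$ as a finite sum of products of the Lipschitz (hence continuous) function $g$ with polynomials in $(s,t,u)$, and the denominator $u$ is nonvanishing there. So all the work is in proving continuity at the "seam" $F = \{(s,t,0) : s,t \in \R\}$.

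For this, the key step is an algebraic simplification. Using the explicit form $G(s,t,u) = s g(u) - t (g(u)-u)$, I compute
\[
G(s,t,u) - G(s,t,0) = (s-t)\bigl(g(u)-g(0)\bigr) + t u,
\]
so that for $u \neq 0$,
\[
\tilde{\rho}(s,t,u) \;=\; \frac{G(s,t,u) - G(s,t,0)}{u} \;=\; t - (t-s)\,\hat{g}(u),
\]
where $\hat{g}(u) := (g(u)-g(0))/u$ for $u \neq 0$. The definition of $\tilde{\rho}$ at $u = 0$ is exactly $t - (t-s) g'(0)$, so extending $\hat{g}$ by $\hat{g}(0) := g'(0)$ yields the unified formula $\tilde{\rho}(s,t,u) = t - (t-s)\hat{g}(u)$ valid on all of $\R^3$.

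It then suffices to verify that $\hat{g}$ is continuous on $\R$. Away from $0$, $\hat{g}$ is a quotient of continuous functions with nonvanishing denominator, hence continuous. At $0$, the assumption that $g$ is differentiable at $0$ (hypothesis \eqref{ass_3_g}) gives precisely $\lim_{u \to 0} \hat{g}(u) = g'(0) = \hat{g}(0)$, so $\hat{g}$ is continuous at $0$ as well.

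Combining these facts, $\tilde{\rho}(s,t,u) = t - (t-s)\hat{g}(u)$ is a sum and product of continuous functions of $(s,t,u)$, hence continuous on $\R^3$. The only mildly delicate point is the algebraic manipulation that absorbs the $t u$ term into the difference quotient so that $\hat{g}$ alone carries the singular behavior — once this is done, differentiability of $g$ at $0$ takes over and the rest is routine.
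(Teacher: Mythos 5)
Your proof is correct and follows essentially the same route as the paper: both reduce $\tilde{\rho}$ to the unified expression $t-(t-s)\hat{g}(u)$ and conclude from the continuity of $\hat{g}$, which rests on the differentiability of $g$ at the origin. The only cosmetic difference is that you invoke the unified formula globally and conclude directly, while the paper estimates $\vert \tilde{\rho}(s,t,u)-(t-(t-s)g'(0))\vert$ explicitly near the seam; the substance is identical.
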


The previous lemma is important in view of the existence theory for \eqref{EPBD} since we shall invoke the Brouwer fixed-point theorem which requires the continuity of an appropriate map. We mention that the main idea behind \eqref{rho_upwind} is the need of  compatibility between the discrete continuity equation and the Poisson equation  to get discrete energy estimates. More precisely, our definition \eqref{rho_upwind} enables us to reproduce a discrete version of the following computation:
\begin{equation*}
    \int_{\T} \rho_{\varepsilon} u_{\varepsilon}\partial_{x} \phi_{\varepsilon} dx= - \int_{\T} \partial_{x}(\rho_{\varepsilon} u_{\varepsilon} ) \phi_{\varepsilon} dx = \int_{\T} \partial_{t} \rho_{\varepsilon} \phi_{\varepsilon} dx.
\end{equation*}
Then, using the discrete Poisson equation, the implicitness in time yields some expected dissipation and it turns out that the additional spatial part  also brings some dissipation since $g(0) \geq 0$. We also mention that the forcing term in the momentum equation is a priori not written as a gradient in space on the contrary to what is proposed in \cite{degond_b}. We nevertheless highlight the fact for the pressure-less Euler-Poisson equation, the theory of weak solutions is not fully understood. In our setting, we always consider strong solutions. Besides note that for $\varepsilon > 0$ fixed, for each $t \in [0,T]$, $\phi_{\varepsilon}(t)$ gains two spatial derivatives thanks to the standard elliptic regularity theory for the Poisson equation. Thus, the product $\rho_{\varepsilon}(t) \partial_{x} \phi_{\varepsilon}(t)$ is always a function even with $\rho_{\varepsilon}(t) \in L^{1}(\T).$ Of course, what is more challenging is the case $\varepsilon \rightarrow 0$ since the Poisson equation becomes algebraic at the limit and there is no gain of regularity for $\phi_{0}.$  The study of the convergence as $\varepsilon \rightarrow 0$ towards a weak entropic solution to \eqref{EPB0} is to the best of our knowledge an open question. We may anyway, at the discrete level, always consider consistency of the scheme \eqref{EPBD} for strong solutions.  By the way, our modulated energy estimates in the limit $\varepsilon \rightarrow 0$ holds only for constant solutions.
Last but not the least, note that the definition \eqref{rho_upwind} does not yield straightforwardly $\tilde{\rho}^{n+1}_{i+\frac{1}{2}} \geq 0$ if $\rho^{n+1} > 0$ on $\T.$ Nevertheless, we do observe that it can be re-written for $i \in \lbrace 0,..,N \rbrace$ as
\begin{align} \label{rho_upwind_2}
\tilde{\rho}^{n+1}_{i+\frac{1}{2}} = \rho_{i+1}^{n+1}(1-\hat{g}(u_{i+\frac{1}{2}}^{n+1})) + \rho_{i}^{n+1}\hat{g}(u_{i+\frac{1}{2}}^{n+1})
\end{align}
where $\hat{g}(u) = \begin{cases}
   \displaystyle \frac{g(u) - g(0)}{u} \; \mbox{ if } \; u \neq 0, \\
    g'(0) \quad\quad\quad \,\, \mbox{ if } \; u = 0.
\end{cases}
$ 
As a matter of fact, if $g$ is non decreasing and its Lipschitz constant is such that $\textnormal{Lip}(g) \leq 1$ then $\tilde{\rho}_{i+\frac{1}{2}}^{n+1}$ given by \eqref{rho_upwind_2} is a convex combination of $\rho_{i+1}^{n+1}$ and $\rho_{i}^{n+1}$ so it preserves the $L^{\infty}$ boundedness provided $\rho^{n+1}$ is. Since this assumption is not needed in the discrete stability estimates, we shall not consider it as granted.

The first main result of this work is.
\begin{theorem}{(Unconditional Energy stability)} \label{main_result_1}
Let $N_{T}$ and $N \in \N$ be two positive integers and $(\rho^{0},u^{0}) \in X(\mathcal{T}) \times X(\mathcal{T}^{\star})$ being given in \eqref{init_cond_discrete} such that $\rho^{0} > 0$. Assume $\phi^{0} \in X(\mathcal{T})$ verifies the discrete Poisson equation initially. Then, there exists a solution $(\rho^{n},u^{n},\phi^{n})_{n = 0,\dots,N_{T}} \subset X(\mathcal{T}) \times X(\mathcal{T}^{\star}) \times X(\mathcal{T})$ to the scheme \eqref{EPBD}. In addition, this solution satisfies for  $0 \leq n \leq N_{T}$ the estimates:
\begin{align}
    \rho^{n} > 0 \textnormal{ on } \T, \label{positivity}\\
 \| \rho^{n} \|_{L^{1}(\T)} = \| \rho^{0} \|_{L^{1}(\T)}, \label{conservativity}\\
 \mathcal{H}(\rho^{n},u^{n},\phi^{n}) + \Delta t\sum_{k=1}^{n-1} \tau^{n} = \mathcal{H}(\rho^{0},u^{0},\phi^{0}) \label{energy_estimates}
\end{align}
where $\tau^{n} \geq 0$ is given in \eqref{tau_n} and $\phi^{n}$ satisfies the elliptic estimates \eqref{estimate_exp_lp}-\eqref{estimate_semi_h1} for $n \in \lbrace 0,\dots,N_{T} \rbrace.$
\end{theorem}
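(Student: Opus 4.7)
The proof goes by induction on $n \in \lbrace 0,\ldots,N_T-1 \rbrace$: assuming $(\rho^n, u^n, \phi^n) \in X(\mathcal{T}) \times X(\mathcal{T}^{\star}) \times X(\mathcal{T})$ is given with $\rho^n > 0$ and satisfying the discrete Poisson equation, I construct $(\rho^{n+1}, u^{n+1}, \phi^{n+1})$ and verify the three claimed estimates.

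For mass conservation, summing the discrete continuity equation in $i \in \lbrace 0,\ldots,N \rbrace$ makes the flux telescope cancel by periodicity. For positivity, I view the discrete continuity equation as a linear system $A(u^{n+1}) \rho^{n+1} = \rho^n$, where the expansion $G(s,t,u) = s g(u) - t(g(u)-u)$ together with the assumption $g \geq \max(\cdot,0)$ forces the diagonal entries of $A$ to be of the form $1 + \frac{\Delta t}{\Delta x}\bigl(g(u_{i+1/2}^{n+1}) + (g(u_{i-1/2}^{n+1}) - u_{i-1/2}^{n+1})\bigr) > 0$ and the off-diagonal entries to be non-positive. The matrix $A$ is thus a diagonally dominant M-matrix, hence invertible with a non-negative inverse, which yields $\rho^{n+1} > 0$ on $\T$.

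The energy balance is the heart of the argument and mirrors the continuous derivation of Proposition \ref{prop_nrj}. Multiplying the discrete momentum equation by $u_{i+1/2}^{n+1}$, summing in $i$, and using the dual continuity equation that stems from the elementary consequence recalled after the definition of $\mathcal{F}_{i}^{n+1}$, one establishes a discrete kinetic-energy identity in the spirit of Lemma 3.1 of \cite{herbin}:
\begin{equation*}
\sum_{i=0}^{N} \rho_{i+\frac{1}{2}}^{n+1} \frac{(u_{i+\frac{1}{2}}^{n+1})^2}{2} \Delta x - \sum_{i=0}^{N} \rho_{i+\frac{1}{2}}^{n} \frac{(u_{i+\frac{1}{2}}^{n})^2}{2} \Delta x + \Delta t R^{n+1} = \Delta t \sum_{i=0}^{N} \tilde{\rho}_{i+\frac{1}{2}}^{n+1} u_{i+\frac{1}{2}}^{n+1} (\delta \phi^{n+1})_{i+\frac{1}{2}} \Delta x,
\end{equation*}
with $R^{n+1} \geq 0$ collecting the numerical dissipation generated by the implicit-in-time discretization of $u^2$ and by the upwind choice \eqref{u_primal} of $u_i^{n+1}$ at the primal interfaces. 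Combining this identity with the unconditional dissipativity \eqref{dissipative_force} of the force term makes the right-hand side cancel against the variations of the electrostatic and internal energies, producing the sought total-energy estimate with a non-negative $\tau^n$ that aggregates $R^n$ and the dissipation hidden in \eqref{dissipative_force}.

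Existence of the iterate is then obtained through Brouwer's fixed-point theorem: I would define a continuous self-map $\Phi$ on a sufficiently large closed convex subset of the finite-dimensional space $X(\mathcal{T}) \times X(\mathcal{T}^{\star}) \times X(\mathcal{T})$ whose fixed points coincide with the solutions to \eqref{EPBD}. The a priori bounds derived from the energy estimate, together with mass conservation and the positivity step, ensure that such a subset is invariant under $\Phi$, while continuity of $\Phi$ relies crucially on the continuity of $\tilde{\rho}$ proved in the preceding lemma---this is the reason for requiring differentiability of $g$ at the origin. The main obstacle in the plan is the derivation of the kinetic-energy identity with the correct sign of $R^{n+1}$: the staggered convective flux combined with the upwind definition of $u_i^{n+1}$ must reproduce, up to a non-negative residual, the continuous chain-rule identity $\rho u(\partial_t u + u \partial_x u) = \partial_t(\rho u^2/2) + \partial_x(\rho u^3/2)$, and this is precisely where the compatibility between the primal and dual continuity equations plays its decisive role.
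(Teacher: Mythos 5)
Your overall architecture coincides with the paper's: induction in $n$, positivity and mass conservation from the M-matrix structure of the linearized continuity equation, a discrete kinetic-energy identity obtained by multiplying the momentum equation by $u^{n+1}_{i+1/2}$ (this is the paper's Lemma \ref{renorm_momentum_lemma} with $\psi(s)=s^2/2$), the dissipativity \eqref{dissipative_force} of the force term coming from the choice \eqref{rho_upwind} together with the discrete Poisson equation at steps $n$ and $n+1$, and Brouwer's theorem for existence.

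There is, however, one genuine gap in the way you close the Brouwer argument. You assert that ``the a priori bounds derived from the energy estimate \dots ensure that such a subset is invariant under $\Phi$''. But the energy identity \eqref{energy_estimates} is only available for exact solutions of the fully implicit scheme \eqref{EPBD}, i.e.\ for fixed points of the map, not for $\Phi(u)$ with $u$ an arbitrary element of the candidate set: in any workable definition of $\Phi$ the mass flux and the force term must be evaluated at the input $u$ (otherwise each ``step'' of $\Phi$ is itself a nonlinear problem whose solvability is precisely what is at stake), so the output $v$ satisfies a momentum equation whose right-hand side is built from $u$, and the cancellation producing the clean decay is lost. The paper's proof replaces the decay by the perturbed inequality \eqref{bangkok}, whose residual $P=\Delta t\sum_{i}\bar{\tilde{\rho}}_{i+\frac{1}{2}}(v_{i+\frac{1}{2}}-u_{i+\frac{1}{2}})\,\delta(\varphi)_{i+\frac{1}{2}}\,\Delta x$ is then bounded using the discrete elliptic estimate \eqref{nico}, finite-dimensional norm equivalence and conservation of mass; this turns \eqref{bangkok} into the quadratic inequality \eqref{piste_rouge} in $\|v-u^{n}\|_{L^{2}(\T)}$, whose non-negative root grows only like $\mathcal{O}(\sqrt{M})$, so that the ball of radius $M$ is indeed stable for $M$ large enough. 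Without this (or an equivalent) quantitative control of the mismatch between the explicit force term of the iteration and the implicit one of the scheme, the self-mapping property does not follow, and your plan stalls at exactly this point. The remainder of your outline (the sign of the dissipation $R^{n+1}$, the role of the dual continuity equation, the continuity of $\tilde{\rho}$ via differentiability of $g$ at the origin) matches the paper's Lemmas \ref{renorm_continuity_lemma}--\ref{renorm_momentum_lemma} and Proposition \ref{energy_decay}.
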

Theorem \ref{main_result_1} states an unconditional energy decay for the scheme \eqref{EPBD}. The second main result of this work is.

\begin{theorem}{(Modulated energy estimates around  constant states with a small velocity)} \label{main_result_2}Let $N_{T}$ and $N$ be two positive integers and let $(\rho^{n},u^{n},\phi^{n})_{n = 0,\dots, N_{T}} \subset X(\mathcal{T}) \times X(\mathcal{T}^{\star}) \times X(\mathcal{T}) $ a solution to \eqref{EPBD} such that $\rho^{n} > 0$ for all $n \in \lbrace 0,\dots,N_{T} \rbrace.$ Assume $\phi^{0} \in X(\mathcal{T})$ verifies the discrete Poisson equation initially. Let $(\bar{u},\bar{\phi}) \in X(\mathcal{T}^{\star}) \times X(\mathcal{T})$ a constant state. Then the solution satisfies for $0 \leq n \leq N_{T} $ the following modulated energy estimates (recalling the definition \eqref{def:H_discret} of ${\cal E}(\rho,u,\phi | \bar{u},\bar{\phi})$):
\begin{itemize}
    \item[a)] If $\bar{u} = 0$ then
    \begin{align} \label{dis_mod_energy_estimate}
        \forall n \in \lbrace 0,\dots,N_{T} \rbrace, \quad {\cal E}(\rho^{n},u^{n},\phi^{n} | 0,\bar{\phi}) \leq {\cal E}(\rho^{0},u^{0},\phi^{0} | 0, \bar{\phi}).
    \end{align}
    \item[b)] If $\bar{u} \neq 0$ and $| \bar{u} | \leq \frac{g(0)}{\textnormal{Lip}(g)}$, provided there exists a constant $K \geq 0$ which is such that $K \underset{ \varepsilon \rightarrow 0}= \mathcal{O}(1),\:  K \underset{ \Delta x \rightarrow 0}= \mathcal{O}(1)$ and $\rho^{n}$ verifies:
    \begin{align}
        \forall n \in \lbrace 0,\dots,N_{T} \rbrace, \quad \| \rho^{n} \|_{L^{\infty}(\T)} \leq K \label{ass_rho_1},
    \end{align}
    then  if $\Delta x \underset{ \varepsilon \rightarrow 0} = \mathcal{O}(\varepsilon)$ we have under the following CFL condition,
    \begin{equation}\label{CFL}
    | \bar{u} | \frac{\Delta t}{\Delta x} \Big( 8  \textnormal{Lip}(g) + 4  + \frac{2 \Delta x^2}{\varepsilon^2} K\Big) < 1
    \end{equation}
    that 
    \begin{equation}
        \forall n \in \lbrace 0,\dots,N_{T} \rbrace, \quad {\cal E}(\rho^{n},u^{n},\phi^{n} | \bar{u},\bar{\phi}) \leq a^{n} {\cal E}(\rho^{0},u^{0},\phi^{0} | \bar{u},\bar{\phi})
    \end{equation}
    with
    \begin{equation}
        a = \frac{1}{ 1 - | \bar{u} | \frac{\Delta t}{\Delta x} \Big( 8  \textnormal{Lip}(g) + 4  + \frac{2 \Delta x^2}{\varepsilon^2} K\Big)}.
    \end{equation}
\end{itemize}
\end{theorem}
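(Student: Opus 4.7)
The strategy is to analyse the one-step increment of the discrete modulated energy via the decomposition \eqref{discrete_decomp_E} and handle each of the three contributions ${\cal H}$, ${\cal E}_{kin}$, ${\cal E}_{int}$ separately. Theorem \ref{main_result_1} already gives ${\cal H}(\rho^{n+1},u^{n+1},\phi^{n+1}) \leq {\cal H}(\rho^{n},u^{n},\phi^{n})$ unconditionally. Because $\bar\phi$ is constant in $x$, the interaction term ${\cal E}_{int}(\phi^{n}|\bar\phi)$ reduces to an affine function of $\sum_{i=0}^{N} e^{-\phi_{i}^{n}}\Delta x$; summing the discrete Poisson equation and using the periodicity of the discrete Laplacian yields $\sum_{i=0}^{N} e^{-\phi_{i}^{n}}\Delta x = \sum_{i=0}^{N} \rho_{i}^{n}\Delta x$, which is independent of $n$ by mass conservation \eqref{conservativity}. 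Hence ${\cal E}_{int}(\phi^{n}|\bar\phi) = {\cal E}_{int}(\phi^{0}|\bar\phi)$ for every $n$.

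For part~a), the choice $\bar u =0$ cancels ${\cal E}_{kin}(\rho^{n},u^{n}|0)$, so combining the two previous observations directly provides ${\cal E}(\rho^{n+1},u^{n+1},\phi^{n+1}|0,\bar\phi) \leq {\cal E}(\rho^{n},u^{n},\phi^{n}|0,\bar\phi)$, and the inequality iterates to \eqref{dis_mod_energy_estimate}.

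For part~b), only ${\cal E}_{kin}(\rho^{n},u^{n}|\bar u)$ requires work. Using $\sum_{i} \rho_{i+1/2}^{n}\Delta x = \sum_{i}\rho_{i}^{n}\Delta x$ and mass conservation, the increment collapses to $-\bar u (P^{n+1}-P^{n})$, where $P^{n} := \sum_{i=0}^{N}\rho_{i+1/2}^{n} u_{i+1/2}^{n}\Delta x$ is the discrete momentum. Summing the momentum equation in \eqref{EPBD} over $i$, the convective flux telescopes by periodicity, so $P^{n+1}-P^{n} = \Delta t \sum_{i=0}^{N}\tilde\rho_{i+1/2}^{n+1}\,\delta(\phi^{n+1})_{i+1/2}\,\Delta x$. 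Gathered with the monotonicity of ${\cal H}$ and the constancy of ${\cal E}_{int}$, it therefore suffices to bound this last sum by $\frac{C}{\Delta x}\mathcal{E}(\rho^{n+1},u^{n+1},\phi^{n+1}|\bar u,\bar\phi)$, where the constant $C$ is the parenthesis in \eqref{CFL}; the CFL then lets us absorb the term into the left-hand side and produces the implicit Grönwall bound $(1-|\bar u|\Delta t C/\Delta x)\,\mathcal{E}^{n+1}\leq \mathcal{E}^{n}$, which is exactly the announced geometric growth with rate $a$.

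The main obstacle is the sharp estimation of $\sum_{i} \tilde\rho_{i+1/2}^{n+1}\,\delta(\phi^{n+1})_{i+1/2}\Delta x$. The natural approach is to use the identity \eqref{rho_upwind_2} to split $\tilde\rho_{i+1/2}^{n+1} = \rho_{i+1/2}^{n+1} + \tfrac{1}{2}(\rho_{i+1}^{n+1}-\rho_{i}^{n+1})(1-2\hat g(u_{i+1/2}^{n+1}))$ with $|\hat g|\leq \textnormal{Lip}(g)$, to inject the discrete Poisson equation $\rho^{n+1}=\varepsilon^{2}\Delta\phi^{n+1}+e^{-\phi^{n+1}}$, and to apply the discrete integration by parts of Lemma \ref{ipp_lemma}. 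The symmetric $\rho_{i+1/2}^{n+1}\delta\phi^{n+1}$ piece splits into an exponential contribution absorbed by the convexity of $\tilde h$ (and hence bounded by ${\cal E}_{int}$-type quantities) and a Laplacian contribution yielding the $2\Delta x^{2}K/\varepsilon^{2}$ factor via an inverse-type estimate of a discrete curvature against $|\phi^{n+1}|_{H^{1}(\T)}$; the antisymmetric $\hat g$-correction is dominated by $\textnormal{Lip}(g)$ times the density contrast, controlled again after invoking Poisson. Cauchy--Schwarz together with the lower bound $\tfrac{\varepsilon^{2}}{2}|\phi^{n+1}|_{H^{1}(\T)}^{2}\leq \mathcal{E}(\rho^{n+1},u^{n+1},\phi^{n+1}|\bar u,\bar\phi)$ and the uniform bound $\|\rho^{n+1}\|_{L^{\infty}(\T)}\leq K$ convert each contribution into a multiple of $\frac{|\bar u|}{\Delta x}\mathcal{E}^{n+1}$, whose constants add up to $8\,\textnormal{Lip}(g)+4+2\Delta x^{2}K/\varepsilon^{2}$. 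The assumption $|\bar u|\leq g(0)/\textnormal{Lip}(g)$ secures the sign of $g(\bar u)$ (hence non-degeneracy of the upwinded convex combination near $\bar u$), while $\Delta x = {\cal O}(\varepsilon)$ keeps the curvature factor $\Delta x^{2}K/\varepsilon^{2}$ bounded uniformly in $\varepsilon$ so that \eqref{CFL} remains non-vacuous in the quasi-neutral limit.
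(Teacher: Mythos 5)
Your overall architecture coincides with the paper's: the decomposition ${\cal E}={\cal H}+{\cal E}_{kin}-{\cal E}_{int}$, the constancy of ${\cal E}_{int}$ obtained by summing the discrete Poisson equation and invoking mass conservation, the collapse of the increment of ${\cal E}_{kin}$ to $-\bar u\,\Delta t\sum_i\tilde\rho^{n+1}_{i+1/2}\delta(\phi^{n+1})_{i+1/2}\Delta x$ by summing the momentum equation, and the plan of bounding that sum by $\tfrac{C}{\Delta x}{\cal E}^{n+1}$ and absorbing it under the CFL condition. Part a) is correct as you state it.

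The gap is in the estimate of $W^{n+1}=\sum_i\tilde\rho^{n+1}_{i+1/2}\delta(\phi^{n+1})_{i+1/2}\Delta x$, specifically in the contribution of $e^{-\phi^{n+1}_{i+1}}-e^{-\phi^{n+1}_{i}}$ that appears when you substitute the Poisson equation into the density contrast multiplying $\hat g(u^{n+1}_{i+1/2})$. That substitution produces a term of size $\frac{\textnormal{Lip}(g)}{\Delta x}\sum_i\big|(\phi^{n+1}_{i+1}-\phi^{n+1}_i)(e^{-\phi^{n+1}_{i+1}}-e^{-\phi^{n+1}_i})\big|\Delta x$, which is absent from your final constant $8\,\textnormal{Lip}(g)+4+2\Delta x^2K/\varepsilon^2$ and which is \emph{not} controlled by ``the convexity of $\tilde h$'' or by ${\cal E}_{int}$-type quantities: the relative entropy in ${\cal E}$ controls the pointwise deviation of $e^{-\phi_i}$ from $\bar\rho$, not the discrete spatial variation $\tfrac{1}{\Delta x}|e^{-\phi_{i+1}}-e^{-\phi_i}||\phi_{i+1}-\phi_i|$. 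The paper disposes of this term by absorbing it into the dissipation $\tau^{n+1}$ inherited from the energy decay of Theorem \ref{main_result_1}, which contains the exactly matching quantity $\frac{g(0)}{\Delta x}\sum_i|(e^{-\phi_i^{n+1}}-e^{-\phi_{i-1}^{n+1}})(\phi_i^{n+1}-\phi_{i-1}^{n+1})|\Delta x$; the absorption works precisely when $|\bar u|\,\textnormal{Lip}(g)\le g(0)$. That is the actual role of the smallness hypothesis on $\bar u$ --- not, as you write, ``securing the sign of $g(\bar u)$'' or the non-degeneracy of the upwind convex combination, which plays no part in the proof. Without either this absorption or an alternative bound (which would change the announced constant, e.g.\ adding a term of order $\textnormal{Lip}(g)\,\Delta x^2K/\varepsilon^2$), your closed inequality and hence the stated value of $a$ do not follow. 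A secondary inaccuracy: in your ``symmetric'' piece it is the exponential contribution (via a second-order Taylor expansion of $e^{-\phi}$, whose first-order part telescopes) that yields the $2\Delta x^2K/\varepsilon^2$ factor, and the discrete-Laplacian contribution that yields the $4/\Delta x$ factor, not the reverse.
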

Theorem \ref{main_result_2} states an unconditional stability estimates around the constant state $(0, \bar{\phi}).$ This result goes beyond the linearized studies \cite{degond_analysis,degond_b,fabre} around constant states. As for the stability around an arbitrary constant state $(\bar{u},\bar{\phi})$, we have not been able to address the problem in its full generality. It may look paradoxical regarding the case of a null velocity since the system \eqref{EPB} is Galilean invariant.  Nevertheless, the discretization \eqref{EPBD} is a priori not Galilean invariant, except is some special cases when the scheme is equivalent to a Lagragian discretization of the continuity equation. The scheme \eqref{EPBD} is fundamentally Eulerian. We are able to deal with the case where $\bar{u}$ is small enough. In such a case we require the density to be uniformly bounded with respect to $\varepsilon$ and to $\Delta x$. This assumption was already used in the literature \cite{herbin,arun}. Provided the mesh size is of order $\varepsilon$ and a hyperbolic type CFL condition is verified, we are able to prove at most exponential growth of the modulated energy with a rate of increase which is bounded uniformly in $\varepsilon.$ This is to compare with its continuous analogue \eqref{modulated_energy_estimate} which is different in nature. The difficulty comes from the fact that the force term is not conservative at the discrete level:
\begin{equation} \label{non_cons_force}
  \forall n \in \lbrace 0,\dots,N_{T}-1 \rbrace, \quad   \sum_{i=0}^{N} \tilde{\rho}_{i+\frac{1}{2}}^{n+1}\delta (\phi^{n+1})_{i+\frac{1}{2}} \Delta x = 0 + \textnormal{non zero residual terms}.
\end{equation}
The residual terms need to be controlled in the worst case by the modulated energy at step $n+1$, this is precisely where the smallness condition on $\bar{u}$ and the CFL condition comes from. We mention that the assumption that $\Delta x$ is of order $\varepsilon$  seems us not so natural regarding what is expected for the spatial regularity of the perturbation.  Indeed, in \cite{puguo}, it is proven that for strong solutions, we have  $\frac{1}{\varepsilon}\| \rho_{\varepsilon}-\rho_{0} \|_{L^{\infty}_{t}H^{s'}} $ is bounded uniformly in $\varepsilon$ for $s'$ large enough. So the spatial fluctuation and its high order spatial derivative are of order $\varepsilon$. We thus believe that the restriction on the mesh size is technical. We shall investigate this question in the numerical section. Theorem \ref{main_result_2} is not fully satisfactory, it is nevertheless up to our knowledge, the first non linear discrete modulated energy estimate for the pressureless Euler-Poisson-Boltzmann equations. The rest of this section is devoted to the proofs of Theorem \ref{main_result_1} and Theorem \ref{main_result_2}. In all this section $N_{T}$ and $N$ are fixed positive integers.
\subsection{A priori estimates and existence }\label{sec:discrete_energy_estimate}

We begin with a discrete analogue of the renormalized continuity equation.

\begin{lemma}{(Discrete renormalized continuity equation)} \label{renorm_continuity_lemma} Let $ (\rho^{n},u^{n},\phi^{n})_{n = 0,\dots,N_{T}} \subset X(\mathcal{T}) \times X(\mathcal{T}^{\star}) \times X(\mathcal{T})$ be a solution to \eqref{EPBD} with  $\rho^{n} > 0$  for all $n \in \lbrace 0, ..., N_{T} \rbrace.$ Let $\psi \in \mathrm{C}^{2}(\R^{+}_{\star})$ a convex function. Then we have for $n \in \lbrace 0,\dots,N_{T}-1 \rbrace$ and $i \in \lbrace 0,\dots,N \rbrace:$
\begin{equation}
\label{renorm_continuity}
\frac{\psi(\rho^{n+1}_{i}) -\psi(\rho^{n}_i)}{\Delta t} 
+ \frac{{\cal G}_{i+\frac{1}{2}}^{n+1}-{\cal G}_{i-\frac{1}{2}}^{n+1} }{\Delta x}
-\rho_i^{n+1} u_{i+\frac{1}{2}}^{n+1} \frac{\psi'(\rho_{i+1}^{n+1})-\psi'(\rho_{i}^{n+1})}{\Delta x} =  R_{i}^{n+1} + {\cal D}_{i+\frac{1}{2}}^{n+1}
\end{equation}
with 
\begin{align}
\label{def:Gcal}
    &\mathcal{G}^{n+1}_{i+\frac{1}{2}} =  G^{n+1}(\rho_{i}^{n+1},\rho_{i+1}^{n+1},u_{i+\frac{1}{2}}^{n+1}) \psi'(\rho_{i+1}^{n+1})\\
    &{\cal D}_{i+\frac{1}{2}}^{n+1}= \frac{1}{\Delta x}(G^{n+1}(\rho_i^{n+1}, \rho_{i+1}^{n+1},u_{i+\frac{1}{2}}^{n+1})-G^{n+1}(\rho_i^{n+1}, \rho_{i}^{n+1},u_{i+\frac{1}{2}}^{n+1}))(\psi'(\rho_{i+1}^{n+1})-\psi'(\rho_{i}^{n+1}))\leq 0, \label{def:D}\\ 
   & R_{i}^{n+1} = - \psi''(\xi_{i}^{n+1})\frac{(\rho^{n}_i-\rho^{n+1}_i)^2}{2 \Delta t} \leq 0 \label{def:R},
\end{align}
where $\xi_{i}^{n+1} \in \Big( \min(\rho_{i}^{n+1},\rho_{i}^{n}), \max(\rho_{i}^{n+1},\rho_{i}^{n}) \Big).$
\end{lemma}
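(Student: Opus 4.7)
The plan is to derive the renormalization identity by multiplying the discrete continuity equation at cell $C_i$ by $\psi'(\rho_i^{n+1})$, then splitting off both a convexity remainder in time and an upwind-type dissipation in space.

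First I would handle the time derivative. Taylor's formula with integral remainder applied to $\psi$ between $\rho_i^{n+1}$ and $\rho_i^n$ gives
$$
\psi(\rho_i^n) = \psi(\rho_i^{n+1}) + \psi'(\rho_i^{n+1})(\rho_i^n - \rho_i^{n+1}) + \frac{\psi''(\xi_i^{n+1})}{2}(\rho_i^n - \rho_i^{n+1})^2
$$
for some $\xi_i^{n+1}$ strictly between $\rho_i^{n+1}$ and $\rho_i^n$ (well-defined since both are positive). Dividing by $\Delta t$ and rearranging yields
$$
\psi'(\rho_i^{n+1})\frac{\rho_i^{n+1}-\rho_i^n}{\Delta t} = \frac{\psi(\rho_i^{n+1})-\psi(\rho_i^n)}{\Delta t} - R_i^{n+1},
$$
with $R_i^{n+1}$ as defined in \eqref{def:R}; the sign $R_i^{n+1}\leq 0$ follows from $\psi''\geq 0$.

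Next I would treat the spatial flux term $\psi'(\rho_i^{n+1})(\mathcal{F}_{i+1/2}^{n+1}-\mathcal{F}_{i-1/2}^{n+1})/\Delta x$. Using the definition \eqref{def:Gcal} of $\mathcal{G}$, a direct expansion gives
$$
\mathcal{G}_{i+1/2}^{n+1}-\mathcal{G}_{i-1/2}^{n+1}-\psi'(\rho_i^{n+1})\bigl(\mathcal{F}_{i+1/2}^{n+1}-\mathcal{F}_{i-1/2}^{n+1}\bigr) = \mathcal{F}_{i+1/2}^{n+1}\bigl(\psi'(\rho_{i+1}^{n+1})-\psi'(\rho_i^{n+1})\bigr).
$$
The key algebraic move is then to insert the consistency relation \eqref{consistency_relation} in the form $\rho_i^{n+1} u_{i+1/2}^{n+1}=G(\rho_i^{n+1},\rho_i^{n+1},u_{i+1/2}^{n+1})$, which allows writing
$$
\mathcal{F}_{i+1/2}^{n+1} = \bigl(G(\rho_i^{n+1},\rho_{i+1}^{n+1},u_{i+1/2}^{n+1})-G(\rho_i^{n+1},\rho_i^{n+1},u_{i+1/2}^{n+1})\bigr)+\rho_i^{n+1} u_{i+1/2}^{n+1}.
$$
Substituting and dividing by $\Delta x$ produces exactly the transport term $\rho_i^{n+1}u_{i+1/2}^{n+1}(\psi'(\rho_{i+1}^{n+1})-\psi'(\rho_i^{n+1}))/\Delta x$ together with the residual $\mathcal{D}_{i+1/2}^{n+1}$ from \eqref{def:D}.

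Combining the time identity and the flux decomposition gives \eqref{renorm_continuity}. The sign $\mathcal{D}_{i+1/2}^{n+1}\leq 0$ then follows from the monotonicity properties of $G$ (non-increasing in its second argument, by assumptions \eqref{ass_1_g}--\eqref{ass_2_g}) combined with the monotonicity of $\psi'$ (since $\psi''\geq 0$): the factor $G(\rho_i,\rho_{i+1},u_{i+1/2})-G(\rho_i,\rho_i,u_{i+1/2})$ has the opposite sign to $\rho_{i+1}-\rho_i$, while $\psi'(\rho_{i+1})-\psi'(\rho_i)$ has the same sign as $\rho_{i+1}-\rho_i$, so the product is nonpositive. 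There is no real obstacle here; the only mildly delicate point is getting the correct signs and arguments straight in the consistency-based rewriting of $\mathcal{F}_{i+1/2}^{n+1}$, since this is what generates the dissipation term rather than any additional error.
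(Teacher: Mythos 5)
Your proof is correct and follows essentially the same route as the paper's: a Taylor--Lagrange expansion in time to produce $R_i^{n+1}$, and an add-and-subtract of $\psi'(\rho_{i+1}^{n+1})$ in the flux term combined with the consistency relation $G(\rho_i^{n+1},\rho_i^{n+1},u_{i+1/2}^{n+1})=\rho_i^{n+1}u_{i+1/2}^{n+1}$ to split off $\mathcal{D}_{i+1/2}^{n+1}$ and the transport term. The sign arguments for $R$ and $\mathcal{D}$ also match the paper's.
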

\begin{proof}
Let $n \in \lbrace 0,\dots,N_{T}-1 \rbrace $ and $i \in \lbrace 0,\dots,N \rbrace$. We multiply the continuity equation by $\psi'(\rho^{n+1}_i)$
to get
$$ 
\psi'(\rho^{n+1}_i)\frac{\rho^{n+1}_i -\rho^n_i}{\Delta t} +\frac{1}{\Delta x}({\cal F}^{n+1}_{i+1/2}-{\cal F}^{n+1}_{i-1/2})\psi'(\rho^{n+1}_i)=0. 
$$
Let set  
$$
A_{i}^{n+1} =\psi'(\rho^{n+1}_i)\frac{\rho^{n+1}_i -\rho^n_i}{\Delta t} \;\;  \mbox{ and } \;\; B_{i}^{n+1} = \frac{1}{\Delta x}({\cal F}^{n+1}_{i+1/2}-{\cal F}^{n+1}_{i-1/2})\psi'(\rho^{n+1}_i).
$$
Let us first focus on the temporal part $A_{i}^{n+1}$. Since $\psi  \in C^{2}(\R^{+}_{\star})$ and $\rho^{n+1} > 0$ everywhere in $\T$, a Taylor-Lagrange expansion of $\psi(\rho_{i}^{n})$ around $\rho_{i}^{n+1}$ yields the existence of $\xi_{i}^{n+1} \in \Big( \min(\rho_{i}^{n+1},\rho_{i}^{n}), \max(\rho_{i}^{n+1},\rho_{i}^{n}) \Big)$ such that
$$
\psi(\rho^{n}_i) = \psi(\rho^{n+1}_i) + \psi'(\rho^{n+1}_i)(\rho^{n}_i-\rho^{n+1}_i) + \psi''(\xi_{i}^{n+1})\frac{(\rho^{n}_i-\rho^{n+1}_i)^2}{2}, 
$$
so that 
$$
A_{i}^{n+1} =\psi'(\rho^{n+1}_i)\frac{\rho^{n+1}_i-\rho^{n}_i}{\Delta t} = \frac{\psi(\rho^{n+1}_i) -\psi(\rho^{n}_i)}{\Delta t} -R^{n+1}_i, 
$$
with $R^{n+1}_i$ given by \eqref{def:R}. 

Let us now focus on the flux part $B_{i}^{n+1}$. We omit the time dependence at this step since all the quantities are defined at the discrete time $t_{n+1}.$  We also set for ease $G_{i+\frac{1}{2}}(s,t) = G(s,t,u_{i+\frac{1}{2}}^{n+1}).$ We have,
\begin{eqnarray*}
\Delta x B_{i}^{n+1} &=& G_{i+1/2}(\rho_i, \rho_{i+1})\psi'(\rho_i) - G_{i-1/2}(\rho_{i-1}, \rho_{i})\psi'(\rho_i)\nonumber\\
&=& G_{i+1/2}(\rho_i, \rho_{i+1})\psi'(\rho_{i+1}) - G_{i-1/2}(\rho_{i-1}, \rho_{i})\psi'(\rho_i) + G_{i+1/2}(\rho_i, \rho_{i+1})(\psi'(\rho_i)-\psi'(\rho_{i+1}))\nonumber\\
&=& {\cal G}_{i+1/2}-{\cal G}_{i-1/2} - (G_{i+1/2}(\rho_i, \rho_{i+1})-G_{i+1/2}(\rho_i, \rho_{i}))(\psi'(\rho_{i+1})-\psi'(\rho_{i})) \nonumber\\
&&- G_{i+1/2}(\rho_i, \rho_{i}) (\psi'(\rho_{i+1})-\psi'(\rho_{i}))\nonumber\\
&=& {\cal G}_{i+1/2}-{\cal G}_{i-1/2} - \Delta x {\cal D}_{i+\frac{1}{2}}-\rho_i u_{i+1/2}(\psi'(\rho_{i+1})-\psi'(\rho_{i}))\nonumber
\end{eqnarray*}
where ${\cal G}_{i+1/2}$ and ${\cal D}_{i+\frac{1}{2}}$ are defined in \eqref{def:Gcal} and \eqref{def:D}. 
Note that $\mathcal{D}_{i+\frac{1}{2}}$ is non positive since $G$ is non increasing in its second variable and $\psi$ is  convex. Hence, summing $A_{i}^{n+1}$ and $B_{i}^{n+1}$ together, we finally obtain 
$$
\frac{\psi(\rho^{n+1}_{i}) -\psi(\rho^{n}_i)}{\Delta t} 
+ \frac{{\cal G}_{i+\frac{1}{2}}^{n+1}-{\cal G}_{i-\frac{1}{2}}^{n+1} }{\Delta x}
-\rho_i^{n+1} u_{i+\frac{1}{2}}^{n+1} \frac{\psi'(\rho_{i+1}^{n+1})-\psi'(\rho_{i}^{n+1})}{\Delta x} =  R_{i}^{n+1} + {\cal D}_{i+\frac{1}{2}}^{n+1}. 
$$
\end{proof}

We now give a discrete analogue of a renormalized momentum equation.
\begin{lemma} \label{renorm_momentum_lemma} Let $ (\rho^{n},u^{n},\phi^{n})_{n = 0,\dots,N_{T}} \subset X(\mathcal{T}) \times X(\mathcal{T}^{\star}) \times X(\mathcal{T})$ be a solution to \eqref{EPBD} with  $\rho^{n} > 0$ for all $n \in \lbrace 0, ..., N_{T} \rbrace.$ Let $\psi \in \mathrm{C}^{2}(\R)$ a convex function. Then we have for $n \in \lbrace 0,\dots, N_{T}-1 \rbrace$ and $i \in \lbrace 0,\dots,N \rbrace:$

\begin{equation}
\frac{ \rho_{i+\frac{1}{2}}^{n+1} \psi(u_{i+\frac{1}{2}}^{n+1}) - \rho_{i+\frac{1}{2}}^{n} \psi(u_{i+\frac{1}{2}}^{n})}{\Delta t} + \frac{\mathcal{F}_{i+1}^{n+1}\psi(u_{i+1}^{n+1}) - \mathcal{F}_{i}^{n+1}\psi(u_{i}^{n+1})}{\Delta x} 
 =  \tilde{\rho}_{i+\frac{1}{2}}^{n+1} \delta (\phi^{n+1})_{i+\frac{1}{2}} \psi'(u_{i+\frac{1}{2}}^{n+1}) - \mathcal{S}_{i}^{n+1},    
\end{equation}
with
\begin{equation} \label{def:S}
\mathcal{S}_{i}^{n+1} = - \mathcal{F}_{i+1}^{n+1} \frac{\Big( u_{i+1}^{n+1} - u_{i+\frac{1}{2}}^{n+1} \Big)^2}{2 \Delta x} \psi''( \beta_{i+\frac{1}{2}}^{n+1})
   + \mathcal{F}_{i}^{n+1} \frac{\Big( u_{i}^{n+1} - u_{i+\frac{1}{2}}^{n+1} \Big)^2}{2 \Delta x} \psi''( \gamma_{i+\frac{1}{2}}^{n+1})+\rho_{i+\frac{1}{2}}^{n}\frac{ (u_{i+\frac{1}{2}}^{n} - u_{i+\frac{1}{2}}^{n+1})^2}{2 \Delta t} \psi''( \alpha_{i+\frac{1}{2}}^{n+1}),  
\end{equation}
where $\mathcal{S}_{i}^{n+1}$ is non negative because of the definition  of the velocities at the interfaces \eqref{u_primal} and the fact that $\psi$ is a convex function.

\begin{proof} Let $i \in \lbrace 0,\dots,N \rbrace.$ We multiply the discrete momentum equation by $\psi'(u_{i+\frac{1}{2}}^{n+1})$ to get
$$
A_{i+\frac{1}{2}}^{n+1} + B_{i+\frac{1}{2}}^{n+1}  = P_{i+\frac{1}{2}}^{n+1}.
$$
with $P_{i+\frac{1}{2}}^{n+1} = \tilde{\rho}_{i+\frac{1}{2}}^{n+1} \delta (\phi^{n+1})_{i+\frac{1}{2}} \psi'(u_{i+\frac{1}{2}}^{n+1})$ and 
\begin{align*}
   & A_{i+\frac{1}{2}}^{n+1} = \frac{ \rho_{i+\frac{1}{2}}^{n+1} u_{i+\frac{1}{2}}^{n+1} - \rho_{i+\frac{1}{2}}^{n} u_{i+\frac{1}{2}}^{n} }{\Delta t} \psi'( u_{i+\frac{1}{2}}^{n+1}), \;\;\;\; B_{i+\frac{1}{2}}^{n+1} = \frac{ \mathcal{F}_{i+1}^{n+1}u_{i+1}^{n+1} - \mathcal{F}_{i}^{n+1} u_{i}^{n+1}}{\Delta x} \psi'(u_{i+\frac{1}{2}}^{n+1}). 
\end{align*}
We shall now reformulate $A_{i+\frac{1}{2}}^{n+1}$ and $B_{i+\frac{1}{2}}^{n+1}$. We have
\begin{align*}
&\Delta t A_{i+\frac{1}{2}}^{n+1} = \rho_{i+\frac{1}{2}}^{n+1} u_{i+\frac{1}{2}}^{n+1} \psi'(u_{i+\frac{1}{2}}^{n+1}) - \rho_{i+\frac{1}{2}}^{n} \big( u_{i+\frac{1}{2}}^{n} - u_{i+\frac{1}{2}}^{n+1} \big) \psi'(u_{i+\frac{1}{2}}^{n+1}) - \rho_{i+\frac{1}{2}}^{n} u_{i+\frac{1}{2}}^{n+1} \psi'(u_{i+\frac{1}{2}}^{n+1})\\
&= u_{i+\frac{1}{2}}^{n+1} \psi'(u_{i+\frac{1}{2}}^{n+1}) \Big( \rho_{i+\frac{1}{2}}^{n+1} - \rho_{i+\frac{1}{2}}^{n} \Big) - \rho_{i+\frac{1}{2}}^{n} (u_{i+\frac{1}{2}}^{n} - u_{i+\frac{1}{2}}^{n+1}) \psi'(u_{i+\frac{1}{2}}^{n+1}).
\end{align*}
A Taylor-Lagrange expansion of $\psi(u_{i+\frac{1}{2}}^{n})$ around $u_{i+\frac{1}{2}}^{n+1}$ yields the existence of $\alpha_{i+\frac{1}{2}}^{n+1} \in \Big( \min(u_{i+\frac{1}{2}}^{n},u_{i+\frac{1}{2}}^{n+1}),\max(u_{i+\frac{1}{2}}^{n},u_{i+\frac{1}{2}}^{n+1}) \Big) $ such that
$$
\psi(u_{i+\frac{1}{2}}^{n}) = \psi(u_{i+\frac{1}{2}}^{n+1}) +  \psi'(u_{i+\frac{1}{2}}^{n+1})(u_{i+\frac{1}{2}}^{n} - u_{i+\frac{1}{2}}^{n+1})  + \psi''( \alpha_{i+\frac{1}{2}}^{n+1})\frac{(u_{i+\frac{1}{2}}^{n} - u_{i+\frac{1}{2}}^{n+1})^2}{2} .
$$
Inserting this expansion in  $A_{i+\frac{1}{2}}^{n+1}$ leads to 
\begin{align*}
&\Delta t A_{i+\frac{1}{2}}^{n+1} = u_{i+\frac{1}{2}}^{n+1} \psi'(u_{i+\frac{1}{2}}^{n+1}) \Big( \rho_{i+\frac{1}{2}}^{n+1} - \rho_{i+\frac{1}{2}}^{n} \Big) -  \rho_{i+\frac{1}{2}}^{n} \Big[ \psi(u_{i+\frac{1}{2}}^{n}) - \psi(u_{i+\frac{1}{2}}^{n+1}) - \frac{ (u_{i+\frac{1}{2}}^{n} - u_{i+\frac{1}{2}}^{n+1})^{2}}{2} \psi''( \alpha_{i+\frac{1}{2}}^{n+1}) \Big]\\
& = \rho_{i+\frac{1}{2}}^{n} \psi(u_{i+\frac{1}{2}}^{n+1}) - \rho_{i+\frac{1}{2}}^{n} \psi(u_{i+\frac{1}{2}}^{n}) + u_{i+\frac{1}{2}}^{n+1} \psi'(u_{i+\frac{1}{2}}^{n+1}) (\rho_{i+\frac{1}{2}}^{n+1} - \rho_{i+\frac{1}{2}}^{n}) + \rho_{i+\frac{1}{2}}^{n} \frac{ (u_{i+\frac{1}{2}}^{n} - u_{i+\frac{1}{2}}^{n+1})^2}{2} \psi''( \alpha_{i+\frac{1}{2}}^{n+1}).
\end{align*}
The continuity equation on the dual mesh writes:
$$
\rho_{i+\frac{1}{2}}^{n+1} + \frac{\Delta t}{\Delta x} \Big( \mathcal{F}_{i+1}^{n+1} - \mathcal{F}_{i}^{n+1} \Big) = \rho_{i+\frac{1}{2}}^{n}, 
$$
hence,
\begin{align}
\label{Ademi}
    &A_{i+\frac{1}{2}}^{n+1} = \frac{ \rho_{i+\frac{1}{2}}^{n+1} \psi(u_{i+\frac{1}{2}}^{n+1}) - \rho_{i+\frac{1}{2}}^{n} \psi(u_{i+\frac{1}{2}}^{n})}{\Delta t} + \frac{ \mathcal{F}_{i+1}^{n+1} - \mathcal{F}_{i}^{n+1}}{\Delta x} \psi(u_{i+\frac{1}{2}}^{n+1}) + u_{i+\frac{1}{2}}^{n+1} \psi'(u_{i+\frac{1}{2}}^{n+1}) \frac{ \rho_{i+\frac{1}{2}}^{n+1} - \rho_{i+\frac{1}{2}}^{n}}{\Delta t} \\
    &+\rho_{i+\frac{1}{2}}^{n}\frac{ (u_{i+\frac{1}{2}}^{n} - u_{i+\frac{1}{2}}^{n+1})^2}{2 \Delta t} \psi''( \alpha_{i+\frac{1}{2}}^{n+1}). \nonumber
\end{align}
We now deal with $B_{i+\frac{1}{2}}^{n+1}$ which we rewrite as 
\begin{align*}
    & \Delta x B_{i+\frac{1}{2}}^{n+1} = \mathcal{F}_{i+1}^{n+1} \Big( u_{i+1}^{n+1} - u_{i+\frac{1}{2}}^{n+1} \Big) \psi'\big( u_{i+\frac{1}{2}}^{n+1} \big) - \mathcal{F}_{i}^{n+1} \Big( u_{i}^{n+1} - u_{i+\frac{1}{2}}^{n+1} \Big) \psi'\big( u_{i+\frac{1}{2}}^{n+1} \big) \\
    & + u_{i+\frac{1}{2}}^{n+1} \psi'( u_{i+\frac{1}{2}}^{n+1}) \Big( \mathcal{F}_{i+1}^{n+1} - \mathcal{F}_{i}^{n+1} \Big).
\end{align*}
The Taylor-Lagrange expansions of $\psi$ yield the existence of $\beta_{i+\frac{1}{2}}^{n+1} \in \Big( \min(u_{i+1}^{n+1},u_{i+\frac{1}{2}}^{n+1}), \max(u_{i+1}^{n+1},u_{i+\frac{1}{2}}^{n+1}) \Big)$ and $\gamma_{i+\frac{1}{2}}^{n+1} \in \Big( \min(u_{i}^{n+1},u_{i+\frac{1}{2}}^{n+1}), \max(u_{i}^{n+1},u_{i+\frac{1}{2}}^{n+1}) \Big)$ such that
\begin{eqnarray*}
    \psi(u_{i+1}^{n+1}) &=& \psi(u_{i+\frac{1}{2}}^{n+1}) + \Big( u_{i+1}^{n+1} - u_{i+\frac{1}{2}}^{n+1} \Big) \psi'(u_{i+\frac{1}{2}}^{n+1}) +\frac{\Big( u_{i+1}^{n+1} - u_{i+\frac{1}{2}}^{n+1} \Big)^2}{2} \psi''( \beta_{i+\frac{1}{2}}^{n+1}),\nonumber\\
    \psi(u_{i}^{n+1}) &=& \psi(u_{i+\frac{1}{2}}^{n+1}) + \Big( u_{i}^{n+1} - u_{i+\frac{1}{2}}^{n+1} \Big) \psi'(u_{i+\frac{1}{2}}^{n+1}) +\frac{\Big( u_{i}^{n+1} - u_{i+\frac{1}{2}}^{n+1} \Big)^2}{2} \psi''( \gamma_{i+\frac{1}{2}}^{n+1}).\nonumber
\end{eqnarray*}
Inserting these expressions in $B_{i+\frac{1}{2}}^{n+1}$ leads to 
\begin{align*}
    &\Delta x B_{i+\frac{1}{2}}^{n+1} = \mathcal{F}_{i+1}^{n+1} \Big[ \psi(u_{i+1}^{n+1}) -\psi(u_{i+\frac{1}{2}}^{n+1}) - \frac{\Big( u_{i+1}^{n+1} - u_{i+\frac{1}{2}}^{n+1} \Big)^2}{2} \psi''( \beta_{i+\frac{1}{2}}^{n+1}) \Big]\\
    & - \mathcal{F}_{i}^{n+1} \Big[  \psi(u_{i}^{n+1}) - \psi(u_{i+\frac{1}{2}}^{n+1}) - \frac{\Big( u_{i}^{n+1} - u_{i+\frac{1}{2}}^{n+1} \Big)^2}{2} \psi''( \gamma_{i+\frac{1}{2}}^{n+1})  \Big]
    + u_{i+\frac{1}{2}}^{n+1} \psi'( u_{i+\frac{1}{2}}^{n+1}) \Big( \mathcal{F}_{i+1}^{n+1} - \mathcal{F}_{i}^{n+1} \Big).
\end{align*}
Rearranging the terms we eventually obtain
\begin{align}
    \Delta x B_{i+\frac{1}{2}}^{n+1} &= \mathcal{F}_{i+1}^{n+1}\psi(u_{i+1}^{n+1}) - \mathcal{F}_{i}^{n+1}\psi(u_{i}^{n+1}) - \psi(u_{i+\frac{1}{2}}^{n+1}) \Big( \mathcal{F}_{i+1}^{n+1} - \mathcal{F}_{i}^{n+1} \Big)+ u_{i+\frac{1}{2}}^{n+1} \psi'(u_{i+\frac{1}{2}}^{n+1})\Big( \mathcal{F}_{i+1}^{n+1} - \mathcal{F}_{i}^{n+1} \Big) \nonumber \\
   \label{Bdemi}
   &- \mathcal{F}_{i+1}^{n+1} \frac{\Big( u_{i+1}^{n+1} - u_{i+\frac{1}{2}}^{n+1} \Big)^2}{2} \psi''( \beta_{i+\frac{1}{2}}^{n+1})
   + \mathcal{F}_{i}^{n+1} \frac{\Big( u_{i}^{n+1} - u_{i+\frac{1}{2}}^{n+1} \Big)^2}{2} \psi''( \gamma_{i+\frac{1}{2}}^{n+1}).
\end{align}
Using eventually the continuity equation on the dual mesh  
$$
\frac{\rho_{i+\frac{1}{2}}^{n+1} - \rho_{i+\frac{1}{2}}^{n}}{\Delta t } + \frac{\mathcal{F}_{i+1}^{n+1} - \mathcal{F}_{i}^{n+1}}{\Delta x} = 0,
$$
we get 
\begin{eqnarray*}
    \Delta x B_{i+\frac{1}{2}}^{n+1} &=& \mathcal{F}_{i+1}^{n+1}\psi(u_{i+1}^{n+1}) - \mathcal{F}_{i}^{n+1}\psi(u_{i}^{n+1}) - \psi(u_{i+\frac{1}{2}}^{n+1}) \Big( \mathcal{F}_{i+1}^{n+1} - \mathcal{F}_{i}^{n+1} \Big)- u_{i+\frac{1}{2}}^{n+1} \psi'(u_{i+\frac{1}{2}}^{n+1})(\rho_{i+\frac{1}{2}}^{n+1} - \rho_{i+\frac{1}{2}}^{n})\frac{\Delta x}{\Delta t} \nonumber \\
   &&- \mathcal{F}_{i+1}^{n+1} \frac{\Big( u_{i+1}^{n+1} - u_{i+\frac{1}{2}}^{n+1} \Big)^2}{2} \psi''( \beta_{i+\frac{1}{2}}^{n+1})
   + \mathcal{F}_{i}^{n+1} \frac{\Big( u_{i}^{n+1} - u_{i+\frac{1}{2}}^{n+1} \Big)^2}{2} \psi''( \gamma_{i+\frac{1}{2}}^{n+1}).\nonumber 
\end{eqnarray*}
Finally, we glean $A_{i+\frac{1}{2}}^{n+1}$ and $B_{i+\frac{1}{2}}^{n+1}$ 
\begin{align} \label{AplusB}
    &A_{i+\frac{1}{2}}^{n+1} + B_{i+\frac{1}{2}}^{n+1} = \frac{ \rho_{i+\frac{1}{2}}^{n+1} \psi(u_{i+\frac{1}{2}}^{n+1}) - \rho_{i+\frac{1}{2}}^{n} \psi(u_{i+\frac{1}{2}}^{n})}{\Delta t} + \frac{\mathcal{F}_{i+1}^{n+1}\psi(u_{i+1}^{n+1}) - \mathcal{F}_{i}^{n+1}\psi(u_{i}^{n+1})}{\Delta x} \\
    &- \mathcal{F}_{i+1}^{n+1} \frac{\Big( u_{i+1}^{n+1} - u_{i+\frac{1}{2}}^{n+1} \Big)^2}{2 \Delta x} \psi''( \beta_{i+\frac{1}{2}}^{n+1})
   + \mathcal{F}_{i}^{n+1} \frac{\Big( u_{i}^{n+1} - u_{i+\frac{1}{2}}^{n+1} \Big)^2}{2 \Delta x} \psi''( \gamma_{i+\frac{1}{2}}^{n+1}) +\rho_{i+\frac{1}{2}}^{n}\frac{ (u_{i+\frac{1}{2}}^{n} - u_{i+\frac{1}{2}}^{n+1})^2}{2 \Delta t} \psi''( \alpha_{i+\frac{1}{2}}^{n+1}). \nonumber
\end{align}
We obtained the expected result.
\end{proof}
\end{lemma}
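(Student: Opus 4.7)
The plan is to reproduce on the dual mesh the computation that proved Lemma \ref{renorm_continuity_lemma}: multiply the discrete momentum equation by $\psi'(u_{i+\frac{1}{2}}^{n+1})$ and turn the resulting scalar expressions into the sought renormalized form by repeated second-order Taylor--Lagrange expansions, relying on the dual mesh continuity equation inherited from the primal one and the definitions \eqref{rho_dual}. I would split the left-hand side into $A_{i+\frac{1}{2}}^{n+1}$ (time derivative) and $B_{i+\frac{1}{2}}^{n+1}$ (convective flux), while the force term $\tilde{\rho}_{i+\frac{1}{2}}^{n+1}\delta(\phi^{n+1})_{i+\frac{1}{2}}\psi'(u_{i+\frac{1}{2}}^{n+1})$ is kept untouched on the right. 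The goal is to reassemble $A_{i+\frac{1}{2}}^{n+1}+B_{i+\frac{1}{2}}^{n+1}$ as a discrete time derivative of $\rho\,\psi(u)$, a conservative flux of $\mathcal{F}\,\psi(u)$, and a residual $\mathcal{S}_i^{n+1}$ that I expect, by convexity and upwinding, to be non-negative.

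For the time part, I would use the algebraic identity $\rho_{i+\frac{1}{2}}^{n+1}u_{i+\frac{1}{2}}^{n+1}-\rho_{i+\frac{1}{2}}^{n}u_{i+\frac{1}{2}}^{n}=u_{i+\frac{1}{2}}^{n+1}(\rho_{i+\frac{1}{2}}^{n+1}-\rho_{i+\frac{1}{2}}^{n})+\rho_{i+\frac{1}{2}}^{n}(u_{i+\frac{1}{2}}^{n+1}-u_{i+\frac{1}{2}}^{n})$; after multiplication by $\psi'(u_{i+\frac{1}{2}}^{n+1})$, a second-order expansion of $\psi(u_{i+\frac{1}{2}}^{n})$ around $u_{i+\frac{1}{2}}^{n+1}$ turns the second summand into $\rho_{i+\frac{1}{2}}^{n}[\psi(u_{i+\frac{1}{2}}^{n+1})-\psi(u_{i+\frac{1}{2}}^{n})]$ plus the expected quadratic remainder appearing in $\mathcal{S}_i^{n+1}$. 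For the flux part, I would introduce $u_{i+\frac{1}{2}}^{n+1}$ as a pivot through $\mathcal{F}_{i+1}^{n+1}u_{i+1}^{n+1}-\mathcal{F}_{i}^{n+1}u_{i}^{n+1}=\mathcal{F}_{i+1}^{n+1}(u_{i+1}^{n+1}-u_{i+\frac{1}{2}}^{n+1})-\mathcal{F}_{i}^{n+1}(u_{i}^{n+1}-u_{i+\frac{1}{2}}^{n+1})+u_{i+\frac{1}{2}}^{n+1}(\mathcal{F}_{i+1}^{n+1}-\mathcal{F}_{i}^{n+1})$ and Taylor-expand $\psi(u_{i+1}^{n+1})$ and $\psi(u_{i}^{n+1})$ around $u_{i+\frac{1}{2}}^{n+1}$. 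The two cross terms proportional to $u_{i+\frac{1}{2}}^{n+1}\psi'(u_{i+\frac{1}{2}}^{n+1})$ coming respectively from $(\rho_{i+\frac{1}{2}}^{n+1}-\rho_{i+\frac{1}{2}}^{n})/\Delta t$ and $(\mathcal{F}_{i+1}^{n+1}-\mathcal{F}_{i}^{n+1})/\Delta x$ must then cancel by virtue of the dual mesh continuity relation pointed out after \eqref{rho_dual}, which is the crucial compatibility used in the calculation.

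The main obstacle will be verifying the non-negativity of the residual $\mathcal{S}_i^{n+1}$ collected from the three Taylor--Lagrange expansions. The time contribution $\rho_{i+\frac{1}{2}}^{n}(u_{i+\frac{1}{2}}^{n}-u_{i+\frac{1}{2}}^{n+1})^2\psi''(\alpha_{i+\frac{1}{2}}^{n+1})/(2\Delta t)$ is non-negative because $\rho^{n}>0$ and $\psi$ is convex, but the convective residual $-\mathcal{F}_{i+1}^{n+1}(u_{i+1}^{n+1}-u_{i+\frac{1}{2}}^{n+1})^2\psi''(\beta_{i+\frac{1}{2}}^{n+1})/(2\Delta x)+\mathcal{F}_{i}^{n+1}(u_{i}^{n+1}-u_{i+\frac{1}{2}}^{n+1})^2\psi''(\gamma_{i+\frac{1}{2}}^{n+1})/(2\Delta x)$ requires a case analysis driven by the upwind rule \eqref{u_primal}. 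If $\mathcal{F}_{i+1}^{n+1}\geq 0$ then $u_{i+1}^{n+1}=u_{i+\frac{1}{2}}^{n+1}$ and the first summand vanishes, while if $\mathcal{F}_{i+1}^{n+1}<0$ the factor $-\mathcal{F}_{i+1}^{n+1}$ is positive and the summand is non-negative by convexity; the symmetric argument at the interface $x_i$ disposes of the second summand. This is precisely where the upwinded definition \eqref{u_primal} pays off and yields $\mathcal{S}_i^{n+1}\geq 0$.
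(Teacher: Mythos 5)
Your proposal is correct and follows essentially the same route as the paper: the same splitting into $A_{i+\frac{1}{2}}^{n+1}$ and $B_{i+\frac{1}{2}}^{n+1}$, the same Taylor--Lagrange expansions around $u_{i+\frac{1}{2}}^{n+1}$, and the same use of the dual mesh continuity equation to cancel the two cross terms in $u_{i+\frac{1}{2}}^{n+1}\psi'(u_{i+\frac{1}{2}}^{n+1})$. Your explicit case analysis for the sign of the convective part of $\mathcal{S}_{i}^{n+1}$ (the squared difference vanishes on the upwind side, the coefficient has the right sign on the other) is exactly the argument the paper invokes implicitly when it attributes the non-negativity to \eqref{u_primal} and convexity.
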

A consequence of Lemmas \ref{renorm_continuity_lemma}, \ref{renorm_momentum_lemma} and the definition \eqref{rho_upwind} is that the discrete energy functional \eqref{def:H_discret} decays along the solutions of the scheme \eqref{EPBD}. More precisely we have.
\begin{pro}{(Discrete energy decay)} \label{energy_decay} Let $ (\rho^{n},u^{n},\phi^{n})_{n = 0,\dots,N_{T}} \subset X(\mathcal{T}) \times X(\mathcal{T}^{\star}) \times X(\mathcal{T})$ a solution to \eqref{EPBD} with  $\rho^{n} > 0$ for all $n \in \lbrace 0, ..., N_{T} \rbrace$. Assume $\phi^{0} \in X(\mathcal{T})$ satisfies the discrete Poisson equation initially.
Then we have 

\begin{equation}
 \forall n \in \lbrace 0,\dots,N_{T}-1 \rbrace \quad  \mathcal{H}\Big( \rho^{n+1},u^{n+1},\phi^{n+1} \Big) - \mathcal{H}\Big( \rho^{n},u^{n},\phi^{n}\Big) =  - \Delta t\tau^{n+1},
\end{equation}
where $\tau^{n+1} \geq 0$ is given by
\begin{align} \label{tau_n}
   & \tau^{n+1} =  \sum_{i=0}^{N} \Big( \mathcal{S}_{i}^{n+1} + e^{-\xi^n_i}\frac{(\phi_i^{n+1}-\phi_i^{n})^2}{2 \Delta t} \Big) \Delta x +\frac{\varepsilon^2}{2\Delta t} \sum_{i=0}^{N} 
\Big(\frac{\phi_{i+1}^{n+1}-\phi^{n+1}_i}{\Delta x} - \frac{\phi_{i+1}^{n}-\phi^{n}_i}{\Delta x}\Big)^2 \Delta x\\
&  +\frac{g(0)}{\Delta x} \sum_{i=0}^{N} \varepsilon^2 \big \vert \delta (\phi^{n+1})_{i+\frac{1}{2}}-\delta (\phi^{n+1})_{i-\frac{1}{2}} \big \vert^2 \Delta x + \frac{g(0)}{\Delta x} \sum_{i=0}^{N} \big \vert (e^{-\phi_{i}^{n+1}} - e^{-\phi_{i-1}^{n+1}})(\phi_{i}^{n+1}-\phi_{i-1}^{n+1}) \big \vert \Delta x \nonumber
\end{align}
where $S_{i}^{n+1} \geq 0$ is given \eqref{def:S} with $\psi : s\in \R \longmapsto \frac{s^2}{2}$ and for all i $\in \lbrace 0,\dots,N \rbrace$, $\xi^n_i\in \Big( \min(\phi_i^n, \phi_i^{n+1}), \max(\phi_i^n, \phi_i^{n+1})  \Big)$.

\begin{proof} Let $n \in \lbrace 0,\dots, N_{T}-1 \rbrace.$ Observe on the one hand that applying Lemma \ref{renorm_momentum_lemma} with the function $\psi : s \in \R \longmapsto \frac{s^{2}}{2}$ yields for all $i \in \lbrace 0,\dots,N \rbrace,$
\begin{align}\label{discrete_kinetic_energy_balance}
&\frac{ \rho_{i+\frac{1}{2}}^{n+1} (u_{i+\frac{1}{2}}^{n+1})^2 - \rho_{i+\frac{1}{2}}^{n} (u_{i+\frac{1}{2}}^{n})^2}{2\Delta t} + \frac{\mathcal{F}_{i+1}^{n+1}(u_{i+1}^{n+1})^2 - \mathcal{F}_{i}^{n+1}(u_{i}^{n+1})^2}{2\Delta x}  =  \tilde{\rho}_{i+\frac{1}{2}}^{n+1} u_{i+\frac{1}{2}}^{n+1} \delta (\phi^{n+1})_{i+\frac{1}{2}}  - \mathcal{S}_{i}^{n+1}  
\end{align}
where $\mathcal{S}_{i}^{n+1} \geq 0$ is given by \eqref{def:S}. Let us treat the first term of the right hand side. Using the definition \eqref{rho_upwind} we have
$$
D:= \sum_{i=0}^{N} \tilde{\rho}_{i+\frac{1}{2}}^{n+1} u_{i+\frac{1}{2}}^{n+1} \delta (\phi^{n+1})_{i+\frac{1}{2}} \Delta x = \sum_{i=0}^{N} \mathcal{F}_{i+\frac{1}{2}}^{n+1} \delta (\phi^{n+1})_{i+\frac{1}{2}} \Delta x 
 - \sum_{i=0}^{N} G\Big(\rho_{i}^{n+1},\rho_{i+1}^{n+1},0\Big)\delta (\phi^{n+1})_{i+\frac{1}{2}} \Delta x.
$$
Let us set 
$$
D_{1} = \sum_{i=0}^{N} \mathcal{F}_{i+\frac{1}{2}}^{n+1} \delta (\phi^{n+1})_{i+\frac{1}{2}} \Delta x,$$  and  
$$
D_{2} = - \sum_{i=0}^{N} G(\rho_{i}^{n+1},\rho_{i+1}^{n+1},0)\delta( \phi^{n+1})_{i+\frac{1}{2}} \Delta x = g(0)\sum_{i=0}^{N} (\rho_{i+1}^{n+1}-\rho_{i}^{n+1})\delta (\phi^{n+1})_{i+\frac{1}{2}} \Delta x. 
$$
As for the first term, using the discrete integration by parts \eqref{ipp1} of Lemma \ref{ipp_lemma} and the continuity equation, we have
$$
D_{1} = - \sum_{i=0}^{N} \frac{ \mathcal{F}_{i+\frac{1}{2}}^{n+1} - \mathcal{F}_{i-\frac{1}{2}}^{n+1} }{\Delta x} \phi_{i}^{n+1}  \Delta x= \sum_{i=0}^{N} \frac{\rho_{i}^{n+1} - \rho_{i}^{n}}{\Delta t} \phi_{i}^{n+1} \Delta x. 
$$
Then, using the discrete Poisson equation (which also holds initially), we obtain
\begin{align*}
&\sum_{i=0}^{N} \frac{\rho_{i}^{n+1} - \rho_{i}^{n}}{\Delta t} \phi_{i}^{n+1} \Delta x= \frac{\varepsilon^2 }{\Delta t} \sum_{i=0}^{N} \Big( \Delta (\phi^{n+1})_{i} - \Delta(\phi^{n})_{i}\Big)\phi^{n+1}_i \Delta x + \frac{1}{\Delta t} \sum_{i=0}^{N} \Big(e^{-\phi_i^{n+1}} - e^{-\phi_i^{n}}  \Big)\phi^{n+1}_i \Delta x\nonumber\\
&= -\frac{\varepsilon^2}{\Delta t} \sum_{i=0}^{N} \Big( \delta (\phi^{n+1})_{i+\frac{1}{2}}- \delta( \phi^{n})_{i+\frac{1}{2}}\Big) \delta (\phi^{n+1})_{i+\frac{1}{2}} \Delta x  + \frac{1}{\Delta t} \sum_{i=0}^{N} \Big(e^{-\phi_i^{n+1}} - e^{-\phi_i^{n}}\Big)\phi^{n+1}_i \Delta x, \nonumber
\end{align*} 
where we used the discrete integration by parts \eqref{ipp2}.
We now use the identity $-a(a-b)=-a^2/2+b^2/2-(a-b)^2/2$ for $a, b\in \mathbb{R}$, so the first term re-writes 
\begin{align*}
&-\frac{\varepsilon^2}{\Delta t} \sum_{i=0}^{N} \Big( \delta (\phi^{n+1})_{i+\frac{1}{2}}- \delta (\phi^{n})_{i+\frac{1}{2}}\Big) (\delta \phi^{n+1})_{i+\frac{1}{2}} \Delta x\\
&= -\frac{\varepsilon^2}{2\Delta t} \sum_{i=0}^{N} \vert \delta (\phi^{n+1})_{i+\frac{1}{2}} \vert^2\Delta x
+\frac{\varepsilon^2}{2\Delta t} \sum_{i=0}^{N} \vert\delta( \phi^{n})_{i+\frac{1}{2}} \vert^2 \Delta x- \frac{\varepsilon^2}{2\Delta t} \sum_{i=0}^{N} 
\Big(\delta (\phi^{n+1})_{i+\frac{1}{2}} - \delta (\phi^{n})_{i+\frac{1}{2}} \Big)^2 \Delta x. 
\end{align*} 
As for the second term we have
\begin{align*}
&\frac{1}{\Delta t} \sum_{i=0}^{N}(e^{-\phi_i^{n+1}}-e^{-\phi_i^{n}}  )(\phi_i^{n+1}+1-\phi^{n}_i-1+\phi^{n}_i) \Delta x \nonumber\\
&\hspace{0.5cm}=  -\frac{1}{\Delta t} \sum_{i=0}^{N} h(\phi_i^{n+1})\Delta x+\frac{1}{\Delta t} \sum_{i=0}^{N} h(\phi_i^{n}) \Delta x  -\frac{1}{\Delta t} \sum_{i=0}^{N}\Big(e^{-\phi_i^{n+1}} - e^{-\phi_i^{n}} +  e^{-\phi_i^{n}}(\phi_i^{n+1}-\phi_i^{n})\Big) \Delta x \nonumber\\
&\hspace{0.5cm}=-\frac{1}{\Delta t} \sum_{i=0}^{N} h(\phi_i^{n+1}) \Delta x+\frac{1}{\Delta t} \sum_{i=0}^{N} h(\phi_i^{n}) \Delta x - \frac{1}{\Delta t} \sum_{i=0}^{N} e^{-\xi^n_i}\frac{(\phi_i^{n+1}-\phi_i^{n})^2}{2} \Delta x,
\end{align*}
where Taylor-Lagrange expansion has been performed with $\xi^n_i\in \Big( \min(\phi_i^n, \phi_i^{n+1}), \max(\phi_i^n, \phi_i^{n+1})  \Big)$.
Gathering the terms together we obtain
\begin{align*}
 \Delta t D_1 &= - \sum_{i=0}^{N} \frac{\varepsilon^2}{2} \big \vert \delta (\phi^{n+1})_{i+\frac{1}{2}} \big \vert^2 \Delta x+ \sum_{i=0}^{N} \frac{\varepsilon^2}{2} \big \vert \delta (\phi^{n})_{i+\frac{1}{2}} \big \vert^2 \Delta x \nonumber\\
 &-\sum_{i=0}^{N} h(\phi_i^{n+1}) \Delta x+\sum_{i=0}^{N} h(\phi_i^{n}) \Delta x  \nonumber\\
 &- \frac{\varepsilon^2}{2} \sum_{i=0}^{N} 
\Big(\delta (\phi^{n+1})_{i+\frac{1}{2}} - \delta( \phi^{n})_{i+\frac{1}{2}} \Big)^2 \Delta x
 - \sum_{i=0}^{N} e^{-\xi^n_i}\frac{(\phi_i^{n+1}-\phi_i^{n})^2}{2} \Delta x. 
\end{align*}
We now treat the second term $D_2$:
\begin{align*}
D_{2} &=  g(0) \sum_{i= 0}^{N} (\rho_{i+1}^{n+1} - \rho_{i}^{n+1}) \delta (\phi^{n+1})_{i+\frac{1}{2}}\Delta x  = -\frac{g(0)}{\Delta x} \sum_{i=0}^{N} \rho_{i}^{n+1} (\phi_{i+1}^{n+1} -2 \phi_{i}^{n+1} + \phi_{i-1}^{n+1})\Delta x, 
\end{align*}
where we used a discrete integration by parts to get the last equality. Then using the discrete Poisson equation we get
\begin{align*}
&D_2 = - \frac{g(0)}{\Delta x} \sum_{i=0}^{N} \varepsilon^2 \big \vert \delta (\phi^{n+1})_{i+\frac{1}{2}}-\delta (\phi^{n+1})_{i-\frac{1}{2}} \big \vert^2 \Delta x \nonumber - \frac{g(0)}{\Delta x} \sum_{i=0}^{N} e^{-\phi_{i}^{n+1}}( \phi_{i+1}^{n+1} - \phi_{i}^{n+1} -(\phi_{i}^{n+1}-\phi_{i-1}^{n+1}))\Delta x.
\end{align*}
Since $g(0) \geq 0$ the first sum is non positive. The second sum is also non positive since after using a change of indices we have
\begin{align*}
-\sum_{i=0}^{N} e^{-\phi_{i}^{n+1}}( \phi_{i+1}^{n+1} - \phi_{i}^{n+1} -(\phi_{i}^{n+1}-\phi_{i-1}^{n+1})) \Delta x = \sum_{i=0}^{N} (e^{-\phi_{i}^{n+1}} - e^{-\phi_{i-1}^{n+1}})(\phi_{i}^{n+1}-\phi_{i-1}^{n+1}) \Delta x.    
\end{align*}
Since the function $s \longmapsto e^{-s}$ is decreasing we thus get $ \sum_{i=0}^{N} (e^{-\phi_{i}^{n+1}} - e^{-\phi_{i-1}^{n+1}})(\phi_{i}^{n+1}-\phi_{i-1}^{n+1})\Delta x \leq 0.$
Thus, it yields for $D_2$
\begin{align*}
D_2 &= - \frac{g(0)}{\Delta x} \sum_{i=0}^{N} \varepsilon^2 \big \vert \delta (\phi^{n+1})_{i+\frac{1}{2}}-\delta (\phi^{n+1})_{i-\frac{1}{2}} \big \vert^2 \Delta x - \frac{g(0)}{\Delta x} \sum_{i=0}^{N}   \big \vert (e^{-\phi_{i}^{n+1}} - e^{-\phi_{i-1}^{n+1}})(\phi_{i}^{n+1}-\phi_{i-1}^{n+1} )\big \vert \Delta x. 
\end{align*}
So we eventually obtain,
\begin{align*}
\Delta t D :=\Delta t \sum_{i=0}^{N}\tilde{\rho}_{i+\frac{1}{2}}^{n+1} u_{i+\frac{1}{2}}^{n+1} \delta (\phi^{n+1})_{i+\frac{1}{2}} \Delta x = \Delta t D_{1} + \Delta t D_{2}, 
\end{align*}
which enables to recover the terms in \eqref{tau_n}. Summing \eqref{discrete_kinetic_energy_balance} over $i \in \lbrace 0,\dots,N \rbrace$ then leads to the discrete kinetic energy balance.
\end{proof}
\end{pro}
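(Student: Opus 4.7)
The plan is to begin from the discrete kinetic energy balance obtained by applying Lemma \ref{renorm_momentum_lemma} with $\psi(s) = s^{2}/2$ on each dual cell, then summing over $i \in \{0,\dots,N\}$. By periodicity the conservative flux differences telescope, while the Taylor residuals $\mathcal{S}_{i}^{n+1}$ (non-negative because of the upwind prescription \eqref{u_primal} and the convexity of $\psi$) accumulate into the first summand of $\tau^{n+1}$ in \eqref{tau_n}. The proposition then reduces to showing that the global work of the force,
\[
D := \sum_{i=0}^{N}\tilde{\rho}_{i+1/2}^{n+1}\, u_{i+1/2}^{n+1}\,\delta(\phi^{n+1})_{i+1/2}\,\Delta x,
\]
reconstructs (after multiplication by $-\Delta t$) the jump of the electrostatic energy $\sum (\varepsilon^{2}/2)|\delta\phi|^{2}\Delta x + \sum h(\phi)\Delta x$ between times $t_{n}$ and $t_{n+1}$, modulo additional non-negative dissipation.

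The key algebraic step exploits the definition \eqref{rho_upwind}: since $G(s,t,0) = g(0)(s-t)$ from \eqref{def:G}, one obtains the identity $\tilde{\rho}_{i+1/2}^{n+1}\, u_{i+1/2}^{n+1} = \mathcal{F}_{i+1/2}^{n+1} + g(0)(\rho_{i+1}^{n+1}-\rho_{i}^{n+1})$, which splits $D = D_{1} + D_{2}$. For $D_{1} = \sum \mathcal{F}_{i+1/2}^{n+1}\delta\phi^{n+1}_{i+1/2}\Delta x$, the discrete integration by parts \eqref{ipp1} followed by the discrete continuity equation turns it into $\Delta t^{-1}\sum(\rho_{i}^{n+1}-\rho_{i}^{n})\phi_{i}^{n+1}\Delta x$. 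Substituting the discrete Poisson equation at times $t_{n}$ and $t_{n+1}$ (legitimate by induction since $\phi^{0}$ satisfies it initially), then using \eqref{ipp2} and the elementary identity $-a(a-b) = -a^{2}/2 + b^{2}/2 - (a-b)^{2}/2$, reconstructs the jump of $\sum(\varepsilon^{2}/2)|\delta\phi|^{2}\Delta x$ together with the squared-defect dissipation $(\varepsilon^{2}/(2\Delta t))(\delta\phi^{n+1}-\delta\phi^{n})^{2}$. For the nonlinear Boltzmann contribution, a Taylor-Lagrange expansion of $f(s) = e^{-s}$, combined with $h(s) = -(s+1)e^{-s}$ and $h'(s) = s e^{-s}$, rebuilds the jump of $\sum h(\phi)\Delta x$ modulo the non-negative remainder $e^{-\xi_{i}^{n}}(\phi_{i}^{n+1}-\phi_{i}^{n})^{2}/(2\Delta t)$ with $\xi_{i}^{n}$ between $\phi_{i}^{n}$ and $\phi_{i}^{n+1}$.

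For $D_{2} = g(0)\sum(\rho_{i+1}^{n+1}-\rho_{i}^{n+1})\delta\phi^{n+1}_{i+1/2}\Delta x$, a summation by parts rewrites it as $-g(0)\sum \rho_{i}^{n+1}(\Delta\phi^{n+1})_{i}\Delta x$; using Poisson once more produces $-g(0)\varepsilon^{2}\sum |(\Delta\phi^{n+1})_{i}|^{2}\Delta x$, which is manifestly non-positive, plus $-g(0)\sum e^{-\phi_{i}^{n+1}}(\Delta\phi^{n+1})_{i}\Delta x$. A final summation by parts on the latter, combined with the fact that $s\mapsto e^{-s}$ is non-increasing, makes this last quantity also non-positive. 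Because of assumption \eqref{ass_2_g} which ensures $g(0)\ge 0$, both contributions supply the remaining two summands appearing in \eqref{tau_n}. Assembling $D_{1}$ and $D_{2}$ with the kinetic energy balance yields $\mathcal{H}(\rho^{n+1},u^{n+1},\phi^{n+1}) - \mathcal{H}(\rho^{n},u^{n},\phi^{n}) = -\Delta t\, \tau^{n+1}$ with $\tau^{n+1}\ge 0$.

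The genuine difficulty does not lie in the bookkeeping but in securing the algebraic compatibility underlying the above decomposition. The specific definition \eqref{rho_upwind} of the density in the forcing term is tailored precisely so that $\tilde{\rho}\, u$ differs from the conservative mass flux $\mathcal{F}$ by a quantity $g(0)(\rho_{i+1}-\rho_{i})$ of definite sign, thereby making the discrete continuity equation and the discrete Poisson equation cooperate when computing $D$. Any other consistent choice of the interfacial density (simple average, pure upwinding of $\rho$ alone, etc.) would leave in the computation of $D$ a residual whose sign cannot be controlled, and the unconditional character of the energy decay would break down.
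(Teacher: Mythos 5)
Your proposal is correct and follows essentially the same route as the paper's proof: the kinetic energy balance from Lemma \ref{renorm_momentum_lemma} with $\psi(s)=s^{2}/2$, the splitting $D=D_{1}+D_{2}$ via $G(s,t,0)=g(0)(s-t)$, the chain of discrete integrations by parts combined with the continuity and Poisson equations (plus the identity $-a(a-b)=-a^{2}/2+b^{2}/2-(a-b)^{2}/2$ and a Taylor--Lagrange expansion) for $D_{1}$, and the two sign-definite contributions for $D_{2}$. The only blemish is a harmless bookkeeping slip in $D_{2}$, where the summation by parts actually yields $-g(0)\,\Delta x\sum_{i}\rho_{i}^{n+1}(\Delta\phi^{n+1})_{i}\,\Delta x$ (an extra factor $\Delta x$ compared to what you wrote); this does not affect the signs or the conclusion.
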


We now establish  discrete elliptic estimates for the discrete Poisson equation. 

\begin{pro}{(Discrete elliptic estimates)} \label{discrete_elliptic_estimates}
Let $ (\rho^{n},u^{n},\phi^{n})_{n = 0,\dots,N_{T}} \subset X(\mathcal{T})\times X(\mathcal{T}^{\star}) \times X(\mathcal{T})$ a solution to \eqref{EPBD} with  $\rho^{n} > 0$ for all $n \in \lbrace 0, ..., N_{T} \rbrace.$ Assume $\phi^{0} \in X(\mathcal{T})$ satisfies the discrete Poisson equation initially.   Then the discrete potential satisfies for all $n \in \lbrace 0,\dots,N_{T} \rbrace$ the following estimates:
\begin{align}
    &\forall p \in [1,+\infty), \quad \Big \| e^{-\phi^{n}_{}} \Big \|_{L^{p}(\T)} \leq \| \rho^{n} \|_{L^{p}(\T)}, \label{estimate_exp_lp}\\
    &\forall p \in [1,+\infty), \quad \varepsilon^2 \sum_{i=0}^{N} \Big | \frac{ \phi_{i+1}^{n} - \phi_{i}^{n}}{\Delta x}\frac{ e^{-(p-1)\phi_{i+1}^{n}} - e^{-(p-1)\phi_{i}^{n}}}{ \Delta x}  \Big |\Delta x \leq \| \rho^{n} \|^p_{L^{p}(\T)}, \label{estimate_dx_exp_phi}\\
    & \forall i \in \lbrace 0,\dots,N \rbrace, \: \underset{ i \in \lbrace 0,\dots,N \rbrace}{\min \log(\rho_{i}^{n})} \leq - \phi^{n}_{i} \leq \underset{ i \in \lbrace 0,\dots,N \rbrace}{\max \log( \rho_{i}^{n})} \label{maximum_principle},\\
    & \underset{ i \in \lbrace 0,\dots,N \rbrace}{\min \log(\rho_{i}^{n})} \leq \langle - \phi^{n} \rangle \leq \underset{ i \in \lbrace 0,\dots,N \rbrace}{\max \log( \rho_{i}^{n})}, \label{estim_mean_value_phi} \\
    & \alpha^{n} \Big \| \phi^{n} - \langle \phi^{n} \rangle \Big \|_{L^{2}(\T)} \leq \Vert \rho^{n} \Vert_{L^{2}(\T)} +  \Vert \rho^{n} \Vert_{L^{1}(\T)}, \label{l2_estim_phi}\\
   & \varepsilon \Big  | \phi^{n} - \langle \phi^{n} \rangle \Big |_{H^{1}(\T)} \leq \frac{1}{\sqrt{\alpha^{n}}} \Big( \| \rho^{n} \|_{L^{2}(\T)}  +  \| \rho^{n} \|_{L^{1}(\T)} \Big)\label{estimate_semi_h1},
\end{align}
with $\langle \phi^n\rangle =\sum_{i=0}^N \phi^n_i \Delta x$, $\alpha^{n} =  e^{\underset{ i \in \lbrace 0,\dots,N \rbrace}{\min \log(\rho_{i}^{n})} }$ and where the inequality \eqref{estimate_exp_lp} is an equality for $p = 1$. 
\begin{proof}
We prove \eqref{estimate_exp_lp}. Fix  $p \in [1,+\infty)$ and $ n \in \lbrace 0,\dots,N_{T} \rbrace.$ We set $\psi = -\phi^{n} $. We multiply the discrete Poisson equation  by $e^{(p-1) \psi_{i}}$ for $i \in \lbrace 0,\dots,N\rbrace$ and sum over $i \in \lbrace 0,\dots,N \rbrace.$  We  get after a discrete integration by parts
\begin{align}
\sum_{i=0}^{N} \varepsilon^2  \frac{ \psi_{i+1}- \psi_{i}}{\Delta x}  \frac{ e^{(p-1)\psi_{i+1}} - e^{(p-1)\psi_{i}} }{\Delta x} \Delta x+ \sum_{i= 0}^{N} e^{ p\psi_{i}} \Delta x = \sum_{i=0}^{N} \rho_{i}^{n} e^{(p-1) \psi_{i}} \Delta x. \label{roni}
\end{align}
Since $p \geq 1$ and the exponential function is increasing, the first sum is non negative. As a result we obtain on the one hand
\begin{align*}
\sum_{i=0}^{N} \vert e^{\psi_{i}} \vert ^{p} \Delta x\leq \sum_{i=0}^{N} \rho_{i}^{n} e^{(p-1) \psi_{i}} \Delta x \leq  \Big( \sum_{i=0}^{N} \vert \rho_{i}^{n} \vert ^{p}  \Delta x\Big)^{\frac{1}{p}} \Big( \sum_{i=0}^{N}  \vert e^{\psi_{i}} \vert^{p}  \Delta x\Big)^{\frac{p-1}{p}}, 
\end{align*}
where for the last inequality we used the Hölder inequality in the duality $(\ell^{p}(\R^{N+1}), \ell^{\frac{p}{p-1}}(\R^{N+1}))$. It yields \eqref{estimate_exp_lp} after a simplification. 
The case $p = 1$ yields the equality in \eqref{estimate_exp_lp} directly from \eqref{roni}. Using again \eqref{roni} we obtain on the other hand,
\begin{align*}
\sum_{i=0}^{N} \varepsilon^2  \frac{ \psi_{i+1}- \psi_{i}}{\Delta x}  \frac{ e^{(p-1)\psi_{i+1}} - e^{(p-1)\psi_{i}} }{\Delta x} \Delta x \leq \| \rho^{n} \|_{L^{p}(\T)} \| e^{-\phi^{n}} \|_{L^{p}(\T)}^{p-1} \leq \| \rho^{n} \|_{L^{p}(\T)}^{p}.
\end{align*}
It proves \eqref{estimate_dx_exp_phi}.  We prove the maximum principle \eqref{maximum_principle}. We set $M = \underset{ i \in \lbrace 0,\dots,N \rbrace} \max  \log(\rho_{i}^{n})$ and $\psi = -\phi^{n} .$ Then for all $i \in \lbrace 0,\dots,N\rbrace:$
\begin{align}
    -\varepsilon^2 \frac{ \psi_{i+1} - 2 \psi_{i} + \psi_{i-1} }{\Delta x^2} + e^{\psi_{i}} - e^{M} = \rho_{i}^{n} - e^{M}. \label{tito}
\end{align}
By definition of $M$ we have $\rho_{i}^{n} -e^{M} \leq 0$ for all $i \in \lbrace 0,\dots,N \rbrace.$ Let us now prove the claim by contradiction. Assume that there exists $j \in \lbrace 0,\dots,N \rbrace$ such that $\psi_{j} > M.$ Then let $i^{\star} \in \lbrace 0,\dots, N \rbrace$ such that $\psi_{i^{\star}} = \underset{i \in \lbrace 0,\dots,N \rbrace } \max \psi_{i}.$ Then we have 
$$
\psi_{i^{\star}} > M \Longrightarrow e^{\psi_{i^{\star}}} > e^{M}.
$$
In addition, 
$$
-\varepsilon^2 \frac{ \psi_{i^{\star}+1} - 2 \psi_{i^{\star}} + \psi_{i^{\star}-1} }{\Delta x^2} \geq 0
$$
so we deduce from \eqref{tito} applied at $i = i^{\star}$ that $\rho_{i^{\star}} > e^{M}$ which is the expected contradiction. A similar reasoning with $m = \underset{ i \in \lbrace 0,\dots,N \rbrace} \min  \log(\rho_{i}^{n})$ yields the lower bound in \eqref{maximum_principle}. The estimate \eqref{estim_mean_value_phi} is an immediate consequence of \eqref{maximum_principle} since $\displaystyle \sum_{i=0}^{N} \Delta x = 1.$ We now prove the $L^{2}(\T)$ estimate \eqref{l2_estim_phi}.
We set here $\psi = -\phi^{n} + \langle \phi^{n} \rangle.$ Then we have for $i \in \lbrace 0,\dots,N\rbrace:$
\begin{align}
    -\varepsilon^2 \frac{ \psi_{i+1} - 2 \psi_{i} + \psi_{i-1} }{\Delta x^2} + e^{-\phi_{i}^{n}} - e^{\langle \phi^{n} \rangle} = \rho_{i}^{n} - e^{\langle \phi^{n} \rangle}. \label{zlatan}
\end{align}
We multiply \eqref{zlatan} by $\psi_{i}$ for all $i \in \lbrace 0,\dots,N\rbrace$ and sum. Then after a discrete integration by parts we have 
\begin{align}
    \sum_{i=0}^{N} \varepsilon^2 \Big \vert \frac{\psi_{i+1}-\psi_{i}}{\Delta x} \Big \vert^2 \Delta x + \sum_{i=0}^{N} \big( e^{-\phi_{i}^{n}} - e^{\langle \phi^{n} \rangle }\big) \Big( -\phi_{i}^{n} + \langle \phi^{n} \rangle \Big) \Delta x = \sum_{i=0}^{N} \Big( \rho_{i}^{n} -e^{\langle \phi^{n} \rangle } \Big) \Big( -\phi_{i}^{n} + \langle \phi^{n} \rangle \Big) \Delta x.
\end{align}
Using the mean value theorem, we have for each $i \in \lbrace 0,\dots,N \rbrace$ the existence of $\xi_{i}^{n} \in \Big( \min(-\phi_{i}^{n},\langle \phi^{n} \rangle), \max(-\phi_{i}^{n},\langle \phi^{n} \rangle) \Big) $ such that
$$
\big( e^{-\phi_{i}^{n}} - e^{\langle \phi^{n} \rangle }\big) \Big( -\phi_{i}^{n} + \langle \phi^{n} \rangle \Big) = e^{\xi_{i}^{n}} \Big( -\phi_{i}^{n} + \langle \phi^{n} \rangle \Big)^2.
$$
Because of the bounds \eqref{maximum_principle}, \eqref{estim_mean_value_phi}, we deduce that $e^{\xi_{i}^{n}} \geq e^{\underset{ i \in \lbrace 0,\dots,N \rbrace}{\min \log(\rho_{i}^{n})} }:=\alpha^{n}$ so that this lower bound is uniform with respect to $i$. We thus obtain
\begin{align}
    \sum_{i=0}^{N} \varepsilon^2 \Big \vert \frac{\psi_{i+1}-\psi_{i}}{\Delta x} \Big \vert^2 \Delta x + \alpha^{n} \sum_{i=0}^{N} \Big \vert -\phi_{i}^{n} + \langle \phi^{n} \rangle \Big \vert^2  \Delta x \leq \sum_{i=0}^{N} \Big( \rho_{i}^{n} -e^{ \langle \phi^{n} \rangle } \Big) \Big( -\phi_{i}^{n} + \langle \phi^{n} \rangle \Big) \Delta x. \label{nedved}
\end{align}
Besides, using a Cauchy-Schwarz inequality for the right hand side we deduce the first bound
\begin{align}
\label{ineg_phi-avphi}
    &\alpha^{n} \Big \Vert \phi^{n} - \langle \phi^{n} \rangle \Big \Vert_{L^{2}(\T)} \leq \Big \Vert \rho^{n} - e^{\langle \phi^{n} \rangle} \Big \Vert_{L^{2}(\T)} \leq \Vert \rho^{n} \Vert_{L^{2}(\T)}  + e^{\langle \phi^{n} \rangle}\\
    &\leq \Vert \rho^{n} \Vert_{L^{2}(\T)} + \| e^{ \phi^n} \|_{L^{1}(\T)} \leq \Vert \rho^{n} \Vert_{L^{2}(\T)} +  \Vert \rho^{n} \Vert_{L^{1}(\T)}.
\end{align}
where we used Jensen's inequality and the bound \eqref{estimate_exp_lp}. 
We now prove the last estimate \eqref{estimate_semi_h1}. From \eqref{nedved}, we have (recalling $\psi_i=-\phi_i^n+\langle \phi^n\rangle$)
\begin{align}
    &\varepsilon^2 | \phi^{n} - \langle \phi^{n} \rangle |_{H^{1}(\T)}^2  \leq \|\rho^n - e^{\langle \phi^n\rangle}\|_{L^{2}(\T)} \|\phi^n- \langle \phi^n\rangle \|_{L^{2}(\T)}   \\
    &\leq \Big(\Vert \rho^{n} \Vert_{L^{2}(\T)} +  \Vert \rho^{n} \Vert_{L^{1}(\T)}\Big) \| \phi^{n} - \langle \phi^{n} \rangle \|_{L^{2}(\T)} \\
    &\leq \frac{1}{\alpha^{n} } \Big(\Vert \rho^{n} \Vert_{L^{2}(\T)} +  \Vert \rho^{n} \Vert_{L^{1}(\T)}\Big)^2.
\end{align}
where we used \eqref{l2_estim_phi} for the last inequality. It yields \eqref{estimate_semi_h1}.
\end{proof}
\end{pro}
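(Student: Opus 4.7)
My plan is to handle the six estimates sequentially, since each relies on the previous ones. The unifying strategy is to multiply the discrete Poisson equation by a well-chosen test function at each node $i$, sum over $i\in\{0,\dots,N\}$, and then apply the discrete integration-by-parts identity \eqref{ipp2} of Lemma \ref{ipp_lemma} to transfer the discrete Laplacian. The monotonicity of $s\mapsto e^{-s}$ is what drives essentially every bound.

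For \eqref{estimate_exp_lp} and \eqref{estimate_dx_exp_phi}, I would test the Poisson equation against $e^{(p-1)(-\phi_i^n)}$ with $p\in[1,\infty)$. After discrete IBP, the Laplacian term becomes a sum of products of the form $\bigl(\psi_{i+1}-\psi_i\bigr)\bigl(e^{(p-1)\psi_{i+1}}-e^{(p-1)\psi_i}\bigr)$ with $\psi_i=-\phi_i^n$; since $e^{(p-1)\cdot}$ is non-decreasing, each summand is non-negative, so I can drop it to bound $\|e^{-\phi^n}\|_{L^p}^p$ from above by $\sum_i \rho_i^n e^{(p-1)\psi_i}\Delta x$. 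A Hölder inequality in the duality $\ell^p$–$\ell^{p/(p-1)}$ finishes \eqref{estimate_exp_lp}, and keeping the Laplacian term instead yields \eqref{estimate_dx_exp_phi}. The case $p=1$ is an equality since $e^{0}=1$.

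For the maximum principle \eqref{maximum_principle}, I would argue by contradiction: at an index $i^\star$ realizing $\max_i(-\phi_i^n)$, the discrete second difference $-\psi_{i^\star+1}+2\psi_{i^\star}-\psi_{i^\star-1}\geq 0$, so the Poisson equation forces $e^{-\phi_{i^\star}^n}\leq \rho_{i^\star}^n$, i.e.\ $-\phi_{i^\star}^n\leq \max_i \log\rho_i^n$. The opposite bound is obtained symmetrically at the minimum. Then \eqref{estim_mean_value_phi} is immediate because $\langle -\phi^n\rangle$ is a convex combination of the $-\phi_i^n$'s (recall $\sum_i \Delta x=1$).

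For the $L^2$ and $H^1$ estimates \eqref{l2_estim_phi}--\eqref{estimate_semi_h1}, I would subtract $e^{\langle\phi^n\rangle}$ on both sides of the discrete Poisson equation and test against $\psi_i:=-\phi_i^n+\langle\phi^n\rangle$. The Laplacian contribution becomes $\varepsilon^2|\phi^n-\langle\phi^n\rangle|_{H^1(\T)}^2$ via \eqref{ipp2}. The exponential term, by the mean value theorem, takes the form $e^{\xi_i^n}\psi_i^2$ with $\xi_i^n$ strictly between $-\phi_i^n$ and $\langle\phi^n\rangle$; crucially, both endpoints lie in $[\min_i\log\rho_i^n,\max_i\log\rho_i^n]$ by \eqref{maximum_principle}--\eqref{estim_mean_value_phi}, so $e^{\xi_i^n}\geq \alpha^n$ uniformly in $i$. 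Cauchy--Schwarz on the right-hand side, combined with Jensen's inequality $e^{\langle\phi^n\rangle}\leq \|e^{\phi^n}\|_{L^1}$ and \eqref{estimate_exp_lp} for $p=1$, produces \eqref{l2_estim_phi}; reinserting this back into the $H^1$ identity gives \eqref{estimate_semi_h1}.

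The step I expect to be most delicate is the $L^2$ bound \eqref{l2_estim_phi}: controlling $e^{\langle\phi^n\rangle}$ by $\|\rho^n\|_{L^1}$ requires noticing that it is exactly $e^{\langle\phi^n\rangle}\leq\langle e^{\phi^n}\rangle$ (i.e.\ Jensen's for the \emph{positive} exponent, not $e^{-\phi^n}$) and then translating the $L^1$ bound on $e^{-\phi^n}$ into one on $e^{\phi^n}$ using the discrete mass conservation together with the equality case of \eqref{estimate_exp_lp}. Everything else is routine discrete calculus once the maximum principle is in hand.
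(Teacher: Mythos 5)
Your overall route coincides with the paper's proof for all six estimates: testing the discrete Poisson equation against $e^{(p-1)(-\phi^n_i)}$ and using the monotonicity of the exponential for \eqref{estimate_exp_lp}--\eqref{estimate_dx_exp_phi}, the argument at an extremal index for \eqref{maximum_principle}, convexity of the average for \eqref{estim_mean_value_phi}, and the test function $-\phi^n+\langle\phi^n\rangle$ with the mean value theorem, Cauchy--Schwarz and Jensen for \eqref{l2_estim_phi}--\eqref{estimate_semi_h1}. Up to the last point, everything you write is sound and matches the paper.

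There is, however, a genuine gap in your treatment of what you yourself identify as the delicate step. You propose to control the constant term via ``Jensen's for the \emph{positive} exponent'', i.e.\ $e^{\langle\phi^n\rangle}\leq\langle e^{\phi^n}\rangle$, and then to ``translate the $L^1$ bound on $e^{-\phi^n}$ into one on $e^{\phi^n}$'' using mass conservation and the equality case of \eqref{estimate_exp_lp}. No such translation exists: the identity $\|e^{-\phi^n}\|_{L^1(\T)}=\|\rho^n\|_{L^1(\T)}$ gives no control of $\|e^{+\phi^n}\|_{L^1(\T)}$ by $\|\rho^n\|_{L^1(\T)}$. A constant state already defeats it: if $\phi^n\equiv c$ with $e^{-c}=\rho<1$, then $\langle e^{\phi^n}\rangle=1/\rho>\rho=\|\rho^n\|_{L^1(\T)}$. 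The correct maneuver is to take the reference constant to be $e^{\langle-\phi^n\rangle}=e^{-\langle\phi^n\rangle}$ rather than $e^{+\langle\phi^n\rangle}$. This choice is forced twice over: first, it is the only one for which the mean value theorem turns the exponential contribution into a genuine square $e^{\xi^n_i}\,\psi_i^2$ with $\psi_i=-\phi^n_i+\langle\phi^n\rangle$ and $\xi^n_i$ between $-\phi^n_i$ and $\langle-\phi^n\rangle$ (both of which lie in $[\min_i\log\rho^n_i,\max_i\log\rho^n_i]$, whence $e^{\xi^n_i}\geq\alpha^n$); second, Jensen's inequality for the convex exponential applied to $-\phi^n$ then gives directly $e^{\langle-\phi^n\rangle}\leq\langle e^{-\phi^n}\rangle=\|\rho^n\|_{L^1(\T)}$ by the $p=1$ equality case of \eqref{estimate_exp_lp}, with no detour through $e^{+\phi^n}$. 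With that sign fixed, your argument closes exactly as in the paper.
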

For the existence proof, we will need a weaker $H^{1}$ estimate than \eqref{estimate_semi_h1} for the discrete potential which does not involve the constant $\alpha^{n}$ given in Proposition \ref{discrete_elliptic_estimates}. In this respect, we will use the discrete analogue of the Poincaré-Wirtinger inequality in $L^{2}(\T)$ that we prove right after.
\begin{lemma}{(Discrete Poincaré-Wirtinger inequality)}
\begin{align} \label{discrete_PW_ineq}
\forall u \in X(\mathcal{T}), \quad \Big \Vert u - \langle u \rangle \Big \Vert_{L^{2}(\T)}^2 \leq  \frac{1}{3} \big | u \big |_{H^{1}(\T)}^{2},
\end{align}
where $\displaystyle \langle u \rangle = \sum_{i=0}^{N} u_{i} \Delta x.$
\begin{proof} The proof mimicks the one in the continuous case. Let $u \in X(\mathcal{T}).$ We have
\begin{align}
&\sum_{i=0}^{N} \Big \vert u_{i} -\langle u \rangle \Big \vert^2 \Delta x = \sum_{i=0}^{N} \Big \vert u_{i} - \sum_{j=0}^{N} u_{j} \Delta x \Big \vert ^2 \Delta x \nonumber \\
&\leq \sum_{i=0}^{N} \Big \vert  \sum_{j=0}^{N}(u_{i}- u_{j}) \Delta x \Big \vert ^2 \Delta x \leq \sum_{i=0}^{N} \sum_{j=0}^{N} (u_{i}-u_{j})^2 \Delta x \Delta x \label{zizou}
\end{align} 
where the last inequality is obtained thanks to Jensen's inequality.
Then for $i,j \in \lbrace 0,\dots,N \rbrace$ we have, using a Cauchy-Schwarz inequality, that if $i > j$
\begin{align*}
\Big \vert \frac{u_{i}-u_{j}}{\Delta x} \Big \vert^2 =  \Big \vert \sum_{k=j}^{i-1} \frac{u_{k+1}-u_{k}}{\Delta x} \Big \vert ^2 \leq (i-j) \sum_{k=j}^{i-1} \Big \vert \frac{u_{k+1}-u_{k}}{\Delta x} \Big \vert^2 \leq (i-j) \sum_{k=0}^{N} \Big \vert \frac{u_{k+1}-u_{k}}{\Delta x} \Big \vert^2.
\end{align*}
while if $i \leq j$ 
\begin{align*}
\Big \vert \frac{u_{j}-u_{i}}{\Delta x} \Big \vert^2 =  \Big \vert \sum_{k=i}^{j-1} \frac{u_{k+1}-u_{k}}{\Delta x} \Big \vert ^2 \leq (j-i) \sum_{k=i}^{j-1} \Big \vert \frac{u_{k+1}-u_{k}}{\Delta x} \Big \vert^2 \leq (j-i) \sum_{k=0}^{N} \Big \vert \frac{u_{k+1}-u_{k}}{\Delta x} \Big \vert^2.
\end{align*}
Plugging this inequality in \eqref{zizou} we obtain
\begin{align*}
    \sum_{i=0}^{N} \Big \vert u_{i} -\langle u \rangle \Big \vert^2 \Delta x \leq  \Big( \sum_{k=0}^{N} \Big \vert \frac{u_{k+1}-u_{k}}{\Delta x}  \Big \vert^2  \Delta x \Big) \sum_{i=0}^{N} \sum_{j=0}^{N} \big \vert (i-j)\Delta x  \big \vert   \Delta x \Delta x.
\end{align*}
Then a direct computation yields 
\begin{align*}
    \displaystyle \sum_{i=0}^{N} \sum_{j=0}^{N} \vert i-j \vert \Delta x^3 = \sum_{i=0}^{N} i(i+1) \Delta x^{3} = \frac{1}{3} \frac{N(N+1)(N+2)}{(N+1)^3} = \frac{1}{3} \left ( 1 - \frac{2}{(N+1)^{3}} \right)
\end{align*}
which enables to conclude the proof.
\end{proof}
\end{lemma}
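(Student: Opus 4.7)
The plan is to mimic the standard continuous proof of the Poincaré--Wirtinger inequality on the periodic interval, replacing integration by summation with weights $\Delta x$ and the fundamental theorem of calculus by a telescoping sum of nearest-neighbour increments.

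First I would exploit that the discrete mean $\langle u\rangle$ is itself a weighted average (since $\sum_{j=0}^{N}\Delta x=1$) to write, for each $i\in\{0,\dots,N\}$,
\[
u_i - \langle u\rangle \;=\; \sum_{j=0}^{N}(u_i-u_j)\,\Delta x,
\]
and then apply Jensen's inequality (equivalently, a discrete Cauchy--Schwarz against the uniform probability measure on the grid) to obtain
\[
|u_i-\langle u\rangle|^2 \;\leq\; \sum_{j=0}^{N}|u_i-u_j|^2\,\Delta x.
\]
Multiplying by $\Delta x$ and summing over $i$ bounds $\|u-\langle u\rangle\|_{L^2(\T)}^2$ by the double sum $\sum_{i,j}|u_i-u_j|^2\Delta x^2$.

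The second step is to control each $|u_i-u_j|^2$ by the discrete $H^1$ semi-norm. For $i>j$ (the other case being symmetric) I would telescope $u_i-u_j=\sum_{k=j}^{i-1}(u_{k+1}-u_k)$ and apply the discrete Cauchy--Schwarz inequality to get
\[
|u_i-u_j|^2 \;\leq\; (i-j)\sum_{k=j}^{i-1}|u_{k+1}-u_k|^2 \;\leq\; |i-j|\,\Delta x\,|u|_{H^1(\T)}^2,
\]
where the inner sum has been extended to all $k\in\{0,\dots,N\}$ and the definition of the discrete $H^1$ semi-norm has been used in the form $\sum_{k}|u_{k+1}-u_k|^2=|u|_{H^1(\T)}^2\,\Delta x$.

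Combining the two bounds reduces the whole lemma to estimating the purely combinatorial weighted sum $\sum_{i,j}|i-j|\,\Delta x^3$, which I consider the only technical step of the proof. By symmetry $\sum_{i,j}|i-j|=\sum_{i=0}^{N}i(i+1)$, and the classical closed forms for $\sum i$ and $\sum i^2$ give $N(N+1)(N+2)/3$; together with $\Delta x=1/(N+1)$ this yields $\sum_{i,j}|i-j|\Delta x^3=\frac{N(N+2)}{3(N+1)^2}\leq\frac{1}{3}$, which is exactly the claimed constant. The main obstacle is thus ensuring that this crude-looking chain of Cauchy--Schwarz estimates still delivers the clean factor $\tfrac{1}{3}$; reassuringly, this matches the continuous value $\int_0^1\!\int_0^1 |x-y|\,dx\,dy=\tfrac13$, so no further sharpening can be expected from this approach.
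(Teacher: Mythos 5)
Your proposal is correct and follows essentially the same route as the paper: the Jensen bound $|u_i-\langle u\rangle|^2\leq\sum_j|u_i-u_j|^2\Delta x$, the telescoping-plus-Cauchy--Schwarz estimate $|u_i-u_j|^2\leq|i-j|\,\Delta x\,|u|_{H^1(\T)}^2$, and the combinatorial sum $\sum_{i,j}|i-j|=\sum_i i(i+1)=N(N+1)(N+2)/3$. Your final simplification $\sum_{i,j}|i-j|\Delta x^3=\frac{N(N+2)}{3(N+1)^2}=\frac13\bigl(1-\frac{1}{(N+1)^2}\bigr)\leq\frac13$ is in fact the correct one (the paper's displayed value $\frac13(1-\frac{2}{(N+1)^3})$ contains a minor algebraic slip), so nothing further is needed.
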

We are ready to prove Theorem \ref{main_result_1}.
\begin{proof} Consider the assumption of Theorem \ref{main_result_1}. The existence part is done by an induction argument. So we assume that for a fixed integer $n \in \lbrace 0,\dots,N_{T}-1 \rbrace$ we have been able to construct a solution $(\rho^{n},u^{n},\phi^{n}) \in X(\mathcal{T}) \times X(\mathcal{T}^{\star}) \times X(\mathcal{T})$ with $\rho^{n} > 0.$ We want to prove the existence of a solution at step $n+1$. 
We shall apply the Brouwer fixed-point theorem. Consider the ball of radius $M$ centered at $u^{n}$
$$
B_{M} := \Big\lbrace u \in X(\mathcal{T}^{*}) \: : \: \| u - u^{n} \|_{L^{2}(\T)} \leq M \Big\rbrace
$$
where $M > 0$ is to be fixed later.
Note that $B_{M}$ is closed for the $L^{2}$-topology and that $X(\mathcal{T}^{\star})$ is a finite dimensional space. 
\paragraph{Definition of a Mapping.}
We consider 
$T : B_{M} \longrightarrow X(\mathcal{T}^{\star})$ which is defined on $B_{M}$ in three steps. For $u \in B_{M}:$
\begin{itemize}
\item{Step 1:} Compute $ \overline{\rho}(u)$ that solves for $i \in \lbrace 0,..,N \rbrace:$
\begin{equation} \label{def:barho}
\frac{\overline{\rho}_{i} - \rho_{i}^{n}}{\Delta t} + \frac{ \mathcal{F}_{i+\frac{1}{2}}(u) - \mathcal{F}_{i-\frac{1}{2}}(u)}{\Delta x} = 0
\end{equation}
where 
\begin{equation} \label{ffflux}
    \mathcal{F}_{i+\frac{1}{2}}(u) = G(\overline{\rho}_{i},\overline{\rho}_{i+1},u_{i+\frac{1}{2}}).
\end{equation}
Since the flux \eqref{def:G} is linear in its two first arguments, the equation on $\overline{\rho}$ can be written under the form of a linear system $L(u) \overline{\rho} = \rho^{n}$ where $L$ is a M-matrix of size $N+1$ given for $i \in \lbrace 0,\dots,N \rbrace$ by:
\begin{align*}
&L_{i,i} = 1 + \frac{\Delta t}{\Delta x} \Big( g(u_{i+\frac{1}{2}}) +g(u_{i-\frac{1}{2}}) - u_{i-\frac{1}{2}} \Big),\\
&L_{i,i+1} = -\frac{\Delta t}{\Delta x} (g(u_{i+\frac{1}{2}}) - u_{i+\frac{1}{2}})\mathbf{1}_{i+1 \leq N},\\
&L_{i,i-1} = -\frac{\Delta t}{\Delta x}g(u_{i-\frac{1}{2}})\mathbf{1}_{i-1 \geq 0}.
\end{align*}
So $\overline{\rho}$ is uniquely defined and since $\rho^{n} > 0$ we have $\overline{\rho}  > 0$. The fact $L$ is a M-matrix comes from $L$ has positive diagonal terms ($L_{i,i}>0$) and non-positive off-diagonal terms ($L_{i,j}\leq 0$ for $i\neq j$) and is strictly diagonally dominant with respect to their columns. 
For the latter argument, we indeed have 
$L_{i,i} > \displaystyle \sum_{j\neq i} |L_{j,i}|=|L_{i-1,i}|+|L_{i+1,i}|$ since 
\begin{align*}
1+\frac{\Delta t}{\Delta x}\Big(g(u_{i+\frac{1}{2}}) +g(u_{i-\frac{1}{2}}) - u_{i-\frac{1}{2}} \Big) &> \frac{\Delta t}{\Delta x}\Big( g(u_{i-\frac{1}{2}})- u_{i-\frac{1}{2}}+ g(u_{i+\frac{1}{2}})\Big).
\end{align*} 
See \cite{claire, Gastaldo}. 
\item{Step 2:} Compute $\varphi = \varphi(\bar{\rho}(u))$ which solves the non linear discrete Poisson equation for $i \in  \lbrace 0,..,N\rbrace:$
$$
\varepsilon^2 (\Delta \varphi)_{i} + e^{-\varphi_i} = \overline{\rho}_i.
$$
Existence and uniqueness for this non linear equation is classical and can be proven for example by minimization of a strictly convex functional. Mimicking exactly the computation to obtain the estimate \eqref{nedved}, we get using a Cauchy-Schwarz inequality combined with the Poincaré-Wirtinger inequality \eqref{discrete_PW_ineq},
\begin{align*} 
    | \varphi |_{H^{1}(\T)} \leq \frac{1}{ \varepsilon^2 \sqrt{3}}  \| \bar{\rho} - e^{\langle \varphi \rangle} \|_{L^{2}(\T)},  
\end{align*}
and using \eqref{ineg_phi-avphi}, we obtain 
\begin{align} \label{nico}
    | \varphi |_{H^{1}(\T)} \leq \frac{1}{ \varepsilon^2 \sqrt{3}}  \| \bar{\rho} - e^{\langle \varphi \rangle} \|_{L^{2}(\T)} \leq \frac{1}{\varepsilon^2\sqrt{3}}  \left( \|\bar{\rho}\|_{L^{2}(\T)} +\| \bar{\rho}\|_{L^{1}(\T)} \right) \leq C\Big(\Delta x, \|\rho^{n}\|_{L^{1}(\T)}, \frac{1}{\varepsilon^2}\Big), 
\end{align}
where  $C(\Delta x, \|\rho^{n}\|_{L^{1}(\T))},\frac{1}{\varepsilon^2}) > 0$ is constant that depends only on $\Delta x$ and $\| \rho^{n} \|_{L^{1}(\T)}$. This constant is obtained thanks to the equivalence of norms in finite dimension, the positivity of $\bar{\rho}$ and the conservation of mass given by \eqref{def:barho}.

\item{Step 3:} Compute $v = v(\bar{\rho}(u),\varphi(\bar{\rho}(u)),u)$ which solves for $i \in \lbrace 0,...,N\rbrace:$
\begin{align} 
\label{taratatu}
\frac{ \overline{\rho}_{i+\frac{1}{2}} v_{i+\frac{1}{2}} - \rho_{i+\frac{1}{2}}^{n} u_{i+\frac{1}{2}}^{n} }{\Delta t} + \frac{{\cal Q}_{i+1}(u) v_{i+1}-{\cal Q}_{i}(u) v_{i}}{\Delta x} = \bar{\tilde{\rho}}_{i+\frac{1}{2}} \delta (\varphi)_{i+\frac{1}{2}},
\end{align}
where
\begin{align}
\overline{\rho}_{i+\frac{1}{2}} = \frac{ \overline{\rho}_{i} + \overline{\rho}_{i+1}}{2}, \quad Q_{i}(u) = \frac{ \mathcal{F}_{i+\frac{1}{2}}(u) +\mathcal{F}_{i-\frac{1}{2}}(u) }{2}, 
\quad v_{i} = \begin{cases}
        v_{i-\frac{1}{2}} \textnormal{ if } \mathcal{Q}_{i}(u) \geq 0,\\
        v_{i+\frac{1}{2}} \textnormal{ if }\mathcal{Q}_{i}(u) < 0
    \end{cases}
\end{align}
and
\begin{equation} \label{tilde_rrro}
\displaystyle \bar{\tilde{\rho}}_{i+\frac{1}{2}} =
\begin{cases}
\frac{ G(\overline{\rho}_{i},\overline{\rho}_{i+1},u_{i+\frac{1}{2}}) - G(\overline{\rho}_{i},\overline{\rho}_{i+1},0) }{ u_{i+\frac{1}{2}} }  \;\; \textnormal{ if } u_{i+\frac{1}{2}} \neq 0,\\
        \bar{\rho}_{i+1} -(\bar{\rho}_{i+1}-\bar{\rho}_{i})g'(0) \;\;\;\;\;\;\;\textnormal{ if } u_{i+\frac{1}{2}} = 0.
    \end{cases}
\end{equation}
Note that $v$ solves a linear system which is invertible.
\end{itemize}
Step 1, Step 2 and Step 3 are well defined, so is $T$ on $B_{M}.$ $T$ is moreover continuous on $B_{M}$ notably because the flux given in \eqref{ffflux} is continuous, the flux part in \eqref{taratata} is also continuous and the forcing term in \eqref{taratata} has been designed in such a way that \eqref{tilde_rrro} is in particular continuous with respect to $u.$
\paragraph{Stability of $B_{M}.$}
We want to prove that $T(B_{M}) \subset B_{M}$ for a well-chosen $M > 0$.
We first mimick the energy estimates as in Proposition \ref{energy_decay}. Note that because the right hand side in \eqref{taratatu} is now explicit, we do not have the energy decay. We have instead, after multiplying \eqref{taratatu} by $v_{i+1/2}$ 
\begin{align}\label{bangkok}
    \sum_{i=0}^{N} \rho_{i+\frac{1}{2}}^{n} \frac{(v_{i+\frac{1}{2}} - u_{i+\frac{1}{2}}^{n})^2}{2} \Delta x +  \mathcal{H}(\bar{\rho},v,\varphi) \leq \mathcal{H}(\rho^{n},u^{n},\phi^{n}) + \Delta t \sum_{i=0}^{N} \bar{\tilde{\rho}}_{i+\frac{1}{2}} (v_{i+\frac{1}{2}}-u_{i+\frac{1}{2}}) \delta( \varphi)_{i+\frac{1}{2}} \Delta x.
\end{align}
The first term comes from the implictness of the discrete time derivative (its equivalent, is the third term in \eqref{def:S}).
We need to estimate the residual term
$P = \Delta t\sum_{i=0}^{N} \bar{\tilde{\rho}}_{i+\frac{1}{2}} (v_{i+\frac{1}{2}}-u_{i+\frac{1}{2}}) (\delta \phi)_{i+\frac{1}{2}} \Delta x. $ Using the definition of $G$ given in \eqref{def:G} and the fact that $g$ is Lipschitz continuous, we have for $i \in \lbrace 0,...,N \rbrace:$
\begin{align}
    | \bar{\tilde{\rho}}_{i+\frac{1}{2}} | \leq (1 +2\textnormal{Lip}(g))\| \bar \rho \|_{L^{\infty}(\T)} \leq \frac{(1+2\textnormal{Lip}(g))}{\Delta x} \| \bar{\rho} \|_{L^{1}(\T)} \leq \frac{(1+2\textnormal{Lip}(g))}{\Delta x} \| \rho^{n} \|_{L^{1}(\T)},
\end{align}
where we used the equivalence of norms  in finite dimension, the conservation of the positivity and the total mass given by \eqref{def:barho}. Using a Hölder inequality and the elliptic estimate \eqref{nico} we obtain
\begin{align}
    |P| \leq \Delta t C'\Big( \Delta x, \textnormal{Lip}(g),\|\rho^{n}\|_{L^{1}(\T)}, \frac{1}{\varepsilon^2} \Big) \| v - u \|_{L^{2}(\T)}
\end{align}
where $C'\Big( \Delta x, \textnormal{Lip}(g),\|\rho^{n}\|_{L^{1}(\T)}, \frac{1}{\varepsilon^2} \Big) > 0$ is a constant that depends only on $\Delta x$, $\varepsilon$, $\textnormal{Lip}(g)$ and $\| \rho^{n} \|_{L^{1}(\T)}.$ We set $C^{n} :=C'\Big( \Delta x, \textnormal{Lip}(g),\|\rho^{n}\|_{L^{1}(\T)}, \frac{1}{\varepsilon^2} \Big)$ in the remaining part of the proof. Using a triangular inequality and the fact that $u \in B_{M}$, we have the following estimate for $P$,
\begin{align} \label{nicosie}
    | P | \leq \Delta t C^{n} \| v-u^{n}\|_{L^{2}(\T)} + \Delta t C^{n} M.
\end{align}
Besides, we observe that the discrete energy functional is bounded below,
\begin{align} \label{below_h}
    \mathcal{H}(\bar{\rho},v,\varphi) \geq \bar{h}, \;\;\; \mbox{ where } \;\;\; \bar{h} = \underset{ \R}\min \, h > -\infty. 
\end{align}
 Combining \eqref{nicosie} and \eqref{below_h} with \eqref{bangkok}, we obtain that
\begin{align} \label{piste_rouge}
    \frac{\underset{ i \in \lbrace 0,..,N \rbrace}\min \rho^{n}_{i+\frac{1}{2}}}{2} \| v -u^{n} \|_{L^{2}(\T)}^2 - \Delta t C^{n} \| v -u^{n} \|_{L^{2}(\T)} - (\Delta t C^{n} M + \mathcal{H}(\rho^{n},u^{n},\phi^{n}) - \bar{h}) \leq 0,
\end{align}
where we recall that $\mathcal{H}(\rho^{n},u^{n},\phi^{n}) -\bar{h} \geq 0$ since $\sum_{i = 0}^{N} (h(\phi_{i}^{n}) - \bar{h} )\Delta x \geq 0$ by definition of $\bar{h}.$ Note that \eqref{piste_rouge} is a polynomial of second degree in $\| v -u^{n} \|$ and the inequality \eqref{piste_rouge} tells us that this polynomial is non positive on $\R^{+}.$ It has two roots of opposite sign and the non negative root is given by
$$
X = \frac{\Delta t C^{n} + \sqrt{ (\Delta t C^{n})^2 + 2( \Delta t C^{n} M + \mathcal{H}(\rho^{n},u^{n},\phi^{n}) - \bar{h})\underset{i \in \lbrace 0,..,N \rbrace}\min \rho^{n}_{i+\frac{1}{2}}}}{\underset{ i \in \lbrace 0,...,N \rbrace}\min \rho^{n}_{i+\frac{1}{2}}}. 
$$
Note that that the denominator is fixed while the numerator behaves as $O(\sqrt{M})$ as $M \rightarrow +\infty$. 
Therefore, we claim that there exists $M > 0$ large enough, which possibly depends on $(\rho^{n}, u^{n},\phi^{n}, \Delta x, \Delta t, \varepsilon)$ such that $X \leq M$. Thus  \eqref{piste_rouge} implies that $\| v-u^{n} \|_{L^{2}(\T)} \leq X \leq M.$ We have proven that $T(B_{M}) \subset B_{M}.$ The Brouwer-fixed point theorem thus applies. By induction we deduce the existence of a solution for $n \in \lbrace 0,\dots,N_{T} \rbrace.$ The estimate \eqref{positivity} is a consequence of the positivity of the scheme. The estimate \eqref{conservativity} is obtained by summation of the discrete continuity equation and the positivity of $\rho^{n}$. The energy decay \eqref{energy_estimates} is just a direct application of Proposition \ref{energy_decay}. Finally the discrete elliptic estimates are a consequence of Proposition \ref{discrete_elliptic_estimates}.
\end{proof}

\subsection{ Discrete modulated energy estimate}\label{sec:discrete_modulated_energy}
This section is devoted to the proof of Theorem \ref{main_result_2}.
The first step consists in establishing the discrete evolution law for the modulated energy \eqref{modulated_energy_discrete}.
\begin{lemma}{(Evolution of the discrete modulated energy)} Let $(\rho^{n},u^{n},\phi^{n})_{n = 0,\dots, N_{T}} \subset X(\mathcal{T}) \times X(\mathcal{T}^{\star}) \times X(\mathcal{T}) $ a solution to \eqref{EPBD} such that $\rho^{n} > 0$. Assume $\phi^{0} \in X(\mathcal{T})$ verifies the discrete Poisson equation initially. Let $(\bar{u},\bar{\phi}) \in X(\mathcal{T}^{\star}) \times X(\mathcal{T})$ a constant state. Then the solution satisfies for $0 \leq n \leq N_{T}-1,$
\begin{align} \label{mod_energy_n+1}
    {\cal E}(\rho^{n+1},u^{n+1},\phi^{n+1} | \bar{u}, \bar{\phi}) = {\cal E}(\rho^{n},u^{n},\phi^{n} |  \bar{u}, \bar{\phi})  - \Delta t \tau^{n+1} - \Delta t \bar{u} \sum_{i=0}^{N}\tilde{\rho}^{n+1}_{i+\frac{1}{2}} \delta (\phi^{n+1})_{i+\frac{1}{2}} \Delta x,
\end{align}
where we recall that $\tau^{n+1} \geq 0$ is given by \eqref{tau_n}.
\begin{proof} Let $0 \leq n \leq N_{T}-1$. We set $\bar{\rho} = e^{-\bar{\phi}}.$ Thanks to the decomposition \eqref{discrete_decomp_E} we have
\begin{align*}
    & {\cal E}(\rho^{n+1},u^{n+1},\phi|\bar{u}, \bar{\phi}) = \mathcal{H}(\rho^{n+1},u^{n+1},\phi^{n+1}) + {\cal E}_{kin}(\rho^{n+1}, u^{n+1}|\bar{u}) - {\cal E}_{int}(\phi^{n+1}|\bar{\phi})\\
     & = \mathcal{H}(\rho^{n},u^{n},\phi^{n}) - \Delta t \tau^{n+1} + {\cal E}_{kin}(\rho^{n+1}, u^{n+1}|\bar{u}) - {\cal E}_{int}(\phi^{n+1}|\bar{\phi}), 
\end{align*}
where we used the energy decay \eqref{energy_estimates}.
Besides, a direct computation gives
\begin{align*}
{\cal E}_{int}(\phi^{n+1} | \bar{\phi}) = \sum_{i=0}^{N} \Big( \tilde{h}(\bar{\rho})+\tilde{h}'(\bar{\rho})(e^{-\phi_{i}^{n+1}}-\bar{\rho})  \Big) \Delta x. 
\end{align*}
Recall that $\bar{\rho}$ is constant. So, using the discrete Poisson  we have $e^{-\phi_{i}^{n+1}} - \bar{\rho} = \rho_{i}^{n+1} - \bar{\rho} - \varepsilon^2 \Delta(\phi^{n+1})_{i}.$ 
So with the periodicity, 
\begin{align*}
{\cal E}_{int}(\phi^{n+1} | \bar{\phi}) = \sum_{i=0}^{N} \Big( \tilde{h}(\bar{\rho})+\tilde{h}'(\bar{\rho})(\rho_{i}^{n+1}-\bar{\rho})  \Big) \Delta x = \sum_{i=0}^{N} \Big( \tilde{h}(\bar{\rho})+\tilde{h}'(\bar{\rho})(\rho_{i}^{n}-\bar{\rho})  \Big) \Delta x = {\cal E}_{int}(\phi^{n} | \bar{\phi}),
\end{align*}
where the last equality is obtained thanks to the mass conservation. We compute the evolution of the  modulated kinetic energy. A direct computation yields
\begin{align*}
    \frac{{\cal E}_{kin}(\rho^{n+1},u^{n+1} | \bar{u}) -  {\cal E}_{kin}(\rho^{n},u^{n} | \bar{u})}{\Delta t} = \sum_{i=0}^{N}  \frac{\bar{u}^2}{2} \Big( \frac{\rho_{i+\frac{1}{2}}^{n+1} - \rho_{i+\frac{1}{2}}^{n}}{\Delta t} \Big) \Delta x - \sum_{i=0}^{N} \bar{u} \Big( \frac{\rho_{i+\frac{1}{2}}^{n+1}u_{i+\frac{1}{2}}^{n+1} - \rho_{i+\frac{1}{2}}^{n} u_{i+\frac{1}{2}}^{n}}{\Delta t} \Big) \Delta x.
\end{align*}
Using the discrete continuity equation and the discrete momentum we obtain
\begin{align*}
 \frac{{\cal E}_{kin}(\rho^{n+1},u^{n+1} | \bar{u}) -  {\cal E}_{kin}(\rho^{n},u^{n} | \bar{u})}{\Delta t} = - \bar{u} \sum_{i=0}^{N} \tilde{\rho}^{n+1}_{i+\frac{1}{2}} \delta( \phi^{n+1})_{i+\frac{1}{2}} \Delta x.
\end{align*}
Then gathering all the terms together, we obtain the expected equality.
\end{proof}
\end{lemma}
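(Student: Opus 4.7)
The plan is to start from the decomposition \eqref{discrete_decomp_E} of the discrete modulated energy, namely
$$
{\cal E}(\rho,u,\phi | \bar u,\bar\phi) = \mathcal{H}(\rho,u,\phi) + {\cal E}_{kin}(\rho,u | \bar u) - {\cal E}_{int}(\phi | \bar\phi),
$$
and to treat each of the three summands in turn, comparing its value at times $t_n$ and $t_{n+1}$. The $\mathcal{H}$ part is handled by directly invoking Proposition \ref{energy_decay}, which yields $\mathcal{H}(\rho^{n+1},u^{n+1},\phi^{n+1}) - \mathcal{H}(\rho^{n},u^{n},\phi^{n}) = - \Delta t\, \tau^{n+1}$, so that all the dissipation needed in \eqref{mod_energy_n+1} is already absorbed there. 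It only remains to establish that ${\cal E}_{int}(\phi^{n+1}|\bar\phi) - {\cal E}_{int}(\phi^{n}|\bar\phi) = 0$ and to compute ${\cal E}_{kin}(\rho^{n+1},u^{n+1}|\bar u) - {\cal E}_{kin}(\rho^{n},u^{n}|\bar u)$.

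For the internal term I would exploit that $(\bar u,\bar\phi)$ is a \emph{constant} state, so that $\tilde h(\bar\rho)$ and $\tilde h'(\bar\rho)$ (with $\bar\rho = e^{-\bar\phi}$) come out of the sum. The discrete Poisson equation at time $t_{n+1}$ gives $e^{-\phi_i^{n+1}} = \rho_i^{n+1} - \varepsilon^2 (\Delta \phi^{n+1})_i$; summing the discrete Laplacian over the cells and using periodicity causes it to telescope to zero. What remains is $\tilde h'(\bar\rho) \sum_i (\rho_i^{n+1}-\bar\rho)\Delta x$ up to constants, and a comparison at $t_n$ shows the difference reduces to $\tilde h'(\bar\rho)\sum_i (\rho_i^{n+1} - \rho_i^n)\Delta x$, which vanishes thanks to conservation of discrete mass \eqref{conservativity}.

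For the kinetic modulated part, writing
$$
{\cal E}_{kin}(\rho^{n+1},u^{n+1}|\bar u) - {\cal E}_{kin}(\rho^{n},u^{n}|\bar u)
= \frac{\bar u^2}{2}\sum_{i=0}^{N}(\rho_{i+\frac12}^{n+1}-\rho_{i+\frac12}^{n})\Delta x - \bar u\sum_{i=0}^{N}(\rho_{i+\frac12}^{n+1}u_{i+\frac12}^{n+1}-\rho_{i+\frac12}^{n}u_{i+\frac12}^{n})\Delta x,
$$
the first sum vanishes by using the dual-cell continuity equation (which follows from the primal one, via \eqref{rho_dual}) and summing a telescoping flux difference against periodic boundary conditions. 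For the second sum I would substitute the discrete momentum equation of \eqref{EPBD}: the convective flux $\mathcal{F}_i u_i$ again telescopes to $0$, so the only surviving contribution is $\Delta t \sum_i \tilde\rho_{i+\frac12}^{n+1}\,\delta(\phi^{n+1})_{i+\frac12}\Delta x$, producing the last term in \eqref{mod_energy_n+1}.

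Summing the three contributions then yields the desired identity. I do not foresee a genuine obstacle here; the proof is essentially algebraic, and the only point requiring care is that the non-trivial cancellation in ${\cal E}_{int}$ uses both the \emph{discrete} Poisson equation (to eliminate $e^{-\phi^{n+1}}$ in favour of $\rho^{n+1}$) and the periodicity-driven telescoping of the discrete Laplacian, after which mass conservation finishes the job. All other manipulations are direct and mirror the corresponding continuous computations.
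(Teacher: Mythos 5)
Your proposal is correct and follows essentially the same route as the paper's proof: the decomposition \eqref{discrete_decomp_E}, Proposition \ref{energy_decay} for the $\mathcal{H}$ part, the discrete Poisson equation plus periodicity plus mass conservation to show $\mathcal{E}_{int}$ is constant in $n$, and the dual-mesh continuity and momentum equations (with telescoping of the fluxes) to reduce the kinetic part to the force term. No gaps.
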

To obtain a close estimate for the equation \eqref{mod_energy_n+1},  we need to control the term
\begin{align} \label{work_of_force}
    W^{n+1} = \sum_{i=0}^{N} \tilde{\rho}^{n+1}_{i+\frac{1}{2}} \delta (\phi^{n+1})_{i+\frac{1}{2}} \Delta x.
\end{align}
In this regard we have.
\begin{lemma}{(Control on W)} \label{control_W}
Let $(\rho^{n},u^{n},\phi^{n})_{n = 0,\dots, N_{T}} \subset X(\mathcal{T}) \times X(\mathcal{T}^{\star}) \times X(\mathcal{T}) $ a solution to \eqref{EPBD} such that $\rho^{n} > 0$. Assume $\phi^{0} \in X(\mathcal{T})$ verifies the discrete Poisson equation initially. Then we have for $n \in \lbrace 0,\dots,N_{T}-1 \rbrace,$
\begin{align} \label{decomp_W}
    &W^{n+1} = W_{1}^{n+1} + W_{2}^{n+1},\\
    &W_{1}^{n+1}  = - \sum_{i=0}^{N}(\rho_{i+1}^{n+1}-\rho_{i}^{n+1}) \hat{g}(u_{i+\frac{1}{2}}^{n+1}) \delta (\phi^{n+1})_{i+\frac{1}{2}} \Delta x ,\\
    &W_{2}^{n+1} =\sum_{i=0}^{N} \rho_{i+1}^{n+1} \delta (\phi^{n+1})_{i+\frac{1}{2}}  \Delta x,
\end{align}
where $\hat{g}(u) = \begin{cases}
   \displaystyle \frac{g(u) - g(0)}{u} \; \mbox{ if } \; u \neq 0, \\
    g'(0) \quad\quad\quad \,\, \mbox{ if } \; u = 0.
\end{cases}$ 
Moreover, we have the following estimates 
\begin{align} \label{estimate_W1}
    | W_{1}^{n+1}| \leq \frac{8 \textnormal{Lip}(g)}{\Delta x} {\cal E}(\rho^{n+1},u^{n+1},\phi^{n+1} | \bar{u}, \bar{\phi}) + \frac{\textnormal{Lip(g)}}{\Delta x} \sum_{i=0}^{N} \Big |( \phi_{i+1}^{n} - \phi_{i}^{n}) (e^{\phi_{i+1}^{n}} - e^{-\phi_{i}^{n}} ) \Big | \Delta x,
\end{align}
\begin{align}\label{estimate_W2}
    \big | W_{2}^{n+1} \big | \leq \frac{4}{\Delta x} {\cal E}\big( \rho^{n+1},u^{n+1},\phi^{n+1}| \bar{u},\bar{\phi}\big) + \frac{2\Delta x}{\varepsilon^2} \| \rho^{n+1} \|_{L^{\infty}(\T)} {\cal E}(\rho^{n+1},u^{n+1},\phi^{n+1} | \bar{u},\bar{\phi}). 
\end{align}

\begin{proof} Let $0 \leq n \leq N_{T}-1.$
Using the definitions  \eqref{rho_upwind}, \eqref{def:G} and the definition of $\hat{g}$, $W^{n+1}$ can be written as 
\begin{align*}
W^{n+1} &= - \sum_{i=0}^{N}(\rho_{i+1}^{n+1}-\rho_{i}^{n+1}) \hat{g}(u_{i+\frac{1}{2}}^{n+1}) \delta (\phi^{n+1})_{i+\frac{1}{2}} \Delta x + \sum_{i=0}^{N} \rho_{i+1}^{n+1} \delta( \phi^{n+1})_{i+\frac{1}{2}}  \Delta x\\
\end{align*}

Let observe that $\hat{g}$ 
is a continuous and bounded function with $\| \hat{g} \|_{L^{\infty}(\R)} \leq \textnormal{Lip}(g).$ It shows the decomposition \eqref{decomp_W}.
We now estimate each term separately.
Using the discrete Poisson equation, we decompose the first term as
\begin{align*}
W_{1}^{n+1} = W_{1,1}^{n+1} + W_{1,2}^{n+1}, \\
W_{1,1}^{n+1} = - \sum_{i=0}^{N} \varepsilon^2 \left( \Delta(\phi^{n+1})_{i+1} - \Delta (\phi^{n+1})_{i} \right) \delta( \phi^{n+1})_{i+\frac{1}{2}} \hat{g}(u_{i+\frac{1}{2}}^{n+1}) \Delta x,\\
W_{1,2}^{n+1} = - \sum_{i=0}^{N} \left( e^{-\phi_{i+1}^{n+1}} - e^{-\phi_{i}^{n+1}} \right) \delta (\phi^{n+1})_{i+\frac{1}{2}} \hat{g}(u_{i+\frac{1}{2}}^{n+1}) \Delta x.
\end{align*}
To estimate $W_{1,1}^{n+1}$, we use a discrete integration by parts then we get
\begin{align*}
&W_{1,1}^{n+1} = \sum_{i=0}^{N} \varepsilon^2 \Delta (\phi^{n+1})_{i} \left( \hat{g}(u_{i+\frac{1}{2}}^{n+1}) \delta (\phi^{n+1})_{i+\frac{1}{2}} -\hat{g}(u_{i-\frac{1}{2}}^{n+1}) \delta (\phi^{n+1})_{i-\frac{1}{2}} \right) \Delta x \\
&= \frac{1}{\Delta x} \sum_{i=0}^{N} \varepsilon^2 \big( \delta (\phi^{n+1})_{i+\frac{1}{2}} - \delta (\phi^{n+1})_{i-\frac{1}{2}} \big)  \left( \hat{g}(u_{i+\frac{1}{2}}^{n+1}) \delta (\phi^{n+1})_{i+\frac{1}{2}} -\hat{g}(u_{i-\frac{1}{2}}^{n+1}) \delta (\phi^{n+1})_{i-\frac{1}{2}} \right) \Delta x.
\end{align*}
Expanding the product, using a Young inequality and a translation of indices, we obtain
\begin{align} \label{estimate_W_1_1}
    | W_{1,1}^{n+1}| \leq \frac{8 \textnormal{Lip}(g)}{\Delta x} \sum_{i=0}^{N} \frac{\varepsilon^2}{2} \big \vert  \delta (\phi^{n+1})_{i+\frac{1}{2}}|^2 \Delta x \leq \frac{8 \textnormal{Lip}(g)}{\Delta x} {\cal E}(\rho^{n+1},\phi^{n+1},u^{n+1} |  \bar{u}, \bar{\phi}).
\end{align}
As for $W_{1,2}^{n+1},$ we obtain readily
\begin{align*}
| W_{1,2}^{n+1} | \leq \frac{ \textnormal{Lip(g)}}{\Delta x} \sum_{i=0}^{N} \Big | (\phi_{i+1}^{n} - \phi_{i}^{n}) (e^{\phi_{i+1}^{n}} - e^{-\phi_{i}^{n}} ) \Big | \Delta x.
\end{align*}
We thus infer the estimate \eqref{estimate_W1}.

We now treat the term $W_2^{n+1}$. We first use the discrete Poisson equation to get
\begin{align*}
    W_{2}^{n+1} = \sum_{i=0}^{N} \varepsilon^2 \Delta (\phi^{n+1})_{i+1} \delta (\phi^{n+1})_{i+\frac{1}{2}} \Delta x + \sum_{i=0}^{N} e^{-\phi_{i+1}^{n+1}} \delta (\phi^{n+1})_{i+\frac{1}{2}}  \Delta x.
\end{align*}
We set
\begin{align*}
    &W_{2,1}^{n+1} =  \sum_{i=0}^{N} \varepsilon^2 \Delta (\phi^{n+1})_{i+1} \delta (\phi^{n+1})_{i+\frac{1}{2}} \Delta x \;\; \mbox{ and } \;\; W_{2,2}^{n+1} = \sum_{i=0}^{N} e^{-\phi_{i+1}^{n+1}} \delta (\phi^{n+1})_{i+\frac{1}{2}}  \Delta x.
\end{align*}
We are going to bound each term separately. We have,
\begin{align*}
W_{2,1}^{n+1}  = \sum_{i=0}^{N} \frac{\varepsilon^2}{\Delta x} \left( {\delta( \phi^{n+1})_{i+\frac{3}{2}} - \delta (\phi^{n+1})_{i+\frac{1}{2}}} \right) \delta (\phi^{n+1})_{i+\frac{1}{2}} \Delta x.
\end{align*}
Using a Young inequality and a translation of indices we obtain the following bound for $W_{2,1}^{n+1},$
\begin{align*}
    \big |  W_{2,1}^{n+1} \big | \leq \frac{2}{\Delta x} \sum_{i=0}^{N} \varepsilon^2 \vert \delta (\phi^{n+1})_{i+\frac{1}{2}}\vert^2 \Delta x \leq \frac{4}{\Delta x} {\cal E}\big( \rho^{n+1},\phi^{n+1},u^{n+1}| \bar{u},\bar{\phi}\big).
\end{align*}
As for the  term $W_{2,2}^{n+1}$, we use a Taylor-Lagrange expansion: for each $i \in \lbrace 0,\dots,N\rbrace$ there exists $\zeta_{i}^{n+1} \in \big( \min(\phi_{i+1}^{n+1},\phi_{i}^{n+1}),\max(\phi_{i+1}^{n+1},\phi_{i}^{n+1}) \big)$
\begin{align*}
    e^{-\phi_{i}^{n+1}} = e^{-\phi_{i+1}^{n+1}}  - e^{-\phi_{i+1}^{n+1}}(\phi_{i+1}^{n+1}-\phi_{i}^{n+1}) + e^{-\zeta_{i}^{n+1}}\frac{(\phi_{i+1}^{n+1}-\phi_{i}^{n+1})^2}{2}.
\end{align*}
Using the periodicity, we obtain after summation that 
\begin{align*}
W_{2,2}^{n+1} = -\frac{1}{\Delta x} \sum_{i=0}^{N}  e^{-\zeta_{i}^{n+1}}\frac{(\phi_{i+1}^{n+1}-\phi_{i}^{n+1})^2}{2} \Delta x = - \Delta x \sum_{i=0}^{N} e^{-\zeta_{i}^{n+1}} |\delta (\phi^{n+1})_{i+\frac{1}{2}}|^2 \Delta x.
\end{align*}
Using the maximum principle \eqref{maximum_principle} we obtain the bound $e^{-\xi_{i}^{n+1}} \leq \| \rho^{n+1}\|_{L^{\infty}(\T)}$ for $i \in \lbrace 0,\dots,N\rbrace$, so that 
we deduce
\begin{align*}
    \big| W_{2,2}^{n+1} \big| \leq \frac{2\Delta x}{\varepsilon^2} \| \rho^{n+1} \|_{L^{\infty}(\T)} {\cal E}(\rho^{n+1},\phi^{n+1},u^{n+1} | \bar{u},\bar{\phi}).
\end{align*}
So that eventually,
\begin{align*}
\big | W_{2}^{n+1} \big | \leq \frac{4}{\Delta x} {\cal E}\big( \rho^{n+1},\phi^{n+1},u^{n+1}| \bar{u},\bar{\phi}\big) + \frac{2\Delta x}{\varepsilon^2} \| \rho^{n+1} \|_{L^{\infty}(\T)} {\cal E}(\rho^{n+1},\phi^{n+1},u^{n+1} | \bar{u},\bar{\phi})
\end{align*}
which is the expected estimate \eqref{estimate_W2}
\end{proof}  
\end{lemma}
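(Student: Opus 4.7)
The plan begins with the algebraic decomposition of $W^{n+1}$. Starting from the alternative expression \eqref{rho_upwind_2} for $\tilde{\rho}^{n+1}_{i+\frac12}$, I would rewrite it as $\tilde{\rho}^{n+1}_{i+\frac12} = \rho^{n+1}_{i+1} - (\rho^{n+1}_{i+1}-\rho^{n+1}_i)\hat{g}(u^{n+1}_{i+\frac12})$, which splits the sum into $W_2^{n+1}$ (the ``centered'' part) and $W_1^{n+1}$ (the ``upwind correction''), giving \eqref{decomp_W}. At this stage I would also record the pointwise bound $\|\hat{g}\|_{L^\infty(\R)} \le \textnormal{Lip}(g)$, which is immediate from the Lipschitz property of $g$ and will be used repeatedly.

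To estimate $W_1^{n+1}$, the idea is to trade the density difference $\rho^{n+1}_{i+1}-\rho^{n+1}_i$ against $\phi^{n+1}$ via the discrete Poisson equation, writing $\rho^{n+1}_{i+1}-\rho^{n+1}_i = \varepsilon^2(\Delta\phi^{n+1}_{i+1}-\Delta\phi^{n+1}_i) + (e^{-\phi^{n+1}_{i+1}}-e^{-\phi^{n+1}_i})$. This yields a decomposition $W_1^{n+1} = W_{1,1}^{n+1}+W_{1,2}^{n+1}$. For $W_{1,1}^{n+1}$, a discrete integration by parts transfers the outer difference onto $\hat{g}(u^{n+1})\delta\phi^{n+1}$; after expansion and a Young inequality, every resulting term is controlled by $\varepsilon^2|\delta\phi^{n+1}_{i\pm\frac12}|^2$, which in turn is dominated by $2\cal E(\rho^{n+1},u^{n+1},\phi^{n+1}|\bar{u},\bar{\phi})$ thanks to the electric energy part of the modulated energy. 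This produces the factor $8\textnormal{Lip}(g)/\Delta x$. For $W_{1,2}^{n+1}$, no integration by parts is needed: the Cauchy--Schwarz type pointwise bound together with $|\hat g| \le \textnormal{Lip}(g)$ directly gives the remainder term in \eqref{estimate_W1}.

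The estimate on $W_2^{n+1}$ proceeds along the same philosophy: the discrete Poisson equation substitutes $\rho^{n+1}_{i+1} = \varepsilon^2\Delta\phi^{n+1}_{i+1}+e^{-\phi^{n+1}_{i+1}}$, splitting $W_2^{n+1} = W_{2,1}^{n+1}+W_{2,2}^{n+1}$. The linear term $W_{2,1}^{n+1}$ is a discrete analogue of $\int \partial_{xx}\phi\,\partial_x\phi\,dx$, which at the continuous level vanishes; at the discrete level a translation of indices plus Young's inequality bounds it by a constant multiple of $\varepsilon^2|\phi^{n+1}|_{H^1}^2/\Delta x$, hence by $\cal E/\Delta x$. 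The term $W_{2,2}^{n+1}$ is the sensitive one: here I would perform a second-order Taylor expansion of $e^{-\phi^{n+1}_i}$ around $\phi^{n+1}_{i+1}$, so that after summation by periodicity the first-order content cancels and only the quadratic remainder $e^{-\zeta^{n+1}_i}(\phi^{n+1}_{i+1}-\phi^{n+1}_i)^2$ survives. Converting this into $\Delta x \cdot \varepsilon^2|\delta\phi^{n+1}_{i+\frac12}|^2/\varepsilon^2$ and invoking the maximum principle \eqref{maximum_principle} to bound $e^{-\zeta^{n+1}_i} \le \|\rho^{n+1}\|_{L^\infty(\T)}$ yields the second piece of \eqref{estimate_W2}.

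The main obstacle I anticipate is $W_{2,2}^{n+1}$: a naive bound using only the Lipschitz character of $s\mapsto e^{-s}$ gives $|\delta\phi^{n+1}_{i+\frac12}|^2$ without the factor $\Delta x/\varepsilon^2$, which is too weak for the CFL argument in Theorem \ref{main_result_2}. Exploiting the cancellation through Taylor expansion to the \emph{second} order, together with the $L^\infty$ control on $e^{-\phi^{n+1}}$ coming from the discrete maximum principle, is essential to produce the correct $\|\rho^{n+1}\|_{L^\infty(\T)}\Delta x/\varepsilon^2$ scaling that ultimately drives the requirement $\Delta x = \mathcal{O}(\varepsilon)$ in the CFL condition \eqref{CFL}.
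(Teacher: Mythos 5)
Your proposal is correct and follows essentially the same route as the paper: the same split of $\tilde{\rho}^{n+1}$ via $\hat{g}$ into $W_1^{n+1}+W_2^{n+1}$, the same substitution of the discrete Poisson equation into each piece, the same integration-by-parts/Young treatment of the $\varepsilon^2$ terms, and the same second-order Taylor expansion with periodic cancellation plus the discrete maximum principle for $W_{2,2}^{n+1}$. You also correctly single out the cancellation in $W_{2,2}^{n+1}$ as the step that produces the crucial $\Delta x/\varepsilon^2$ scaling, which is exactly where the paper's argument hinges.
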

We are ready to prove Theorem \ref{main_result_2}.
\begin{proof}
Combining the estimate of Lemma \ref{control_W}, with the evolution of the modulated energy  \eqref{mod_energy_n+1}, we obtain the following following closed estimate for the modulated energy: 

\begin{align}
    &\forall n \in \lbrace 0,\dots,N_{T}-1 \rbrace, \quad {\cal E}(\rho^{n+1},u^{n+1},\phi^{n+1} | \bar{u},\bar{\phi}) \leq {\cal E}(\rho^{n},u^{n},\phi^{n}| \bar{u},\bar{\phi}) \label{estimate_E_n+1} \\
    &+ | \bar{u} | \frac{\Delta t}{\Delta x} \Big( 8  \textnormal{Lip}(g) + 4  + \frac{2 \Delta x^2}{\varepsilon^2} \| \rho^{n+1} \|_{L^{\infty}(\T)} \Big){\cal E}(\rho^{n+1},u^{n+1},\phi^{n+1} | \bar{u}, \bar{\phi})\nonumber \\
    &- \Delta t \tau^{n+1} + \frac{\Delta t |\bar{u}|\textnormal{ Lip(g)} }{\Delta x} \sum_{i=0}^{N} \Big | (\phi_{i+1}^{n} - \phi_{i}^{n}) (e^{\phi_{i+1}^{n}} - e^{-\phi_{i}^{n}} ) \Big | \Delta x.\nonumber
\end{align}
Now remark that in $\Delta t \tau^{n+1}$ \eqref{tau_n} we have the term
$$
\frac{\Delta t g(0)}{\Delta x} \sum_{i=0}^{N} \big \vert (e^{-\phi_{i}^{n+1}} - e^{-\phi_{i-1}^{n+1}})(\phi_{i}^{n+1}-\phi_{i-1}^{n+1}) \big \vert \Delta x.
$$
So we see that the last term in \eqref{estimate_E_n+1} can be absorbed by $-\Delta t \tau^{n+1}$ if $\vert \bar{u} \vert \leq \frac{g(0)}{\textnormal{Lip}(g)}.$ The case $\bar{u} = 0$ is trivial. It completes the proof of Theorem \ref{main_result_2}.
\end{proof}

\section{ Numerical experiment}\label{sec:numerical_experiment}
This section is dedicated to some numerical experiments. First, we describe how we solve the non linear scheme. Then, a test illustrating the theorems proven above (namely Theorem \ref{main_result_1} and Theorem \ref{main_result_2}) is performed and we explore the accuracy of our scheme. Finally, a test taken from \cite{fiveb, degond_b} is considered. 

\subsection{Description of the non linear solver}
Since we shall use a Newton method, we assume that the function $g$ verifies, in addition to \eqref{ass_1_g}-\eqref{ass_3_g},  $g \in \mathrm{C}^{1}(\R)$ and that it is twice differentiable near the origin.
Suppose now that for a fixed integer $n \in \lbrace 0,\dots,N_{T}-1\rbrace$ we have constructed the solution $(\rho^{n},u^{n},\phi^{n})\in X(\mathcal{T}) \times X(\mathcal{T}^{\star}) \times X(\mathcal{T})$. We seek a solution of the scheme \eqref{EPBD} at step $n+1$ as a fixed point of a certain map just exactly as in the proof of existence.
More precisely, we look for $u^{n+1} \in X(\mathcal{T}^{\star})$ which solves
$$
\textbf{T}^{n}(u^{n+1}) = u^{n+1}
$$
where $\textbf{T}^{n} : X(\mathcal{T}^{\star}) \longrightarrow X(\mathcal{T}^{*})$
is defined for $u \in X(\mathcal{T}^{\star})$ in three steps:
\begin{itemize}
    \item{ Step 1: we solve the linear continuity equation of unknown $\bar{\rho} \equiv \bar{\rho}(u)$ which solves \eqref{def:barho}.}
    \item{ Step 2: we solve the non linear Poisson equation of unknown $\varphi \equiv\varphi(\bar{\rho}) \in X(\mathcal{T})$ which  verifies for $i \in \lbrace 0,...,N\rbrace:$}
    \begin{equation*}
    \varepsilon^2 (\Delta \varphi)_{i} + e^{-\varphi_{i}} = \bar{\rho}_{i}.
    \end{equation*}
     We use a  Newton-method with an error bound fixed to the zero machine which is in our case $10^{-15}.$
     \item{ Step 3: we solve the non linear momentum equation for $v \equiv v(\bar{\rho},u) \in X(\mathcal{T}^{\star})$ which verifies for $i \in \lbrace 0,...,N\rbrace:$}
     \begin{align} 
\label{taratata}
\frac{ \overline{\rho}_{i+\frac{1}{2}} v_{i+\frac{1}{2}} - \rho_{i+\frac{1}{2}}^{n} u_{i+\frac{1}{2}}^{n} }{\Delta t} + \frac{{\cal Q}_{i+1}(u) v_{i+1}-{\cal Q}_{i}(u) v_{i}}{\Delta x} = \bar{\tilde{\rho}}(v)_{i+\frac{1}{2}} (\delta \varphi)_{i+\frac{1}{2}},
\end{align}
where
\begin{align}
\overline{\rho}_{i+\frac{1}{2}} = \frac{ \overline{\rho}_{i} + \overline{\rho}_{i+1}}{2}, \quad Q_{i}(u) = \frac{ \mathcal{F}_{i+\frac{1}{2}}(u) +\mathcal{F}_{i-\frac{1}{2}}(u) }{2}, 
\quad v_{i} = \begin{cases}
        v_{i-\frac{1}{2}} \textnormal{ if } \mathcal{Q}_{i}(u) \geq 0,\\
        v_{i+\frac{1}{2}} \textnormal{ if }\mathcal{Q}_{i}(u) < 0,
    \end{cases}
\end{align}
and
\begin{equation} \label{tilde_rrro_2}
\displaystyle \bar{\tilde{\rho}}(v)_{i+\frac{1}{2}} =
\begin{cases}
\frac{ G(\overline{\rho}_{i},\overline{\rho}_{i+1},v_{i+\frac{1}{2}}) - G(\overline{\rho}_{i},\overline{\rho}_{i+1},0) }{ v_{i+\frac{1}{2}} }  \;\; \textnormal{ if } v_{i+\frac{1}{2}} \neq 0,\\
        \bar{\rho}_{i+1} -(\bar{\rho}_{i+1}-\bar{\rho}_{i})g'(0) \;\;\;\;\;\;\;\textnormal{ if } v_{i+\frac{1}{2}} = 0.
    \end{cases}
\end{equation}
We also use a  Newton-method with an error bound fixed to the zero machine.
\end{itemize}
Once these three steps are accomplished we consider that we have computed (an approximation of) $\textbf{T}^{n}(u)$ for a given $u \in X(\mathcal{T}^{\star}).$
We thus use a Picard-iteration scheme which consists in the sequence $(u_{k}^{n})_{k \in \N}$ defined by induction as follows:
\begin{equation} \label{picard-iterates}
\begin{cases}
u_{0}^{n} = u^{n},\\
u_{k+1}^{n} = \textbf{T}^{n}(u_{k}^{n}), \quad k \in \N.
\end{cases}
\end{equation}
If the sequence $(u_{k}^{n})_{k \in \N}$ converges to some $u_{\star}^{n} \in X(\mathcal{T}^{\star})$ then, since $\textbf{T}^{n}$ is a continuous map, the limit verifies $\textbf{T}^{n}(u_{\star}^{n}) = u_{\star}^{n}$ which exactly means that $u_{\star}^{n}$ is a solution of the non linear system \eqref{EPBD}. Our stopping criterion for the algorithm is (provided $u_{k}^{n} \neq 0$ for each $k$ and the sequence converges to some $u_{\star}^{n} \neq 0$):
\begin{itemize}
    \item Compute:
$$N_{\star} := \inf \Big \lbrace k \in \N \: : \frac{\|u_{k+1}^{n} -u_{k}^{n}\|_{L^{\infty}(\T)}}{\|u_{k}^{n}\|_{L^{\infty}(\T)}} \leq 10^{-7} \Big\rbrace.$$
Since the sequence is assumed to converge towards a non zero limit this number is well-defined.
\item Update the approximate solution by setting:  \begin{align}
 & u^{n+1} = u_{N_{\star}}^{n},\\ 
  &\rho^{n+1} = \bar{\rho}(u_{N_{\star}}),\\
  &\phi^{n+1} = \varphi(\rho^{n+1}).
\end{align}
Of course, this is an approximation of a fixed point up to the threshold error. 
\end{itemize}
In the following numerical experiment we fix the function $g$ to be given by
\begin{equation}\label{def_g_num}
g(u) = \begin{cases}
 u \;\;\;\;\;\;\;\;\;\;\;\;\textnormal{ if } u \geq \Delta x,\\
 \frac{(u+\Delta x)^2}{4 \Delta x} \;\; \textnormal{ if } -\Delta x < u < \Delta x,\\
 0 \;\;\;\;\;\;\;\;\;\;\;\; \;\textnormal{ if } u \leq -\Delta x.
\end{cases}
\end{equation}

\subsection{Non linear stability around constant states}
We consider a constant state of the form
\begin{align}
    \begin{cases}
        \bar{ u } \in \R,\\
        \bar{\phi} = 0,\\
        \bar{\rho} = e^{-\bar{\phi}} = 1.
    \end{cases}
\end{align}
This constant state is clearly a stationary solution of both  \eqref{EPB} and \eqref{EPB0}. 
We consider a fluctuation around the constant state $(\bar{\rho},\bar{u})$ in the form 
\begin{align}
    \rho_{\varepsilon}^{\textnormal{ini}}(x) - \bar{\rho}=  \frac{\varepsilon^{s}}{2} \sin\Big(2\pi x \lfloor \varepsilon^{-1} \rfloor \Big), \quad u_{\varepsilon}^{\textnormal{ini}}(x)  - \bar{u} =  \varepsilon \sin(2\pi x), \quad x \in [0,1], \label{initial_fluctuations}
\end{align}
where $\varepsilon \in (0,1]$ and $s \geq 0.$ In particular, we see that the fluctuation around $\bar{\rho}$ oscillates at the spatial scale $\varepsilon$. Observe besides  that for $0 < s' < s$ we have
\begin{align} \label{quasi_neutral_initialy}
\| \rho_{\varepsilon}^{\textnormal{ini}} - \bar{\rho} \|_{H^{s'}(\T)} \longrightarrow 0 \text{  as  } \varepsilon \longrightarrow 0.
\end{align}
In \cite{puguo}, it is shown
that provided $s$ is large enough \eqref{quasi_neutral_initialy} is propagated on $[0,T]$ for a Sobolev exponent which is smaller than $s'$. The modulated energy estimate \eqref{modulated_energy_estimate} enables to show the convergence in $L^{2}(\T)$ on $[0,T]$ provided the initial data is such that $\mathcal{E}(0) \longrightarrow 0$ as $\varepsilon \rightarrow 0$ (see \cite{Han-Kwan01082011}). Its discrete analogue  is \eqref{dis_mod_energy_estimate} for constant states with $\bar{u} = 0.$ The initial data is discretized in a finite volume manner, that is  
\begin{align}
    &\rho^{0}_{i} - \bar{\rho}  =  \frac{\varepsilon^{s}}{4 \pi \Delta x \lfloor \varepsilon ^{-1} \rfloor}\big( \cos\Big( 2\pi x_{i-\frac{1}{2}}\lfloor \varepsilon^{-1} \rfloor \Big) - \cos\Big( 2\pi x_{i+\frac{1}{2}}\lfloor \varepsilon^{-1} \rfloor \Big) \Big), \quad i \in \lbrace 0,..,N\rbrace \label{ini_1}\\
   & u^{0}_{i} - \bar{u} = \frac{\varepsilon}{2\pi \Delta x} \big( \cos(2\pi x_{i-\frac{1}{2}}) - \cos(2\pi x_{i+\frac{1}{2}})\big), \quad i \in \lbrace 0,..,N\rbrace.\label{ini_2} 
\end{align}
The initial potential $- \phi^{0}$ satisfies the discrete nonlinear Poisson equation: 
\begin{align*}
    \varepsilon^2 (\Delta \phi^{0})_{i} + e^{-\phi^{0}_{i}} = \rho_{i}^{0}, \quad i \in \lbrace 0,...,N\rbrace.
\end{align*}
\subsubsection{ Convergence as $\varepsilon \rightarrow 0$ with a fixed mesh-size}

\paragraph{ a) A well-prepared data on a coarse mesh.}
The mesh size is $\Delta x = 10^{-2}$ and the time step is $\Delta t = \frac{1}{2}\Delta x.$ The final time is $T = 1000  \Delta t.$  
In Table \ref{tab:mod_energy_table_1}, we report the values of the modulated energy at initial and final time for different values of $\varepsilon$ for a initial data of the form \eqref{ini_1}-\eqref{ini_2} with $s =1$.
\begin{table}[h]
    \centering
    \begin{tabular}{|c|c|c|c|}
        \hline
        $\varepsilon$ & $\mathcal{E}(\rho^{N_{T}},u^{N_{T}},\phi^{N_{T}}| \bar{u}, \bar{\phi})$&  $\mathcal{E}(\rho^{0},u^{0},\phi^{0}| \bar{u}, \bar{\phi})$\\ \hline
        0.1 &  0.00147172  &  0.0225411 \\ \hline
        0.01 &  1.61869e-05  &  0.000224353  \\ \hline
        0.001 &  1.61757e-07  &  2.24352e-06 \\ \hline
        0.0001 &  1.61787e-09  &  2.24348e-08 \\ \hline
    \end{tabular}
    \caption{Modulated energy at final and initial time for different values of $\varepsilon$ for an initial data of the form \eqref{ini_1}-\eqref{ini_2} with $s=1.$}
    \label{tab:mod_energy_table_1}
\end{table}

We observe that whatever the value of $\varepsilon$ is, the modulated energy at final time is lower than the modulated energy at initial time. It is an expected behavior of our scheme. Moreover, we see that when $\varepsilon$ decreases to zero, the modulated energy also decreases towards zero. We measure a convergence rate towards zero in $\varepsilon$ which is $\mathcal{O}(\varepsilon^{2})$. It is exactly the same rate as the rate of decrease towards zero of the modulated energy at initial time. During the simulation,  we have checked the total energy decay, the mass conservation and the conservation of positivity of the density. Note that the time step and the mesh size are fixed and completely independent of $\varepsilon$. These results are in good agreement with Theorem \ref{main_result_1} and the item a) of Theorem \ref{main_result_2}.
We have performed the same test with $\bar{u} \in \lbrace -4,-2,2,4\rbrace$ and we have obtained comparable results. These results illustrate the unconditional stability of our scheme. However, the results  must be interpreted with care since it is only a rough illustration of the convergence as $\varepsilon \rightarrow 0$ on a coarse mesh. We do not claim that our scheme is accurate when $\varepsilon \rightarrow 0.$
\paragraph{b) Evaluation of the numerical dissipation rate when $\varepsilon$ is fixed.}
We quantify the the numerical dissipation when $\varepsilon$ is fixed but $\Delta x$ and $\Delta t$ tend both to zero. We expect that for smooth fluctuations, the numerical dissipation rate should tend towards zero since in the continuous case the energy of smooth solutions is conserved. 
The numerical parameters are: $\varepsilon = 10^{-1}$, $\Delta t = \frac{1}{2} \Delta x.$ The final time is $T = 0.2$.
The initial data is of the form \eqref{ini_1}-\eqref{ini_2} with $s = 1$. In Table \ref{tab:mod_energy_dx}, we report the numerical dissipation rate defined by
\begin{equation}
    \tau(\Delta x) = \frac{\log(\mathcal{E}(\rho^{N_{T}},u^{N_{T}},\phi^{N_{T}}| \bar{u}, \bar{\phi})) - \log(\mathcal{E}(\rho^{0},u^{0},\phi^{0}| \bar{u}, \bar{\phi}))}{T}.
\end{equation}
\begin{table}[h]
    \centering
    \begin{tabular}{|c|c|c|c|}
        \hline
        $\Delta x$ & $\tau(\Delta x)$\\ \hline
        0.01 &  -0.796237   \\ \hline
        0.005 &  -0.416073   \\ \hline
        0.0025 &  -0.222488 \\ \hline
    \end{tabular}
    \caption{Numerical dissipation rate for $\varepsilon = 10^{-1}$ for three values of $\Delta x$.}
    \label{tab:mod_energy_dx}
\end{table}
We see that with the CFL condition $\Delta t = \frac{1}{2} \Delta x$ the numerical dissipation rate is of order one in $\Delta x$. In Figure \ref{fig:mod_energy_dx}, we still consider $\Delta x \in \lbrace 0.01,0.005,0.0025 \rbrace$ and plot for each $\Delta x$ the time evolution of the modulated energy on $[0,T]$ (left part) and the density $\rho(T, \cdot)$ (right part). 
\begin{figure}
\centering
\begin{tabular}{cc}
\includegraphics[width=0.4\textwidth]{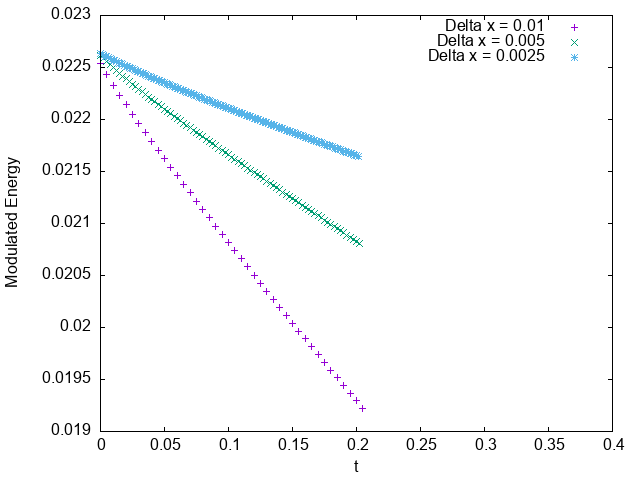} & \includegraphics[width=0.4\textwidth]{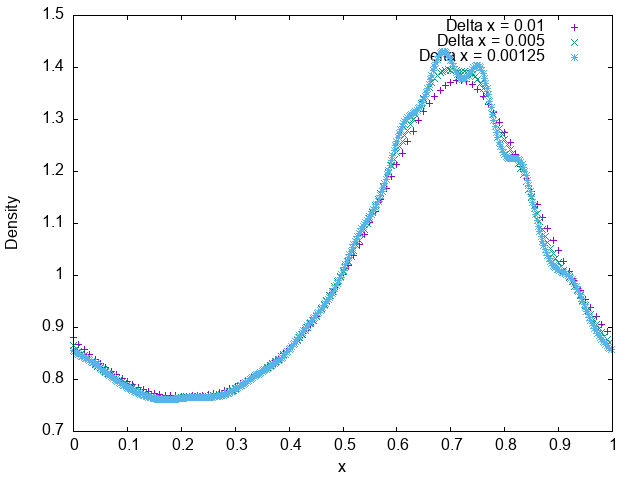}
\end{tabular}
    \caption{Left: time evolution of the modulated energy on $[0,T]$ with $\varepsilon = 10^{-1}$ for three values of $\Delta x$. Right: density $\rho(T=0.2,\cdot)$  with $\varepsilon = 10^{-1}$ for three values of $\Delta x$.}
    \label{fig:mod_energy_dx}
\end{figure}


\paragraph{c) A well-prepared data on a fine mesh.}
Here, we still consider the initial data of the form \eqref{ini_1}-\eqref{ini_2} with $s =1$. The mesh size is $\Delta x = 10^{-3}$ and the time step is $\Delta t = \frac{1}{2} \Delta x$. The final time is $T = 20 \Delta t$.  We report in Table \ref{tab:mod_energy_table_3} the values of the modulated energy at initial and final time for different values of $\varepsilon$. We expect the scheme to converge in $L^{\infty}([0;T]; L^{2}(\T))$ but a priori not in $L^{\infty}([0;T];H^{1}(\T))$  since there is a loss of one power of $\varepsilon$ when we differentiate \eqref{initial_fluctuations}. 
\begin{table}[h]
    \centering
    \begin{tabular}{|c|c|c|c|}
        \hline
        $\varepsilon$ & $\mathcal{E}(\rho^{N_{T}},u^{N_{T}},\phi^{N_{T}}| \bar{u}, \bar{\phi})$&  $\mathcal{E}(\rho^{0},u^{0},\phi^{0}| \bar{u}, \bar{\phi})$\\ \hline
        0.1 &  0.0225142  &  0.022534 \\ \hline
        0.05 &  0.00562696  &  0.00563343 \\ \hline
        0.025 &  0.00140584  &  0.00140829 \\  \hline
        0.0125 & 0.000353224  &  0.00035338 \\ \hline 
    \end{tabular}
    \caption{Modulated energy at final and initial time for different values of $\varepsilon$ for an initial data of the form \eqref{ini_1}-\eqref{ini_2} with $s=1$.}
    \label{tab:mod_energy_table_3}
\end{table}
We see that the modulated energy still decays as $\varepsilon$ decays towards zero. The order of convergence is $\mathcal{O}(\varepsilon^{2})$. 
In Figures \ref{fig:density-eps0.1}, 
we represent the initial density and the final density on the refined mesh for the values of $\varepsilon$ given in Table \ref{tab:mod_energy_table_3}.  We see that the spatial oscillations are still present and it seems that there is also an oscillatory behavior in time. To investigate the time oscillations, we plot in Figure \ref{rhomrhob} the time evolution of $\varepsilon^{-2}\|\rho(t, \cdot)-\bar{\rho}\|_{L^2(\T)}$ (the rescaling by $\varepsilon^2$ is needed to get comparable amplitudes). The oscillation period does not seem to depend strongly on $\varepsilon$. This behavior has already been observed in \cite{degond_b} thanks to a linear stability analysis. The main reason is that the zero order term in the Poisson equation tends to stabilize the high spatial frequency mode for the electric potential.

\begin{figure}
\centering
\begin{tabular}{ccc}
\includegraphics[width=0.32\textwidth]{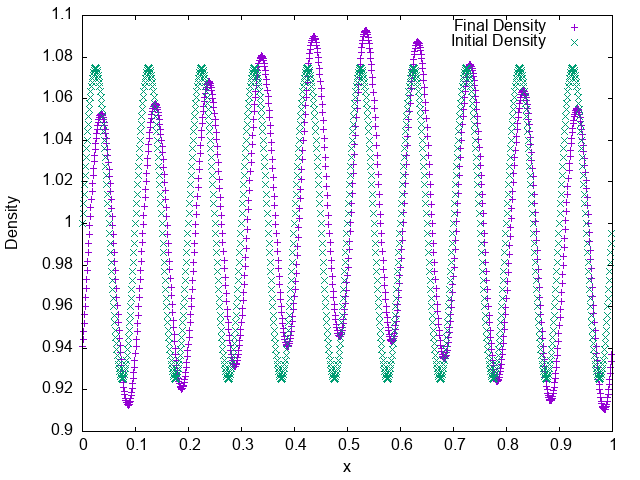}& \!\!\!\!\!\!\!\!\includegraphics[width=0.32\textwidth]{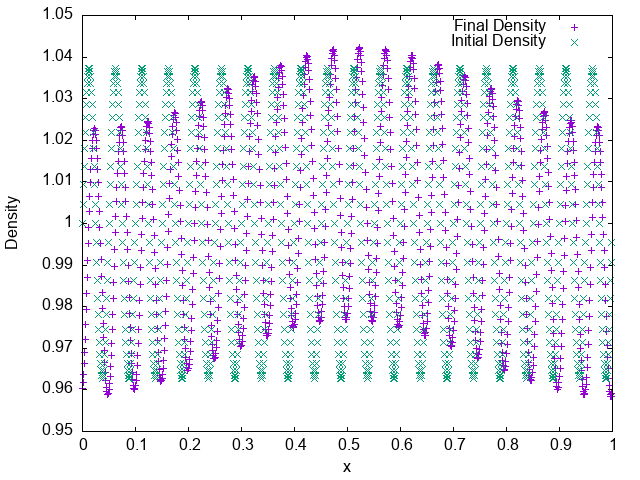} &   \!\!\!\!\!\!\!\! \includegraphics[width=0.32\textwidth]{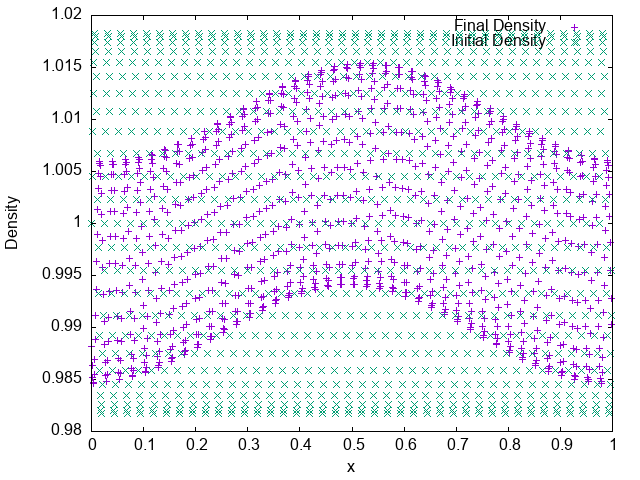}
\end{tabular}
    \caption{Plots of the initial density $\rho(0, \cdot)$ and final density $\rho(20\Delta t, \cdot)$ for $\varepsilon \in \lbrace 0.1, 0.05, 0.025\rbrace$ on the fine mesh: $\Delta x=10^{-3}$ and $\Delta t=\frac{1}{2}\Delta x$. }
    \label{fig:density-eps0.1}
\end{figure}

\begin{figure}
\centering
\includegraphics[width=0.6\textwidth]{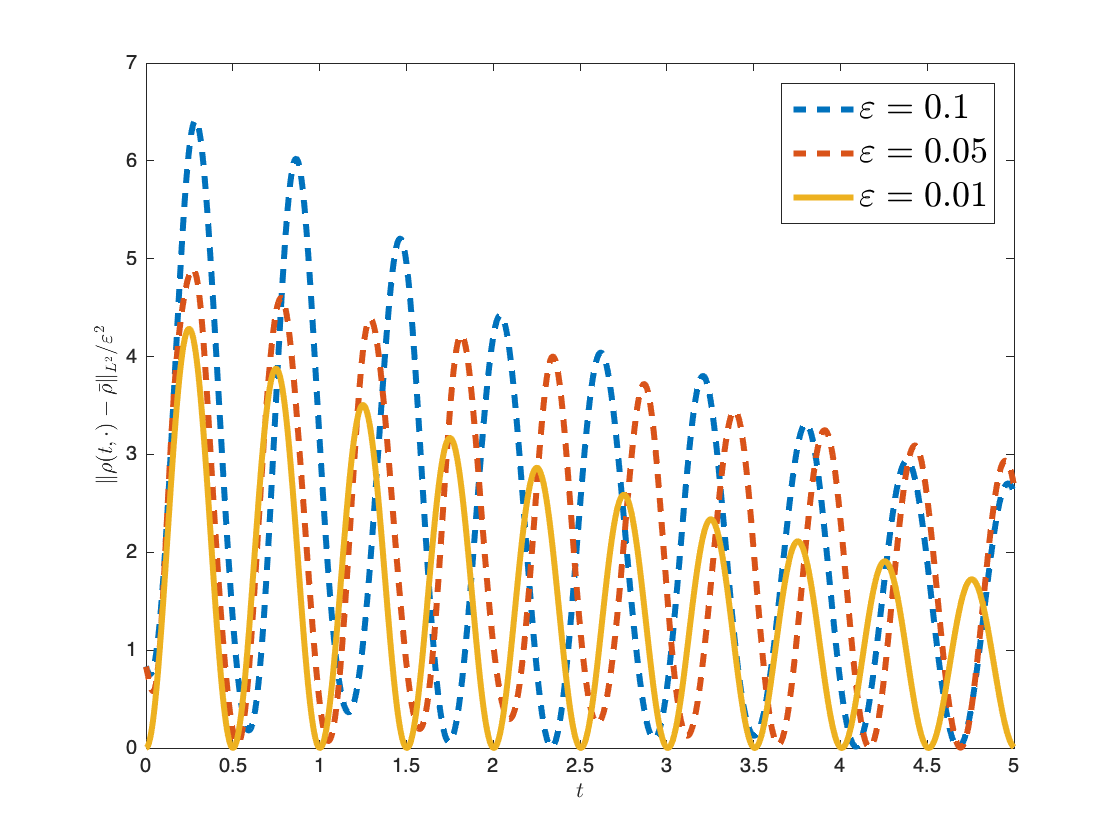}
    \caption{Time evolution of $t\in [0, 5]\mapsto \|\rho(t, \cdot)-\bar{\rho}\|_{L^2}/\varepsilon^2$ for $\varepsilon \in \lbrace 0.01, 0.05, 0.1\rbrace$.  $\Delta x=1/200$ and $\Delta t=\frac{1}{2}\Delta x$ }
    \label{rhomrhob}
    \end{figure}
\subsection{ An ill-prepared data on a fine mesh.}
We now consider the following initial data 
\begin{align}
    \rho_{\varepsilon}^{\textnormal{ini}}(x) - \bar{\rho}=  \frac{1}{2} \sin\Big(2\pi x \lfloor \varepsilon^{-1} \rfloor \Big), \quad u_{\varepsilon}^{\textnormal{ini}}(x)  - \bar{u} =  \sin(2\pi x), \quad x \in [0,1], \label{initial_fluctuations_2}
\end{align}
which is actually not well prepared. The mesh size is $\Delta x = 10^{-3}$, the time step is $\Delta t = \frac{1}{2} \Delta x$ and the final time is $T = 20 \Delta t.$ 
We report on Table \ref{tab:mod_energy_table_4} the values of the modulated energy at initial and final time for three values of $\varepsilon$. 
\begin{table}[h]
    \centering
    \begin{tabular}{|c|c|c|c|}
        \hline
        $\varepsilon$ & $\mathcal{E}(\rho^{N_{T}},u^{N_{T}},\phi^{N_{T}}| \bar{u}, \bar{\phi})$&  $\mathcal{E}(\rho^{0},u^{0},\phi^{0}| \bar{u}, \bar{\phi})$\\ \hline
        0.1 &  2.24793  &  2.2534 \\ \hline
        0.05 &  2.24739  &  2.25337 \\ \hline
        0.025 &  2.2465  &  2.25326 \\ \hline
    \end{tabular}
    \caption{Modulated energy at final and initial time for different values of $\varepsilon$ for an ill prepared initial data of the form \eqref{initial_fluctuations_2}.}
    \label{tab:mod_energy_table_4}
\end{table}
In figures \ref{fig:density_ill-eps0.1},  
we represent the density at initial and final time for three different values of $\varepsilon.$

\begin{figure}
\centering
\begin{tabular}{ccc}
\includegraphics[width=0.3\textwidth]{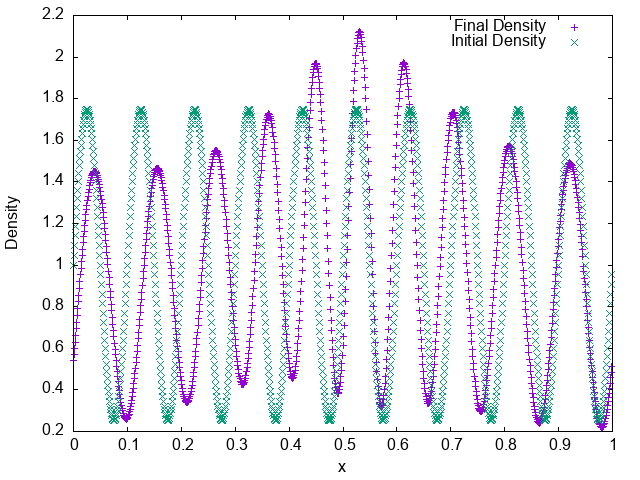} & \!\!\!\!\!\!\!\!\includegraphics[width=0.3\textwidth]{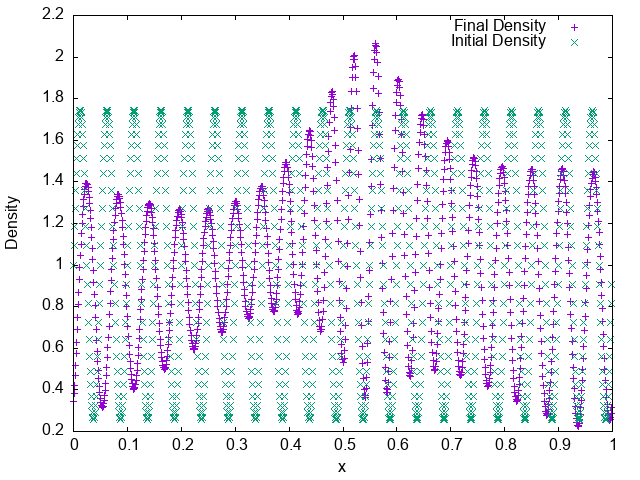} & \!\!\!\!\!\!\!\!\includegraphics[width=0.3\textwidth]{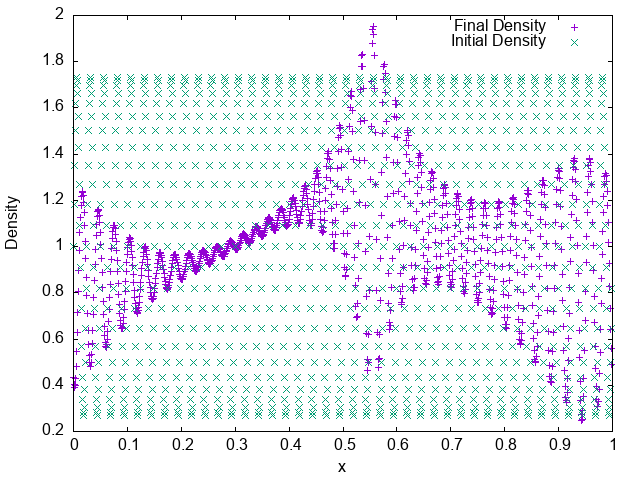} 
\end{tabular}
 \caption{Plots of the initial density $\rho(0, x)$ and final density $\rho(20\Delta t, x)$ for $\varepsilon \in \lbrace 0.1, 0.05, 0.025\rbrace$ on a fine mesh: $\Delta x=10^{-3}$ and $\Delta t=\frac{1}{2}\Delta x$. }
    \label{fig:density_ill-eps0.1}
\end{figure}


\subsection{ Five-branch solution}
This test case is inspired from \cite{fiveb, degond_b, arun} but here we consider (because of the periodic boundary conditions), the following initial condition 
\begin{eqnarray*}
\rho^{\textnormal{ini}}(x) &=& 
\begin{cases}
0.1 + \exp\Big(\frac{0.1}{(x-3\pi/4)(x-5\pi/4)}\Big)  \mbox{ if  $\frac{3\pi}{4} < x <  \frac{5\pi}{4},$ }\\
0.1 \mbox{ if $x \in [0,2\pi] \setminus (\frac{3\pi}{4}, \frac{5\pi}{4}).$}
\end{cases}
\\
    u^{\textnormal{ini}}(x) &=& \sin^3(x), \quad x \in [0,2\pi].
\end{eqnarray*}
Note that $\rho^{\textnormal{ini}} \in \mathcal{C}^{\infty}_{c}((0,2\pi))$ so it has a smooth $2\pi$-periodic extension. The electric potential $\phi^{\textnormal{ini}}$ is computed from the nonlinear Poisson equation. 
The space domain is $2\pi \T \equiv  [0, 2\pi)$ discretized with a mesh size $\Delta x=2\pi/400$ and the time step is $\Delta t= \Delta x/2$. We set the final time $T = 0.5$. 
We run the method presented before with the limit scheme ($\varepsilon = 0$) which consists in replacing the Poisson equation by the algebraic relation $\phi^{n+1} = -\log(\rho^{n+1})$. Moreover, we also present some results obtained by a numerical method for the limit model \eqref{EPB0} based on the same spatial and temporal discretization. The main difference lies in the fact the discretization we use for \eqref{EPB0} is conservative which is not the case for the limit scheme ($\varepsilon = 0$).  The numerical parameters are the same for the three solvers. 

In Figures \ref{fiveb_epsm4} and \ref{fiveb},  
we plot the space dependency of the density and velocity at time $t=0.5$ for $\varepsilon \in \lbrace 10^{-4}, 10^{-2}, 0\rbrace$. The case $\varepsilon = 0$ corresponds to the limit scheme. 
One can observe the scheme for $\varepsilon=10^{-4}$ and $\varepsilon=0$ are almost indistinguishible whereas oscillations are present for $\varepsilon=10^{-2}$.  
In Figure \ref{fiveb}, the density is plotted at time $t=0.5$ for $\varepsilon \in \lbrace 10^{-4}, 0\rbrace$ as before but we also add the result obtained by the scheme for \eqref{EPB0}. In the region where the solution is smooth, 
the three curves are very similar whereas some differences can be observed around the discontinuity $x\approx 1.9$. Indeed, the propagation of the discontinuity seems to be different due to the non conservative treatment of the 
term $\rho\partial_x\phi$ (see the inset in Figure \ref{fiveb}).

\begin{figure}
\centering
\includegraphics[width=0.4\textwidth]{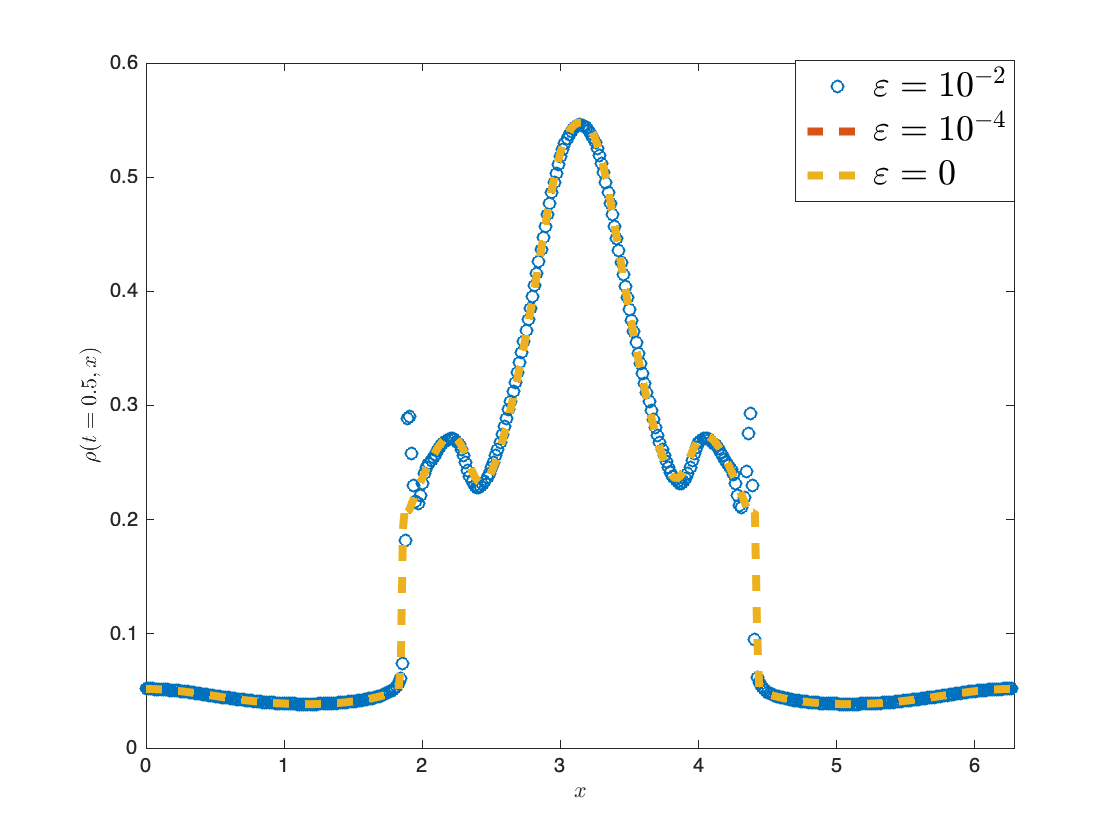}
\includegraphics[width=0.4\textwidth]{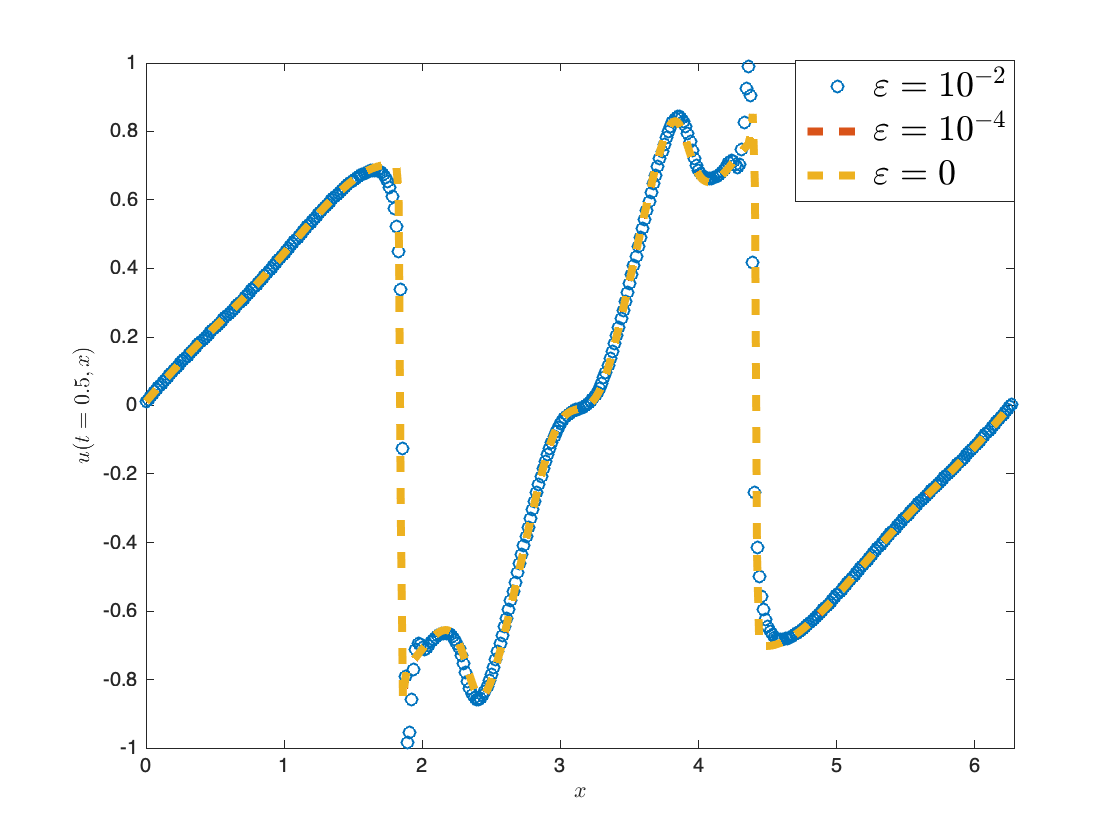}
    \caption{Five-branch test: comparison of the scheme for $\varepsilon=10^{-2}, 10^{-4}$ and the asymptotic scheme $\varepsilon=0$. Left: density $\rho(t=0.5, \cdot)$. Right: velocity $u(t=0.5, \cdot)$. $\Delta x=2\pi/400$ and $\Delta t=\frac{1}{2}\Delta x$.} 
    \label{fiveb_epsm4}
\end{figure}

\begin{figure}
\centering
\includegraphics[width=0.6\textwidth]{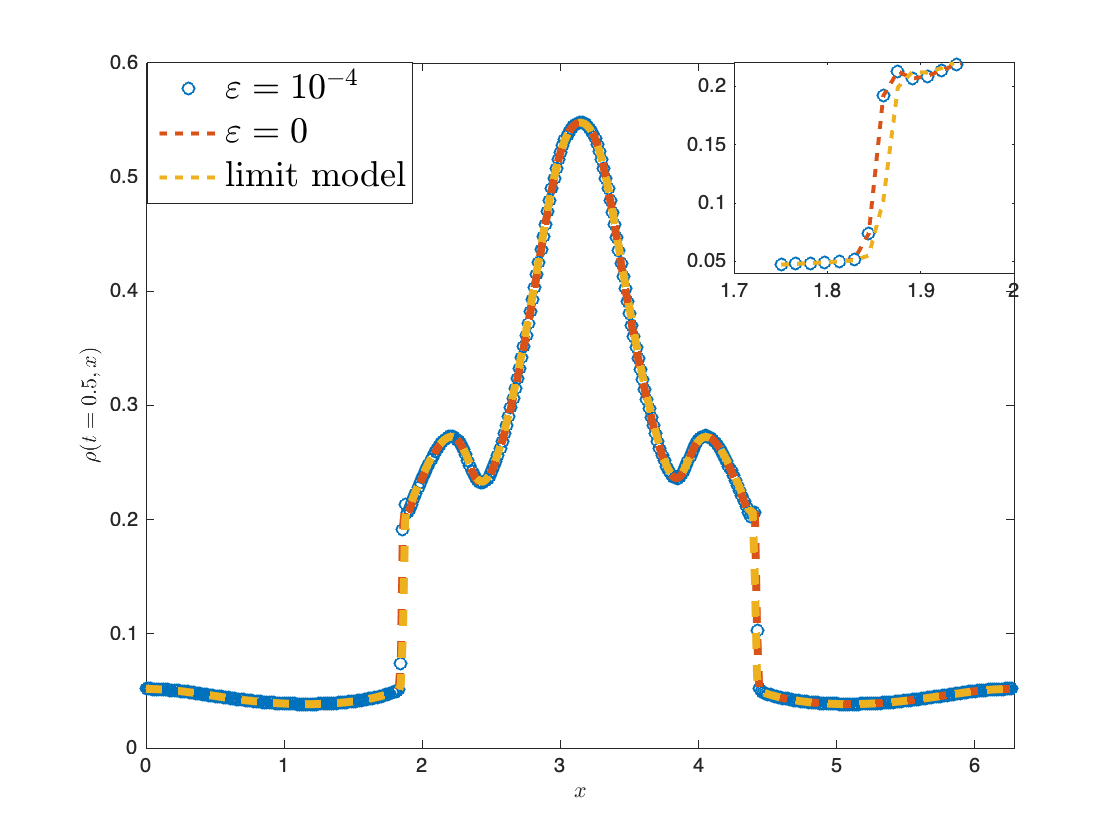}
 \caption{Five-branch test: comparison of the scheme for $\varepsilon=10^{-4}$, the asymptotic scheme $\varepsilon=0$ and the limit model. Density $\rho(t=0.5, x)$. The inset is a zoom around $x=1.9$. $\Delta x=2\pi/400$ and $\Delta t=\frac{1}{2}\Delta x$. }
    \label{fiveb}
\end{figure}

\paragraph{Acknowledgement.} MB acknowledges the financial support from Inria Rennes. He is also grateful to Institut de Recherche Mathématiques de Rennes and Ecole Normale Supérieure de Rennes for the material support during the elaboration of this work. This work has been carried out within the framework of the EUROfusion Consortium, funded by the European Union via the Euratom Research and Training Programme (grant agreement No 101052200 EUROfusion). Views and opinions expressed are however those of the authors only and do not necessarily reflect those of the European Union or the European Commission. The work has been supported by the French Federation for Magnetic Fusion Studies (FR-FCM). 
\bibliographystyle{abbrv}
\bibliography{biblio}

\end{document}